\newcommand{\x}{\scalebox{1.2}{$\chi$} }
\theoremstyle{plain}
\newtheorem{theorem}{Theorem}
\newtheorem{proposition}[theorem]{Proposition}
\newtheorem{lemma}[theorem]{Lemma}
\newtheorem{corollary}[theorem]{Corollary}
\newtheorem{conjecture}[theorem]{Conjecture}
\newtheorem{claim}[theorem]{Claim}
\theoremstyle{definition}
\newtheorem{definition}[theorem]{Definition}
\theoremstyle{remark}
\newtheorem{remark}[theorem]{Remark}
\newcommand{\N}{\mathbb N}
\newcommand{\Z}{\mathbb Z}
\newcommand{\R}{\mathbb R}
\newcommand{\D}{\mathbb D}
\newcommand{\C}{\mathbb C}
\newcommand{\RiemannSphere}{\widehat{\mathbb{C}}}
\newcommand{\dist}{\textup{dist}}
\newcommand{\area}{\textup{Area}}
\newcommand{\loc}{\mathrm{loc}}
\newcommand{\br}{\overline}
\newcommand{\diam}{\textup{diam}}
\newcommand{\length}{\textup{length}}
\DeclareMathOperator{\id}{id}
\DeclareMathOperator{\Mod}{\mathrm{Mod}}
\DeclareMathOperator{\inter}{\mathrm{int}}
\DeclareMathOperator{\clu}{Clu}
\renewcommand {\tilde} {\widetilde}
\numberwithin{equation}{section}
\numberwithin{theorem}{section}
\begin{document}

\title{Rigidity theorems for circle domains}

\date{\today}

\author[D.\ Ntalampekos]{Dimitrios Ntalampekos}
\address{Institute for Mathematical Sciences, Stony Brook University, Stony Brook, NY 11794, USA.}
\email{dimitrios.ntalampekos@stonybrook.edu}
\thanks{The first author was partially supported by NSF Grant DMS-1506099}
\author[M. Younsi]{Malik Younsi}
\thanks{The second author was supported by NSF Grant DMS-1758295}
\address{Department of Mathematics, University of Hawaii Manoa, Honolulu, HI 96822, USA.}
\email{malik.younsi@gmail.com}

\keywords{circle domains, Koebe's conjecture, rigidity, removability, conformal, quasiconformal, quasihyperbolic distance.}
\subjclass[2010]{Primary 30C20, 30C35; Secondary 30C62.}

\begin{abstract}
A circle domain $\Omega$ in the Riemann sphere is conformally rigid if every conformal map from $\Omega$ onto another circle domain is the restriction of a M\"{o}bius transformation. We show that circle domains satisfying a certain quasihyperbolic condition, which was considered by Jones and Smirnov \cite{JOS}, are conformally rigid. In particular, H\"{o}lder circle domains and John circle domains are all conformally rigid. This provides new evidence for a conjecture of He and Schramm relating rigidity and conformal removability.
\end{abstract}

\maketitle

\section{Introduction}
\label{sec1}

A domain $\Omega$ in the Riemann sphere $\RiemannSphere$ is called a \textit{circle domain} if every connected component of its boundary is either a {geometric circle} or a point. Such domains are well-known to be of significant importance in complex analysis and related areas, mainly because they are expected to represent every planar domain, up to conformal equivalence. This is known as Koebe's Kreisnormierungsproblem.

\begin{conjecture}[Koebe \cite{KOE2}]
\label{KoebeConjecture}
Any domain in $\RiemannSphere$ is conformally equivalent to a circle domain.
\end{conjecture}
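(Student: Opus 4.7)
Since Conjecture~\ref{KoebeConjecture} is Koebe's Kreisnormierungsproblem, which remains open in full generality, I can only sketch a plan of attack, indicating the known partial results and where the main difficulty lies. The natural strategy is to proceed by increasing complexity of the boundary: first the finitely connected case, then the countably connected case by exhaustion, and finally the general case via a further limiting argument. At each stage one produces a uniformizing conformal map $\varphi$ from the approximating domain onto a circle domain and then passes to a subsequential limit, verifying that the limit is again conformal with circle-domain image.

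For the finitely connected case I would argue by induction on the number $n$ of boundary components, the base case $n=1$ being the Riemann mapping theorem. The inductive step rounds off one additional boundary component at a time, for instance by lifting to a double via Schwarz reflection across an already-uniformized circle and invoking Riemann-type uniformization on the quotient, together with a normal families argument in the space of univalent maps with a fixed normalization. For the countably connected case, following the framework of He and Schramm, I would exhaust $\Omega$ by finitely connected subdomains $\Omega_k \nearrow \Omega$, uniformize each by the previous step, and extract a subsequential limit of the resulting maps. The essential analytic tool here is Schramm's transboundary extremal length, which supplies lower bounds on the conformal moduli of annuli separating small boundary components and thereby forbids the circles in the limit from collapsing or colliding.

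For the general case the same scheme breaks down, and this is the main obstacle I expect: uncountable families of boundary components need not admit any uniform modulus estimate, so circles in the uniformizations of the approximants may collide or collapse to points in the limit, and the limit map may fail to be injective or to have circle-domain image. A serious attack would therefore require either a new conformal invariant capable of controlling uncountably many boundary components simultaneously, or a direct variational characterization of the circle-domain uniformization as the minimizer of an appropriate functional. An alternative, indirect route, and one consistent with the theme of the present paper, is the He--Schramm program linking rigidity to conformal removability: if the boundary of a sufficiently regular circle domain is conformally removable, then the rigidity of that domain follows, so that the rigidity statements announced by the authors, once coupled with a removability theorem, would yield strong circumstantial evidence for the conjecture, though they do not bear directly on the existence question posed by Koebe.
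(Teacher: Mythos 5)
You correctly recognized that this statement is Koebe's Kreisnormierungsproblem, which is an open conjecture; the paper states it as such and offers no proof, only citing Koebe's finitely connected result \cite{KOE1} and the He--Schramm extension to countably many boundary components \cite{SCH1} as the principal partial results. Your survey of the landscape is accurate: the finitely connected case via induction (Koebe's original argument actually uses a Brouwer invariance-of-domain dimension count rather than a reflection-and-double construction, but both routes are standard), the countable case via exhaustion by finitely connected subdomains together with rigidity and a normal-families limit, and the breakdown at uncountably many boundary components where no uniform modulus control is available. You also correctly situate the present paper's contribution: Theorem \ref{mainthm1} addresses \emph{rigidity} (uniqueness) of circle domains under the quasihyperbolic condition, not \emph{existence} of a circle-domain uniformization, so it bears on the He--Schramm rigidity conjecture rather than directly on Koebe's conjecture. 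Since there is no proof in the paper to compare against, there is nothing further to check; your response is an honest and essentially correct appraisal of the state of the problem.
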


Koebe himself proved Conjecture \ref{KoebeConjecture} in the case of domains with finitely many boundary components \cite{KOE1}, using a dimension argument based on Brouwer's {inva\-riance-of-domain theorem}. The following generalization is undoubtedly one of the most important advances on Koebe's conjecture.

\begin{theorem}[He--Schramm \cite{SCH1}]
\label{HeSchrammTheorem}
Any domain in $\RiemannSphere$ with at most countably many boundary components is conformally equivalent to a circle domain.
\end{theorem}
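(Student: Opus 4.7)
The plan is to combine Koebe's finite-connectivity theorem with a transfinite exhaustion argument, the countability of the boundary components being essential for controlling the limiting procedure. The natural inductive parameter is the Cantor--Bendixson rank: the collection $\mathcal{E}$ of boundary components of $\Omega$, viewed with the Hausdorff metric as a subspace of the hyperspace of continua in $\RiemannSphere$, is compact; under the countability hypothesis it is a countable compact metric space, and so has a well-defined countable Cantor--Bendixson rank $\alpha$. I would attempt a transfinite induction on $\alpha$, whose base case (finitely many components) is exactly Koebe's theorem.

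For the inductive step, fix a basepoint $p \in \Omega$ and enumerate the components as $\mathcal{E} = \{K_j\}_{j \in \N}$. For each $n$ set $\Omega_n = \RiemannSphere \setminus (K_1 \cup \cdots \cup K_n)$; each $\Omega_n$ is finitely connected, so by Koebe it admits a conformal map $f_n : \Omega_n \to D_n$ onto a circle domain, which I would normalize by $f_n(p)=0$ and $f_n'(p) > 0$. Since $\Omega \subset \Omega_n$ for every $n$, a normal-families argument combined with the Koebe distortion estimate should yield a subsequential limit $f = \lim f_{n_k}$ that is univalent on $\Omega$, and a Hurwitz-type argument should identify its image as a domain whose ``recognized'' boundary components, namely those coming from $K_j$'s eventually appearing in the exhaustion, are round circles or points.

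The main obstacle, and the actual source of difficulty, is to prove that \emph{every} boundary component of $f(\Omega)$ is a circle or a point, i.e., that nonround limiting continua cannot arise through the collapse or clustering of infinitely many approximating components. This is where the Cantor--Bendixson hierarchy enters essentially: isolated components at each level can be handled by the previous inductive stage applied to a suitably ``filled-in'' subdomain, while the accumulation behavior at each level is controlled by conformal modulus estimates of the type later systematized through Schramm's transboundary extremal length. Finally, a uniqueness statement for normalized conformal maps between circle domains --- ultimately descending from Schwarz's lemma and from the three-point rigidity of M\"obius transformations --- is needed both to show that the subsequential limit $f$ is independent of the choice of subsequence and to close the induction. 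I would expect the passage from isolated components to accumulation points to be the hardest step, since exactly there the uncountable-connectivity case breaks down and one needs the Cantor--Bendixson countability in a genuinely non-negotiable way.
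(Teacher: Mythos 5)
This theorem is quoted in the paper from He--Schramm \cite{SCH1} and is not proved there, so there is no internal proof to compare against; I will assess your sketch against the known argument of \cite{SCH1}.

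Your sketch captures two genuine features of the He--Schramm proof --- the transfinite induction on the Cantor--Bendixson rank of the space of boundary components, and the approximation by finitely connected subdomains via Koebe's finite theorem --- but it leaves unaddressed exactly the two steps where the real content lies, and where the naive exhaustion/normal-families scheme is known to fail (indeed, if the scheme you outline worked as stated, it would not use countability at all and would resolve the uncountable case, which is open). First, the ``uniqueness statement for normalized conformal maps between circle domains'' that you invoke to pin down the subsequential limit and to close the induction is \emph{not} a consequence of Schwarz's lemma or of three-point M\"obius rigidity; for a countably connected circle domain it is precisely the rigidity theorem of \cite{SCH1}, proved there via their Incompatibility Theorem and a Schwarz--Pick--type argument that is itself a transfinite induction. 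Treating this as a lemma one can quote is circular at the level of the paper under discussion. Second, the step you correctly identify as the hard part --- showing that every boundary component of the limit domain is a round circle or a point, i.e.\ that clustering of infinitely many approximating circles cannot produce a non-round continuum --- is not supplied: the appeal to ``conformal modulus estimates of the type later systematized through Schramm's transboundary extremal length'' is anachronistic (transboundary extremal length appears in \cite{SCH3}, from 1995, and is not used in \cite{SCH1}) and does not amount to an argument. Finally, the actual engine of the He--Schramm existence proof is a fixed-point/degree argument generalizing the Brouwer invariance-of-domain step that Koebe used in the finite case (as the paper's own introduction emphasizes); your sketch replaces this by a bare normal-families limit, which by itself does not determine the image domain. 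So while the high-level architecture (Cantor--Bendixson induction, finite approximation) matches \cite{SCH1}, the parts you gesture at are precisely where the theorem is proved, and as written they constitute a gap rather than a proof.
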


As for the uncountable case, Conjecture \ref{KoebeConjecture} remains wide open, despite some partial results by Sibner \cite{SIB2}, He and Schramm (\cite{SCH4}, \cite{SCH3}), and Herron and Koskela \cite{HK}.

In Theorem \ref{HeSchrammTheorem}, the circle domain is actually unique up to a M\"{o}bius transformation, which follows from the fact, also proved in \cite{SCH1}, that every conformal map from a circle domain with at most countably many boundary components onto another circle domain is M\"{o}bius. This motivates the following definition.

\begin{definition}
A circle domain $\Omega$ in $\RiemannSphere$ is \textit{conformally rigid} if every conformal map from $\Omega$ onto another circle domain is the restriction of a M\"{o}bius transformation.
\end{definition}

The rigidity property in the countable case was in fact crucial in the proof of Theorem \ref{HeSchrammTheorem}. He and Schramm later extended rigidity to circle domains with boundaries of $\sigma$-finite length \cite{SCH2}, which is as far as we know the best rigidity result in the literature.

As observed in \cite{SCH2}, the notion of conformal rigidity appears to be closely related to conformal removability.

\begin{definition}
A compact set $E \subset \widehat\C$ is \textit{conformally removable} if every homeomorphism of $\RiemannSphere$ that is conformal outside $E$ is actually conformal everywhere, and hence is a M\"{o}bius transformation.
\end{definition}

Examples of removable sets include sets of $\sigma$-finite length and quasicircles. On the other hand, compact sets of positive area are never conformally removable. The converse is false. In fact, there exist removable sets of Hausdorff dimension two and non-removable sets of Hausdorff dimension one, and no geometric characterization of removability is known. See \cite[Section 5]{YOU} for more information, including applications to holomorphic dynamics and to conformal welding.

If $E \subset \mathbb{C}$ is a Cantor set, then $\Omega\coloneqq \RiemannSphere \setminus E$ is a circle domain, and it follows directly from the definitions that $\Omega$ is not conformally rigid if $E$ is not removable. In particular, complements of positive-area Cantor sets are never conformally rigid. The problem of determining exactly which circle domains are rigid remains open, and a solution should provide substantial insight into Koebe's conjecture. Motivated by this, He and Schramm proposed the following characterization.

\begin{conjecture}[Rigidity conjecture \cite{SCH2}]
\label{RigidityConjecture}
A circle domain $\Omega$ is conformally rigid if and only if its boundary $\partial \Omega$ is conformally removable.
\end{conjecture}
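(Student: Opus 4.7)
The conjecture splits into two implications; the direction ``removable implies rigid'' is substantially more accessible, while the converse appears entangled with Koebe's conjecture itself. For the first direction, suppose $\partial\Omega$ is conformally removable and $f \colon \Omega \to \Omega'$ is a conformal map onto another circle domain. The plan is to extend $f$ to a homeomorphism $\widehat f$ of $\widehat{\C}$ which is conformal on $\widehat{\C} \setminus \partial\Omega$, so that removability of $\partial\Omega$ upgrades $\widehat f$ to a conformal self-map of the sphere, hence to a M\"obius transformation. The extension is built one complementary component at a time: $f$ induces a prime-end bijection between the complementary components $\{D_i\}$ of $\Omega$ and those $\{D_i'\}$ of $\Omega'$, and each $\partial D_i$ is a round, locally connected Jordan curve approached from within $\Omega$, so Carath\'eodory-type arguments provide a continuous extension of $f$ to $\partial D_i$. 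One then extends $f$ across each $D_i$ by the Riemann map $D_i \to D_i'$ whose boundary values agree with $f|_{\partial D_i}$.

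For the converse, one would start from a non-M\"obius homeomorphism $h$ of $\widehat{\C}$ that is conformal on $\widehat{\C} \setminus \partial\Omega$ (witnessing non-removability) and try to produce a non-M\"obius conformal map from $\Omega$ onto a circle domain $\Omega''$. Since $h(\Omega)$ has Jordan complementary components but is not a circle domain in general, one would like to uniformize: by Koebe's Kreisnormierungsproblem (Conjecture \ref{KoebeConjecture}) applied to $h(\Omega)$, there should exist a conformal map $g \colon h(\Omega) \to \Omega''$ onto a circle domain, and $g \circ h|_\Omega \colon \Omega \to \Omega''$ would then serve as the candidate counterexample to rigidity of $\Omega$.

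This second route runs into two serious obstacles that together explain why the conjecture is still open. First, the uniformization $g$ depends on Koebe's conjecture for $h(\Omega)$, which remains open precisely when $\partial\Omega$ has uncountably many complementary components; the Koebe--He--Schramm result (Theorem \ref{HeSchrammTheorem}) does not cover this case. Second, even granting $g$, one must rule out the coincidence $g \circ h|_\Omega = M|_\Omega$ for some M\"obius $M$---equivalently, $h = g^{-1} \circ M$ on all of $\widehat{\C}$---since when such an equality does hold, no contradiction arises and the construction fails to witness non-rigidity. Circumventing these obstacles appears to demand substantial new ideas, which is presumably why the present paper targets only the first direction under additional geometric hypotheses on $\partial\Omega$ (the Jones--Smirnov quasihyperbolic condition) that both imply conformal removability of $\partial\Omega$ and provide enough boundary regularity to carry out the extension argument above.
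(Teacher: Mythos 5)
This statement is a conjecture in the paper, not a theorem, and the paper does not prove it; it only establishes a special case (Theorem~\ref{mainthm1}) under the additional quasihyperbolic hypothesis. So there is no proof of the conjecture in the paper to compare against, and your ``proof proposal'' should be read as a heuristic sketch rather than as a candidate argument.

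Your sketch of the direction ``removable implies rigid'' has two concrete gaps, and together they explain why this direction is still open even though it looks routine. First, the claim that $f$ induces a type-preserving correspondence of complementary components (circles to circles, points to points) together with a continuous extension to $\overline\Omega$ is precisely what is \emph{not} automatic: a priori a conformal map between circle domains could collapse a boundary circle to a point or blow a point up to a circle, and ruling this out is the content of Sections~\ref{sec4}--\ref{sec6} of the paper, which rely essentially on the quasihyperbolic condition and the detour machinery, not merely on removability of $\partial\Omega$. Second, even granting the continuous extension $f\colon \overline\Omega \to \overline{\Omega^*}$, you cannot extend conformally across each complementary disk $D_i$ by a Riemann map matching $f|_{\partial D_i}$: since $D_i$ and $D_i'$ are round disks, conformal maps $D_i \to D_i'$ form a three-real-parameter M\"obius family, whereas $f|_{\partial D_i}$ is an arbitrary circle homeomorphism and will not be a M\"obius restriction in general (indeed, knowing that it \emph{is} M\"obius is essentially what one is trying to prove). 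Consequently the extended $\widehat f$ you describe cannot be both continuous and conformal on $\widehat\C\setminus\partial\Omega$, so removability cannot be invoked directly. The mechanism the paper actually uses is different: extend by Schwarz reflections across the boundary circles to a homeomorphism conjugating the Schottky groups (Lemma~\ref{HomeoExtension}); this extension is conformal only on $\Omega_\infty$, not on $\widehat\C\setminus\partial\Omega$, so removability of $\partial\Omega$ still does not apply, and one instead proves a uniform quasiconformality bound (Theorem~\ref{QCextension}) and then uses the measurable Riemann mapping theorem with a dilatation-rescaling argument to force $K=1$. Your discussion of the converse direction and its entanglement with Koebe's conjecture is essentially right and matches the paper's remarks (it holds for Cantor-set circle domains because there $h(\Omega)$ is automatically a circle domain, but in general requires Koebe).
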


As previously mentioned, the direct implication holds if $\Omega$ has only point boundary components. Furthermore, Conjecture \ref{RigidityConjecture} holds for circle domains with boundaries of $\sigma$-finite length, in view of the preceding remarks. It also holds if $\partial \Omega$ has positive area, since in this case $\Omega$ cannot be conformally rigid, as can be seen using quasiconformal deformation of Schottky groups. In \cite{YOU2}, the second author obtained further evidence in favor of the rigidity conjecture by proving that a circle domain is conformally rigid if and only if it is quasiconformally rigid, meaning that every quasiconformal mapping from the domain onto another circle domain is the restriction of a quasiconformal mapping of the whole sphere. In particular, rigidity of circle domains is quasiconformally invariant, which would also follow if Conjecture \ref{RigidityConjecture} were true, by the quasiconformal invariance of removability (see e.g.\ \cite[Proposition 5.3]{YOU}).

It is well-known, however, that from the point of view of removability and rigidity, considerations of Hausdorff measure and dimension are not enough, and it is rather the ``shape'' than the ``size'' of the set that matters. In this spirit, our main theorem is the following rigidity result.

\begin{theorem}
\label{mainthm1}
Let $\Omega \subset \RiemannSphere$ be a circle domain, and assume without loss of generality that $\infty \in \Omega$. Let $B(0,R)\subset \C$ be a large open ball that contains all complementary components of $\Omega$. Suppose that for a point $x_0\in B(0,R)\cap \Omega$ we have
\begin{align}\label{mainthm1:condition}
\int_{B(0,R)\cap \Omega} k(x,x_0)^2 \,dx <\infty,
\end{align}
where $k(\cdot,\cdot)$ denotes the quasihyperbolic  distance in the region $B(0,R)\cap \Omega$. Then $\Omega$ is conformally rigid.
\end{theorem}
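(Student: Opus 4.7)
The goal is to reduce conformal rigidity of $\Omega$ to conformal removability of $\partial\Omega$, in accordance with Conjecture \ref{RigidityConjecture}. Given a conformal map $f\colon \Omega \to \Omega'$ onto another circle domain, the plan is to construct a homeomorphism $F\colon \RiemannSphere \to \RiemannSphere$ that extends $f$ and is conformal on $\RiemannSphere\setminus\partial\Omega$, and then invoke the removability theorem of Jones--Smirnov \cite{JOS}: compact sets whose complement satisfies a quasihyperbolic integrability condition of the form \eqref{mainthm1:condition} are removable for $W^{1,2}$, which for sphere homeomorphisms is equivalent to conformal removability. Once $F$ is shown to be conformal off $\partial\Omega$, removability forces $F$ to be M\"obius, so $f$ is the restriction of a M\"obius transformation.

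The first step is to show that $f$ extends continuously to $\overline{\Omega}$, carrying each complementary circle $C$ of $\Omega$ homeomorphically onto a complementary circle $C'$ of $\Omega'$, and each complementary point to a point. Extension across an isolated point boundary component is immediate from the classical removable-singularity theorem applied to the bounded holomorphic function $f$. Extension across a circle $C$ is local in nature: $C$ is a smooth free boundary arc and, on a sufficiently small neighborhood $U$ of a point of $C$, $\Omega\cap U$ is simply connected with $C\cap U$ as part of its boundary, so one can hope to apply a Carath\'eodory-type theorem after showing local connectedness of $f(\Omega\cap U)$. The hypothesis \eqref{mainthm1:condition} is the key tool here, as it constrains the local geometry of $\Omega$ near each boundary circle tightly enough (via length-area estimates in the quasihyperbolic metric, as in the Jones--Smirnov setup) to rule out pathological prime-end behavior on either side.

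Once the boundary extension is established, for each complementary disk $D$ bounded by a circle $C\subset\partial\Omega$, let $C'=f(C)$ be the corresponding circle of $\partial\Omega'$, and define $F$ on $D$ by Schwarz reflection, $F|_D := \sigma_{C'}\circ f\circ \sigma_C$, where $\sigma_C$ and $\sigma_{C'}$ are the M\"obius reflections across $C$ and $C'$ respectively; on point boundary components, $F$ is defined to be the continuous extension of $f$. Because $f$ is conformal and the two reflections are anti-conformal, $F|_D$ is conformal, and the formula glues continuously to $f|_C$ on $C$. Together with $F|_{\Omega}=f$, this produces a homeomorphism $F\colon \RiemannSphere\to\RiemannSphere$ that is conformal on $\RiemannSphere\setminus\partial\Omega$, to which the Jones--Smirnov removability theorem applies, completing the proof.

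The main obstacle is unquestionably the boundary extension carried out in the first step, since $\partial\Omega$ may have uncountably many components and the classical boundary extension theorems for multiply connected domains used by He--Schramm \cite{SCH1,SCH2} depend on countability or $\sigma$-finite length of $\partial\Omega$. A natural way around the difficulty is to pass through the equivalence of conformal and quasiconformal rigidity proved by the second author in \cite{YOU2}: it then suffices to show that every \emph{quasiconformal} map from $\Omega$ onto a circle domain extends quasiconformally to $\RiemannSphere$. Quasiconformal mappings enjoy much stronger a priori boundary regularity than conformal ones (local H\"older continuity and ACL), so the analogue of Step 1 in this setting should be amenable to direct estimates using the $L^2$ integrability of the quasihyperbolic distance, after which the reflection--removability argument goes through unchanged.
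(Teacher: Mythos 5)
Your overall plan---extend $f$ across the boundary and then appeal to removability of $\partial\Omega$---has a critical structural flaw that cannot be repaired, and it is not the route the paper takes.

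The Schwarz reflection in your third paragraph does not produce a well-defined map. The formula $F|_D = \sigma_{C'}\circ f\circ\sigma_C$ requires $f$ to be defined on $\sigma_C(D)$, which is the entire exterior of $\overline D$. That exterior contains $\Omega$ but also every \emph{other} complementary disk of $\Omega$, on which $f$ is not defined. One is forced to iterate: reflect $\Omega$ into every complementary disk, then reflect again into the doubly-reflected disks, and so on. This is exactly the Schottky group extension of Section~\ref{subsechom}, yielding an extension $\tilde f$ defined on $\Omega_\infty = \bigcup_k \overline{\Omega_k}$ and, by a limiting argument, on all of $\RiemannSphere$. But then the set where $\tilde f$ fails to be conformal is not $\partial\Omega$; it is $\RiemannSphere \setminus \bigcup_k \Omega_k$, which is the union of \emph{all} the Schottky images $T(\partial\Omega)$ together with the limit set of $\Gamma(\Omega)$. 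There is no reason this much larger set should be conformally removable merely because $\partial\Omega$ is, and the Jones--Smirnov theorem is not directly applicable to it. Indeed, if your argument worked, it would settle Conjecture~\ref{RigidityConjecture} in full, which is open.

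The paper's proof avoids this obstacle entirely. Rather than trying to argue that $\tilde f$ is conformal off a removable set, it proves (Theorem~\ref{QCextension}) that $\tilde f$ is $K$-quasiconformal on $\RiemannSphere$ for a constant $K$ depending only on $\Omega$ and not on $f$. This uniform bound, obtained via modulus estimates that hinge on the absolute continuity Proposition~\ref{ac:lemma Sobolev} and Lemma~\ref{ac:lemma Real}, is the key new analytic input. The rigidity conclusion then follows by a quasiconformal deformation argument with the measurable Riemann mapping theorem and invariant Beltrami coefficients (Proposition~\ref{PropInvariant}): if $f$ were not M\"obius, one could cook up another conformal map of $\Omega$ onto a circle domain with dilatation exceeding $K$, contradicting the uniformity.

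Your treatment of the boundary extension is also too optimistic. The continuous extension to a point boundary component is \emph{not} an application of the classical removable-singularity theorem, because those point components are generally not isolated (they can be a Cantor set). And the local Carath\'eodory picture near a circle is obstructed not by prime-end pathology but by a global phenomenon: one must rule out that a boundary circle is collapsed to a point, or a point blown up to a circle. This is the content of Lemmas~\ref{lemma:circles to circles} and~\ref{lemma:points}, whose proofs are the technical heart of the paper and require the transboundary chains and detour paths of Section~\ref{sec2}, precisely because one cannot use the He--Schramm device that $\partial\Omega$ meets a.e.\ ray in a countable set. Finally, the retreat to quasiconformal maps via \cite{YOU2} does not help with this step: quasiconformal maps do not enjoy better boundary extension behavior than conformal ones in this setting, and the circles-to-circles / points-to-points issue is just as hard.
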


We refer to \eqref{mainthm1:condition} as the \textit{quasihyperbolic condition}. This condition was considered by Jones and Smirnov in \cite{JOS} for the study of conformal removability. More precisely, they proved that domains satisfying \eqref{mainthm1:condition} have conformally removable boundaries. The combination of this fact with Theorem \ref{mainthm1} shows that Conjecture \ref{RigidityConjecture} holds for circle domains satisfying the quasihyperbolic condition.

The quasihyperbolic condition is satisfied for sufficiently regular domains, such as John domains and H\"{o}lder domains for instance (see \cite{SMI} for the definitions). This yields the following corollary.

\begin{corollary}
\label{maincorollary1}
H\"{o}lder circle domains are all conformally rigid. In particular, John circle domains are conformally rigid.
\end{corollary}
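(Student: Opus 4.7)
The plan is to deduce the corollary directly from Theorem~\ref{mainthm1} by verifying that the quasihyperbolic integrability condition \eqref{mainthm1:condition} holds whenever $\Omega$ is a Hölder circle domain; the John case then follows at once, since every John domain is Hölder with quantitative control of the constants. Recall that a bounded domain $U$ is called \emph{Hölder} when there exist constants $a,b>0$ and a basepoint $x_0\in U$ with
\[
k(x,x_0) \le a \log\bigl(1/d(x,\partial U)\bigr) + b, \qquad x\in U,
\]
the quasihyperbolic distance $k$ being computed in $U$. Taking $U := B(0,R)\cap \Omega$, squaring this pointwise bound, and integrating, the condition \eqref{mainthm1:condition} reduces to the estimate
\[
\int_U \log^2\bigl(1/d(x,\partial U)\bigr)\, dx < \infty.
\]

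Next, I would handle this via a layer-cake identity. Writing $\omega(t) := |\{x\in U : d(x,\partial U) \le t\}|$, the integral above equals (up to a harmless constant) $\int_0^{\diam(U)} t^{-1}\log(1/t)\,\omega(t)\, dt$, and this is finite as soon as $\omega(t)$ decays polynomially as $t\to 0^+$. The crucial analytic input, which I would cite from \cite{SMI}, is that for a Hölder domain one indeed has $\omega(t) \le C t^{\alpha}$ for some $\alpha>0$ depending only on the Hölder constants; equivalently, the boundary has Minkowski dimension strictly less than $2$. Combined with the previous reduction, this gives \eqref{mainthm1:condition} and hence, via Theorem~\ref{mainthm1}, the conformal rigidity of $\Omega$.

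The step I expect to require the most care is matching the classical Hölder-domain machinery to the multiply connected circle-domain setting: standard treatments are often phrased via boundary Hölder continuity of a Riemann map onto a simply connected domain. Since $\partial\Omega$ consists of pairwise disjoint circles and points, I expect the required polynomial decay of $\omega(t)$ can be reduced to the simply connected case by a localization argument near each complementary component of $\Omega$; this is the one verification I would carry out carefully rather than treat as folklore. Everything else is a direct consequence of Theorem~\ref{mainthm1} and of the logarithmic growth of $k$ that defines a Hölder domain.
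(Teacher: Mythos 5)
Your argument is correct and takes essentially the same route as the paper, which simply invokes \cite{SMI} for the fact that H\"older domains (and hence John domains) satisfy the quasihyperbolic condition \eqref{mainthm1:condition} and then applies Theorem~\ref{mainthm1}; you fill in the omitted verification via the polynomial decay $\omega(t)\lesssim t^\alpha$ of the near-boundary volume. The caveat you raise at the end is actually a non-issue: the definition of a H\"older domain in \cite{SMI} is intrinsic---precisely the logarithmic growth bound on the quasihyperbolic metric that you wrote down---and makes no reference to Riemann maps or simple connectivity, so it applies verbatim to $B(0,R)\cap\Omega$ with no localization argument needed.
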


The proof of Theorem \ref{mainthm1} is inspired by techniques of He and Schramm \cite{SCH2} and can be briefly described as follows. Let $\Omega$ be a circle domain satisfying the quasihyperbolic condition, and let $f\colon  \Omega \to \Omega^{*}$ be a conformal map of $\Omega$ onto another circle domain $\Omega^{*}$ with $f(\infty)=\infty \in \Omega^*$. The first step is to show that $f$ extends to a homeomorphism of $\overline{\Omega}$ onto $\overline{\Omega^{*}}$. In order to do this, one first needs to rule out the possibility that $f$ maps some point boundary component to a circle, or vice versa. This is proved in \cite{SCH2} using a so-called generalized Gr\"{o}tzsch extremal length argument. However, the argument relies in a crucial way on the fact that $\partial \Omega$ intersects almost every line through the origin (and almost every circle centered at the origin) in an at most countable set. This holds provided $\partial \Omega$ has $\sigma$-finite length, as is assumed in \cite{SCH2}, but may fail under the quasihyperbolic assumption only. We circumvent this difficulty using so-called detours, which were formalized in \cite{Nt}, as well as techniques inspired from \cite{JOS}. Once $f$ has been shown to extend to a homeomorphism of $\overline{\Omega}$ onto $\overline{\Omega^*}$, one can use reflections across the boundary circles to extend $f$ to a homeomorphism of the whole sphere that conjugates the Schottky groups of $\Omega$ and $\Omega^*$. The next step is to use a modulus argument, based on the fact that $f$ is absolutely continuous ``up to the boundary" (see Proposition \ref{ac:lemma Sobolev}), to show that $f$ is quasiconformal on $\RiemannSphere$ with {quasiconformal dilatation} $K$ less than some uniform constant $K_0=K_0(\Omega)$ depending only on $\Omega$. Now, if $f$ is not M\"obius and thus $K>1$, then one can use the measurable Riemann mapping theorem to construct another quasiconformal mapping of $\RiemannSphere$ that maps $\Omega$ conformally onto another circle domain but has {quasiconformal dilatation} bigger than $K_0$, which is the maximal allowed dilatation for such a map. This contradiction shows that $K=1$ and therefore $f$ must be a M\"{o}bius transformation.

Lastly, we mention that rigidity with respect to more general classes of maps (e.g.\ quasisymmetric) was extensively studied by Bonk, Kleiner, Merenkov, Wildrick and others (\cite{BKM}, \cite{MER}, \cite{MER2}, \cite{MEW}), in the case of Schottky sets. Although circle domains and Schottky sets are quite different (the latters are not domains and do not have point boundary components), some of our techniques may apply in this other setting.

The paper is structured as follows. Section \ref{sec2} contains preliminaries on the quasihyperbolic condition and detours of paths, and Section \ref{sec3} contains topological results that will be needed for the proof of Theorem \ref{mainthm1}. In Section \ref{sec4}, we prove that boundary circles map to boundary circles, and Section \ref{sec5} contains the proof that point boundary components are mapped to point boundary components. Then, in Section \ref{sec6}, we prove the continuous extension of $f$ to the boundary of $\Omega$. Section \ref{sec7} contains the proof of the quasiconformal extension to the whole sphere. In Section \ref{sec8}, we conclude the proof of Theorem \ref{mainthm1}. Finally, in Section \ref{sec9} we discuss further remarks on Conjecture \ref{RigidityConjecture}.

\subsection*{Acknowledgments}
Part of this project was completed while the first author was visiting the University of Hawaii. He thanks the Faculty and Staff of the Department of Mathematics for their hospitality. Both authors would also like to thank Mario Bonk for several fruitful conversations and suggestions, {as well as the anonymous referee for many helpful comments and corrections.}

\section{Preliminaries and the quasihyperbolic condition}
\label{sec2}

In this section we shall prove some important properties of domains $D \subset \mathbb C$ satisfying the quasihyperbolic condition of Theorem \ref{mainthm1}, i.e.,
\begin{align}\label{quasi:condition}
\int_{D} k(x,x_0)^2 \,dx <\infty
\end{align}
for some point $x_0\in D$. We also include in Subsections \ref{ac:section} and \ref{distortion:section} a few preliminaries required for the proof of the main result. We first start with some definitions.

Let $D \subsetneq \mathbb C$ be a domain, i.e., a connected open set. For a point $x\in D$, define $\delta_D(x)\coloneqq \dist(x,\partial D)$ (using the Euclidean distance). We define the \textit{quasihyperbolic distance} of two points $x_1,x_2\in D$ by
\begin{align*}
k_D(x_1,x_2)=\inf_{\gamma} \int_\gamma \frac{1}{\delta_D(x)}\,ds,
\end{align*}
where the infimum is taken over all rectifiable paths $\gamma\subset D$ that connect $x_1$ and $x_2$; here the symbol $\gamma$ denotes also the trace of the path $\gamma$. The subscript $D$ will be omitted if the domain is implicitly understood.

\begin{remark}\label{quasi:invariant}
The quasihyperbolic distance is trivially invariant under Euclidean isometries. Namely, if $T\colon \C \to \C$ is an isometry and $D\subsetneq \C$ is a domain with $x_1,x_2\in D$, then $k_{T(D)}(T(x_1),T(x_2))= k_D(x_1,x_2)$. Also, the quasihyperbolic distance is scale invariant; in other words, if $r>0$ and $T(x)=rx$, then the above equality holds as well. Hence, the quasihyperbolic condition \eqref{quasi:condition} is invariant under translation and scaling.

Furthermore, if $T\colon D\to T(D)\subsetneq \C$ is a bi-Lipschitz map, then
$$k_{T(D)}(T(x_1),T(x_2)) \simeq k_D(x_1,x_2)$$
and this shows that condition \eqref{quasi:condition} is invariant under bi-Lipschitz maps. Here and in what follows the notation $A\simeq B$ means that there exists a constant $C>0$ such that $C^{-1}A\leq B\leq CA$ and $A\lesssim B$ means that $A\leq CB$.
\end{remark}

\vspace{1em}

A simple (i.e., injective) curve $\gamma\colon [0,1]\to D$ is called a \textit{quasihyperbolic geodesic} if for any two points $x_1,x_2\in \gamma$ we have
\begin{align*}
k(x_1,x_2)= \int_{\gamma|_{[x_1,x_2]}} \frac{1}{\delta_D(x)}\,ds,
\end{align*}
where $\gamma|_{[x_1,x_2]}$ denotes the subpath of $\gamma$ between $x_1$ and $x_2$. We allow the possibility that $\gamma$ is defined on a (half) open interval and does not have endpoints in $D$, or it even accumulates at $\partial D$. A compactness argument shows that for any two points $x_1,x_2\in D$, there exists a quasihyperbolic geodesic that connects them, see e.g.\ \cite[Theorem 2.5.14]{BBI}.

For a domain $D\subsetneq \mathbb C$ we also consider the \textit{Whitney cube decomposition} $\mathcal W(D)$, which is a collection of closed dyadic cubes $Q\subset D$ (or rather squares) with the following properties:
\begin{enumerate}
\smallskip
\item the cubes of $\mathcal W(D)$ have disjoint interiors and $\bigcup_{Q\in \mathcal W(D)}Q=D$,
\smallskip
\item $\sqrt{2} \, \ell(Q) < \dist(Q,\partial D) \leq 4 \sqrt{2} \, \ell(Q)$ for all $Q\in \mathcal W(D)$,
\smallskip
\item if $Q_1\cap Q_2\neq \emptyset$, then $1/4 \leq \ell(Q_1)/\ell(Q_2)\leq 4$, for all $Q_1,Q_2\in \mathcal W(D)$.
\end{enumerate}

Here, $\ell(Q)$ denotes the sidelength of $Q$. See \cite[Theorem 1, p.~167]{ST} for the existence of the decomposition. Note that (2) implies that $k(x_1,x_2) \leq 1$ for all $x_1,x_2$ lying in the same cube $Q$, so that in particular Whitney cubes have uniformly bounded quasihyperbolic diameter.

We fix a basepoint $x_0\in D$, and denote by $k(x_0,A)$ the quasihyperbolic distance from $x_0$ to a set $A\subset D$. For each $j\in \mathbb N$ we define
\begin{align*}
D_j\coloneqq \{ Q\in \mathcal W(D): k(x_0,Q)\leq j\},
\end{align*}
and $D_0\coloneqq \emptyset$. Note that $D_j$ contains finitely many cubes for each $j\in \N$, since all cubes of $D_j$ are contained in a compact subset of $D$; see also Remark \ref{quasi:remark number} below. Each Whitney cube $Q$ is contained in $D_j\setminus D_{j-1}$ for some unique $j\in \mathbb N$. In this case, we define $j(Q)\coloneqq j$. Also, we have $D= \bigcup_{j=1}^\infty\bigcup_{Q:j(Q)=j} Q$. Two important observations are the following:

\begin{remark}\label{quasi:remark number}
Let $\gamma$ be a quasihyperbolic geodesic passing through $x_0$. Then there exists a uniform constant $N\in \mathbb N$ such that for each $j\in \mathbb N$ there exist at most $N$ Whitney cubes $Q\in D_j\setminus D_{j-1}$ intersecting $\gamma$. This follows from the observation that if $|x_1-x_2| \geq \delta_D(x_1)/2$, then the number of Whitney cubes $N(x_1,x_2)$ that intersect a quasihyperbolic geodesic joining $x_1, x_2$ satisfies
$$k(x_1,x_2) \simeq N(x_1,x_2).$$
See e.g.\ \cite[p.~205]{KOS}.
\end{remark}

\begin{remark}\label{quasi:remark length}
If $\gamma$ is a quasihyperbolic geodesic intersecting $Q$, then its Euclidean length inside $Q$ is bounded, up to a multiplicative constant, by $\ell(Q)$. More precisely, $\mathcal H^1(Q\cap \gamma) \lesssim \ell(Q)$.  {Here $\mathcal{H}^1(S)$ denotes the 1-dimensional Hausdorff measure of a set $S\subset \mathbb{C}$, defined by}
$$\mathcal{H}^1(S)\coloneqq \lim_{\delta \to 0} \mathcal{H}_\delta^1(S),$$
where
$$
\mathcal{H}_\delta^1(S)\coloneqq \inf \left\{ \sum_{j=1}^\infty \operatorname{diam}(U_j): S \subset \bigcup_j U_j,\, \operatorname{diam}(U_j)<\delta \right\}.
$$
\end{remark}

\subsection{Existence of geodesics}
Here, we prove that for domains $D$ satisfying the quasihyperbolic condition \eqref{quasi:condition} the  quasihyperbolic geodesics land surjectively onto the boundary $\partial D$.

\begin{lemma}\label{quasi:lemmageodesics}
Suppose that there exists a point $x_0\in D$ with
\begin{align*}
\int_D k(x,x_0)^2 \,dx<\infty.
\end{align*}
Let $z\in  \partial D$ and $z_n\in D$ be a sequence with $z_n\to z$. Also, consider quasihyperbolic geodesics $\gamma_n\colon [0,1]\to D$ from $x_0$ to $z_n$, parametrized by rescaled Euclidean arc-length. Then there exists a subsequence of $\gamma_n$ that converges uniformly to a geodesic $\gamma\subset D$, landing at $z$.

Moreover, if $z_n\in \partial D$ is a sequence with $z_n\to z\in \partial D$, and $\gamma_n$ are quasihyperbolic geodesics from $x_0$ to $z_n$ parametrized by rescaled {Euclidean} arc-length, then there exists a subsequence of $\gamma_n$ that converges uniformly to a geodesic $\gamma$ from $x_0$ to $z$.
\end{lemma}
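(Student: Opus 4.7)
The plan is to combine a uniform Euclidean length bound on the geodesics $\gamma_n$ (derived from the quasihyperbolic integrability condition) with an Arzel\`a--Ascoli argument to extract a subsequential limit, and then to exploit the geodesic property of each $\gamma_n$ in order to force the limit to stay inside $D$ until $t=1$.

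The first step I would carry out is a uniform Euclidean length bound for every quasihyperbolic geodesic $\gamma$ starting at $x_0$. Using Remark~\ref{quasi:remark length} and splitting $\ell(Q)=j(Q)^{-1}\cdot j(Q)\ell(Q)$, Cauchy--Schwarz gives
\begin{align*}
\length(\gamma) \lesssim \sum_{Q\cap\gamma\ne\emptyset}\ell(Q)
\le \Bigl(\sum_{Q\cap\gamma\ne\emptyset} j(Q)^{-2}\Bigr)^{1/2}\Bigl(\sum_{Q\cap\gamma\ne\emptyset} j(Q)^{2}\ell(Q)^{2}\Bigr)^{1/2}.
\end{align*}
Remark~\ref{quasi:remark number} bounds the first factor by $\sqrt{N\pi^{2}/6}$, while $j(Q)^{2}\ell(Q)^{2}\simeq \int_Q k(\cdot,x_0)^{2}\,dx$ bounds the second by $\bigl(\int_D k(\cdot,x_0)^{2}\,dx\bigr)^{1/2}$. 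Hence $L_n\coloneqq\length(\gamma_n)$ is uniformly bounded, the $\gamma_n$ are uniformly Lipschitz, and Arzel\`a--Ascoli yields a subsequence converging uniformly to a Lipschitz map $\gamma\colon[0,1]\to\overline D$ with $\gamma(0)=x_0$ and $\gamma(1)=\lim z_n=z$.

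The main task is to show $\gamma([0,1))\subset D$. Set $T\coloneqq\inf\{t\in[0,1]:\gamma(t)\in\partial D\}$ and suppose for contradiction that $T<1$. If there exists $T'\in(T,1]$ with $\gamma(T')\in D$, I would pick $t_1\in(0,T)$ with $\gamma(t_1)\in D$ (possible since $\gamma([0,T))\subset D$) and invoke the geodesic identity
\begin{align*}
k(\gamma_n(t_1),\gamma_n(T'))=k(\gamma_n(t_1),\gamma_n(T))+k(\gamma_n(T),\gamma_n(T'));
\end{align*}
the left side converges to the finite value $k(\gamma(t_1),\gamma(T'))$ by continuity of $k$ on $D\times D$, while $k(\gamma_n(t_1),\gamma_n(T))\to\infty$ because $\gamma_n(T)\to\gamma(T)\in\partial D$ and $k(x,y)\ge\bigl|\log(\delta_D(x)/\delta_D(y))\bigr|$, a contradiction. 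The hard case --- and this is where the quasihyperbolic condition is really used --- is $\gamma([T,1])\subset\partial D$. Here $\epsilon_n\coloneqq\|\gamma_n-\gamma\|_\infty\to 0$, so $\delta_D(\gamma_n(t))\le\epsilon_n$ for all $t\in[T,1]$; every Whitney cube $Q$ meeting $\gamma_n|_{[T,1]}$ then satisfies $\ell(Q)\lesssim\epsilon_n$, and by the same quasihyperbolic estimate $j(Q)\gtrsim\log(1/\epsilon_n)\eqqcolon J_n\to\infty$. Running the Step 1 Cauchy--Schwarz computation only over cubes of level $\ge J_n$ gives
\begin{align*}
\length(\gamma_n|_{[T,1]}) \lesssim \Bigl(\sum_{j\ge J_n}N/j^{2}\Bigr)^{1/2}\Bigl(\int_D k(\cdot,x_0)^{2}\,dx\Bigr)^{1/2}\xrightarrow{n\to\infty}0,
\end{align*}
contradicting $\length(\gamma_n|_{[T,1]})=(1-T)L_n\to(1-T)L>0$ (note $L>0$ since $\gamma(0)=x_0\ne z=\gamma(1)$). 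Hence $T=1$.

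Finally, the geodesic property of $\gamma$ on $[0,1)$ is a standard lower semicontinuity argument: for $0\le s<t<1$, both $\gamma(s),\gamma(t)\in D$, and passing to the limit in the identity expressing $k(\gamma_n(s),\gamma_n(t))$ as the quasihyperbolic length of $\gamma_n|_{[s,t]}$, using continuity of $k$ on $D\times D$ together with lower semicontinuity of the weighted length functional (the weight $1/\delta_D$ being continuous on $D$), yields equality. For the second statement of the lemma, I would first apply the first statement to a sequence of interior approximations of each $z_n\in\partial D$ to produce the geodesics $\gamma_n$ from $x_0$ to $z_n$; the preceding argument then applies verbatim to extract a subsequential uniform limit landing at $z$.
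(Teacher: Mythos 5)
Your proof is correct and follows the same overall strategy as the paper: establish a uniform Euclidean length bound on geodesics through $x_0$ via the quasihyperbolic $L^2$ condition and Cauchy--Schwarz (this is precisely the paper's Lemma~\ref{quasi:lemma length}), extract a uniform limit by Arzel\`a--Ascoli, show the limit is a geodesic, and rule out the two ways $\gamma$ could touch $\partial D$ before time $1$. The interesting difference is in the details of that case analysis. For your Case~(a), the paper argues by compactness of quasihyperbolic balls: subpaths $\gamma_n|_{[0,t_2]}$ are geodesics from $x_0$ with uniformly bounded quasihyperbolic length, hence remain in a fixed compact subset of $D$, so the limit cannot meet $\partial D$. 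You instead invoke the elementary lower bound $k(x,y)\geq |\log(\delta_D(x)/\delta_D(y))|$ in the geodesic additivity identity; both are fine. For Case~(b), the paper picks a point $y=\gamma(t_3)\neq z$ on the limiting continuum, considers the subpaths $\beta_n$ of $\gamma_n$ from $y_n$ to $z_n$, and shows these eventually avoid all shallow cubes using a triangle inequality $k(x_0,z_n)\leq k(x_0,Q)+1+k(y_n,z_n)$ which contradicts the geodesic identity $k(x_0,z_n)=k(x_0,y_n)+k(y_n,z_n)$; you instead exploit the uniform bound $\|\gamma_n-\gamma\|_\infty=\epsilon_n$ directly to conclude $\gamma_n|_{[T,1]}$ lives $\epsilon_n$-close to $\partial D$ and therefore meets only cubes of level $\gtrsim\log(1/\epsilon_n)$. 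Your route is arguably a little more direct. One small imprecision: the claim $L_n\to L$ is not automatic (uniform convergence of rescaled arc-length parametrized curves only gives $L\leq\liminf L_n$); but you don't need it, since $L_n=\length(\gamma_n)\geq |x_0-z_n|\to|x_0-z|>0$ already shows $\liminf(1-T)L_n>0$. Your lower-semicontinuity argument for the geodesic property of the limit is a hands-on version of the paper's appeal to the general fact in \cite[Theorem 2.5.17]{BBI}; it works, provided you observe that $\gamma|_{[s,t]}$ and eventually $\gamma_n|_{[s,t]}$ lie in a common compact subset of $D$ so that the weight $1/\delta_D$ is bounded and continuous there.
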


In other words, after reparametrizing, $\gamma|_{[0,1)}\subset D$ and $\gamma(1)=z$. This lemma shows that each point $z\in \partial D$ is the landing point of a quasihyperbolic geodesic.

\begin{remark}\label{quasi:remark curves}
The proof is very similar to the discussion in \cite[pp.~273--274]{JOS}, where under the same assumptions the authors construct curves that behave like quasihyperbolic geodesics and satisfy the conclusion of the lemma. Here, we actually prove that the quasihyperbolic geodesics do in fact yield the conclusion, and that we do not need to construct new curves.
\end{remark}

\begin{proof}
The second part of the lemma can be proved exactly as the first part, so we omit that proof.

Let $z_n\in D$ be a sequence with $z_n\to z\in \partial D$, and consider the quasihyperbolic geodesics $\gamma_n\colon [0,1]\to D$, parametrized by rescaled {Euclidean} arc-length, such that $\gamma_n(0)=x_0$ and $\gamma_n(1)=z_n$. We first claim that these geodesics have uniformly bounded length. This will follow from the next lemma.

\begin{lemma}\label{quasi:lemma length}
Let $\gamma$ be a quasihyperbolic geodesic passing through $x_0$. Let $j_0\in \N$, and consider $\beta$ to be a subpath of $\gamma$ intersecting only Whitney cubes $Q\in \mathcal W(D)$ with $j(Q)\geq j_0$. Then
\begin{align*}
\length(\beta)\lesssim j_0^{-1/2}.
\end{align*}
In particular, $D$ is bounded.
\end{lemma}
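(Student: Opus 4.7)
The plan is to estimate the Euclidean length of $\beta$ cube-by-cube using Remarks \ref{quasi:remark number} and \ref{quasi:remark length}, and then apply Cauchy--Schwarz twice against the integrability hypothesis \eqref{quasi:condition}. First I would extract a summability statement from \eqref{quasi:condition}. For a Whitney cube $Q$ with $j(Q)=j\geq 2$, the definition of $j(Q)$ together with the fact (from Whitney property (2)) that any two points of a single cube lie within quasihyperbolic distance at most $1$ forces $k(x,x_0)> j-1$ for every $x\in Q$. Since $Q$ has Euclidean area $\ell(Q)^2$, integrating gives $\int_Q k(x,x_0)^2\,dx \gtrsim j(Q)^2\ell(Q)^2$, and summing over $Q\in\mathcal W(D)$ (the contribution from $Q\in D_1$ is trivially finite and absorbed into the constant) yields
\begin{align*}
\sum_{Q\in\mathcal W(D)} j(Q)^2\, \ell(Q)^2 \lesssim \int_D k(x,x_0)^2\,dx< \infty.
\end{align*}

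Next, Remark \ref{quasi:remark length} bounds $\mathcal H^1(Q\cap\gamma)\lesssim \ell(Q)$ for every cube meeting $\gamma$, so
\begin{align*}
\length(\beta) \leq \sum_{j\geq j_0}\sum_{\substack{Q\cap\gamma\neq\emptyset\\ j(Q)=j}} \mathcal H^1(Q\cap\gamma) \lesssim \sum_{j\geq j_0}\sum_{\substack{Q\cap\gamma\neq\emptyset\\ j(Q)=j}}\ell(Q).
\end{align*}
Setting $a_j\coloneqq \sum_{Q\cap\gamma\neq\emptyset,\, j(Q)=j}\ell(Q)^2$ and using Remark \ref{quasi:remark number} (at most $N$ terms in each inner sum), Cauchy--Schwarz gives $\sum_{Q\cap\gamma\neq\emptyset,\, j(Q)=j}\ell(Q) \leq \sqrt{Na_j}$. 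A second application, pairing $1/j$ with $j\sqrt{a_j}$, then produces
\begin{align*}
\sum_{j\geq j_0}\sqrt{a_j} \leq \Big(\sum_{j\geq j_0}\tfrac{1}{j^2}\Big)^{1/2}\Big(\sum_{j\geq j_0} j^2 a_j\Big)^{1/2} \lesssim j_0^{-1/2},
\end{align*}
since the first factor is $\lesssim j_0^{-1/2}$ by integral comparison and the second is dominated by $\bigl(\sum_{Q} j(Q)^2\ell(Q)^2\bigr)^{1/2}<\infty$ from the first step. Combining yields $\length(\beta)\lesssim j_0^{-1/2}$.

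For the boundedness of $D$, every $x\in D$ may be joined to $x_0$ by a quasihyperbolic geodesic $\gamma_x$. Applying the estimate just proved with $j_0=1$ (again absorbing the finite contribution of the cubes in $D_1$ into the constant) shows that $\length(\gamma_x)$ is bounded by a constant independent of $x$, so $|x-x_0|\leq \length(\gamma_x)\lesssim 1$ and $D$ is bounded. The only mild technicality will be the justification of the lower bound $k(x,x_0)\gtrsim j(Q)$ in the first step; this falls out of carefully unpacking the definition of $j(Q)$ together with the uniform quasihyperbolic diameter of Whitney cubes, and everything else is bookkeeping.
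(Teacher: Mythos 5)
Your proof is correct and uses the same ingredients and structure as the paper's: the bound $\mathcal H^1(Q\cap\gamma)\lesssim\ell(Q)$ from Remark \ref{quasi:remark length}, the cardinality bound from Remark \ref{quasi:remark number}, the comparison $\sum_Q j(Q)^2\ell(Q)^2 \simeq \|k(\cdot,x_0)\|_{L^2(D)}^2$, and Cauchy--Schwarz with the weights $j(Q)$ and $j(Q)^{-1}$. The only cosmetic difference is that you apply Cauchy--Schwarz in two stages (first within each level $j(Q)=j$ and then over $j$), whereas the paper inserts $j(Q)j(Q)^{-1}$ and applies it once over all cubes meeting $\beta$; these amount to the same estimate.
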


\begin{proof}
By Remark \ref{quasi:remark length}, we have
\begin{align*}
\length(\beta)&\lesssim \sum_{Q: Q\cap \beta\neq \emptyset}\ell(Q) =\sum_{Q:Q\cap \beta\neq \emptyset}\ell(Q)j(Q)j(Q)^{-1}\\
&\leq \left(\sum_{Q\in \mathcal W(D)} \ell(Q)^2 j(Q)^2\right)^{1/2} \left(\sum_{Q:Q\cap \beta\neq \emptyset} j(Q)^{-2}\right)^{1/2}.
\end{align*}
The first term is comparable to $\| k(\cdot, x_0)\|_{L^2(D)}$. Indeed, since $\bigcup_{j=1}^\infty \bigcup_{Q:j(Q)=j} Q=D$, we have
\begin{align*}
\int_D k(x,x_0)^2 \,dx&= \sum_{j=1}^\infty \sum_{Q: j(Q)=j} \int_Q k(x,x_0)^2 \,dx  \\
&\simeq \sum_{j=1}^\infty  \sum_{Q: j(Q)=j} \ell(Q)^2 j(Q)^2 = \sum_{Q\in \mathcal W(D)} \ell(Q)^2j(Q)^2.
\end{align*}
Here, we also used the fact that $k(x,x_0)\simeq j(Q)$ for all $x\in Q$ with $j(Q)>1$, since the quasihyperbolic diameter of $Q$ is at most $1$. Moreover, in the case $j(Q)=1$, we also have $\int_Q k(x,x_0)^2 \,dx \simeq \ell(Q)^2 j(Q)^2$.

Hence, we have
\begin{align*}
\length(\beta)\lesssim \left(\sum_{Q:Q\cap \beta \neq \emptyset} j(Q)^{-2}\right)^{1/2}
\end{align*}
and it suffices to control the latter term. Using Remark \ref{quasi:remark number}, we may write
\begin{align*}
\sum_{Q:Q\cap \beta \neq \emptyset} j(Q)^{-2} = \sum_{j\geq j_0} \sum_{\substack{Q:Q\cap \beta\neq \emptyset \\ j(Q)=j}} j^{-2} \simeq \sum_{j\geq j_0}^\infty j^{-2} \simeq j_0^{-1}.
\end{align*}
The conclusion follows.
\end{proof}

Now, we return to the proof of Lemma \ref{quasi:lemmageodesics}. The paths $\gamma_n$ have uniformly bounded lengths by Lemma \ref{quasi:lemma length} and they stay in the set $\br D$, which is compact again by Lemma \ref{quasi:lemma length}. Applying the Arzel\`a-Ascoli theorem, we may extract a subsequence, still denoted by $\gamma_n$, that converges uniformly to a rectifiable path $\gamma \colon [0,1]\to \br D$ with $\gamma(0)=x_0$ and $\gamma(1)=z$; see \cite[Theorem 2.5.14]{BBI} for a detailed argument. We parametrize $\gamma \colon [0,1]\to \br D$ by rescaled {Euclidean} arc-length, and we now have to show that $\gamma|_{[0,1)}$ is a {quasihyperbolic} geodesic.

For this, it suffices to prove that $\gamma|_{[0,1)}$ is a path contained in $D$. It is a general fact that if a sequence of geodesics $\zeta_n$ in a \textit{length space} $(X,d)$ converges uniformly to a path $\zeta$ in $X$, then $\zeta$ is also a geodesic; see \cite[Theorem 2.5.17]{BBI}. In our case, one needs to apply this principle to all compact subpaths $\zeta\subset \subset D$ of the path $\gamma$, and suitable subpaths $\zeta_n$ of $\gamma_n$ converging to $\zeta$.

Now, we argue that $\gamma|_{[0,1)} \subset D$. Suppose for a contradiction that there exists some time $t_1\in (0,1)$ such that $\gamma(t_1)\in \partial D$, and let $t_1$ be the first such time. Note that the curve $\gamma$ cannot be constant on $(t_1,1)$, since it is parametrized by {Euclidean} arc-length. Hence, either there exists $t_2\in (t_1,1)$ such that $\gamma(t_2)\in D$, or $\{\gamma(t): t\in [t_1,1]\}$ is a non-degenerate continuum, i.e., it contains more than one point, contained in $\partial D$. The first scenario can be easily excluded, because the quasihyperbolic geodesics connecting $x_0$ to points in a small neighborhood of $\gamma(t_2)$ must remain in a fixed compact subset of $D$. Thus, the limiting path $\gamma|_{[0,t_2]}$ is also contained in the same compact set, and it cannot meet $\partial D$, a contradiction.

In the second case, suppose that there exists a point $y=\gamma(t_3)\in \partial D$, $t_3\in (t_1,1)$, with $y\neq z$. Let $y_n\in \gamma_n$ be points converging to $y$, and let $\beta_n$ be the subpath of $\gamma_n$ from $y_n$ to $z_n$.

Then, for each $j_0\in \N$, there exists $n_0\in \N$ such that for $n\geq n_0$ the path $\beta_n$ intersects only cubes $Q$ with $j(Q) \geq j_0$. Indeed, otherwise there exists a fixed cube $Q$ that is intersected by {infinitely many paths} $\beta_n$. Suppose this is the case for all $n\in \N$, by passing to a subsequence. Then
\begin{align*}
k(x_0,z_n)\leq k(x_0,Q)+1+ k(Q,z_n) \leq k(x_0,Q)+1+ k(y_n,z_n),
\end{align*}
but this is strictly less than $k(x_0,z_n)=k(x_0,y_n)+k(y_n,z_n)$ for large $n$, since $y_n\to y\in \partial D$; recall that $\gamma_n$ is a geodesic passing through $x_0,y_n$, and $z_n$. This is a contradiction.

We fix $j_0$, $n_0\in \N$, as above. Using Lemma \ref{quasi:lemma length} we conclude that
\begin{align*}
|y_n-z_n|\leq \length (\beta_n)\lesssim j_0^{-1/2}
\end{align*}
for $n\geq n_0$. Taking limits, it follows that $y=z$, a contradiction.
\end{proof}

Following \cite{JOS}, for each cube $Q\in \mathcal W(D)$ we define the \textit{shadow} $SH(Q)$ of $Q$ to be the set of points $z\in \partial D$ such that there exists a quasihyperbolic geodesic {starting at $x_0$}, passing through $Q$ and landing at $z$. We then define
\begin{align*}
s(Q)= \diam( SH(Q)).
\end{align*}
\begin{lemma}\label{quasi:shadow:compact}
For each Whitney cube $Q\in \mathcal W(D)$ the shadow $SH(Q)$ is a compact subset of $\partial D$.
\end{lemma}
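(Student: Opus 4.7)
The plan is to prove $SH(Q)$ is closed in $\partial D$; since Lemma \ref{quasi:lemma length} ensures $D$ is bounded, $\partial D$ is compact, so closedness will immediately upgrade to compactness. By construction $SH(Q) \subset \partial D$, so I will fix a sequence $z_n \in SH(Q)$ converging to some $z \in \partial D$ and exhibit a quasihyperbolic geodesic from $x_0$ landing at $z$ that still meets $Q$.

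For each $n$, the definition of $SH(Q)$ furnishes a quasihyperbolic geodesic $\gamma_n \colon [0,1] \to \br D$, parametrized by rescaled Euclidean arc-length, with $\gamma_n(0) = x_0$, $\gamma_n(1) = z_n$, and $Q \cap \gamma_n \neq \emptyset$. Because $z_n \in \partial D$ and $z_n \to z \in \partial D$, the second part of Lemma \ref{quasi:lemmageodesics} applies directly and yields a subsequence (still denoted $\gamma_n$) that converges uniformly to a quasihyperbolic geodesic $\gamma \colon [0,1] \to \br D$ with $\gamma(0) = x_0$ and $\gamma(1) = z$.

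It remains to verify that $\gamma$ passes through $Q$. Choose $t_n \in [0,1]$ with $\gamma_n(t_n) \in Q$; by compactness of $[0,1]$, we may pass to a further subsequence with $t_n \to t \in [0,1]$. Uniform convergence gives $\gamma(t) = \lim_n \gamma_n(t_n)$, and since $Q$ is closed this limit lies in $Q$. Note $t < 1$: indeed $Q \subset D$ by the Whitney decomposition, so $Q \cap \partial D = \emptyset$, which excludes $\gamma(t) = z$. Hence $\gamma$ is a quasihyperbolic geodesic from $x_0$ passing through $Q$ and landing at $z$, so $z \in SH(Q)$, completing the proof.

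The main point to watch is matching the hypothesis of Lemma \ref{quasi:lemmageodesics} (points on the boundary converge to a boundary point) with our setup, and then using closedness of the Whitney cube to transfer the geometric condition "passes through $Q$" to the limiting geodesic; no further estimates are required.
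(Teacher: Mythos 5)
Your proof is correct and takes essentially the same approach as the paper: reduce to closedness of $SH(Q)$ in the compact set $\partial D$, apply the second part of Lemma \ref{quasi:lemmageodesics} to extract a uniformly convergent subsequence of geodesics, and use closedness of $Q$ to conclude the limit geodesic still meets $Q$. You simply spell out the last step (choosing $t_n$, passing to a subsequence, and noting $t<1$) in more detail than the paper, which is fine.
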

\begin{proof}
Since $\partial D$ is bounded by Lemma \ref{quasi:lemma length}, it suffices to show that $SH(Q) \subset \partial D$ is closed. If $z_n$ is a sequence in $SH(Q)$ converging to $z \in \partial D$, then a sequence of geodesics $\gamma_n$ passing through $Q$ and landing at $z_n$ has a subsequence converging to a geodesic landing at $z$, by Lemma  \ref{quasi:lemmageodesics}. This limiting geodesic necessarily passes through $Q$ as well, since $Q$ is closed, so that $z \in SH(Q)$.
\end{proof}

\begin{lemma}\label{quasi:shadow sum}
We have
\begin{align*}
\sum_{Q\in \mathcal W(D)} s(Q)^2 \lesssim \int_D k(x,x_0)^2 \,dx.
\end{align*}
\end{lemma}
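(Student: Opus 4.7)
The strategy is to bound $s(Q)=\diam(SH(Q))$ by Euclidean lengths of suitable subpaths of quasihyperbolic geodesics leaving $Q$, apply the Cauchy--Schwarz trick from the proof of Lemma~\ref{quasi:lemma length}, and sum over $Q$ via Fubini.

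First, for any two points $z_1,z_2\in SH(Q)$, pick quasihyperbolic geodesics $\gamma_{z_i}$ from $x_0$ through $Q$ landing at $z_i$, and let $\beta_{z_i}$ denote the subpath of $\gamma_{z_i}$ from (the last exit of) $Q$ to $z_i$. The triangle inequality yields $|z_1-z_2|\le \length(\beta_{z_1})+\diam(Q)+\length(\beta_{z_2})$, so, since $\diam(Q)\simeq \ell(Q)$,
\[
s(Q)\;\lesssim\;\ell(Q)+2\sup_{z\in SH(Q)}\length(\beta_z).
\]
Since $k(x_0,\cdot)$ is monotone along $\gamma_z$, every Whitney cube $Q'$ intersecting $\beta_z$ satisfies $j(Q')\ge j(Q)-c$ for a universal constant $c$.

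Second, the Cauchy--Schwarz computation of Lemma~\ref{quasi:lemma length}, combined with Remark~\ref{quasi:remark number}, gives
\[
\length(\beta_z)^2 \le \Big(\!\!\sum_{Q'\cap\beta_z\neq\emptyset}\!\!\ell(Q')^2 j(Q')^2\Big)\Big(\!\!\sum_{Q'\cap\beta_z\neq\emptyset}\!\!j(Q')^{-2}\Big) \lesssim j(Q)^{-1}\!\!\sum_{Q'\in\mathcal D(Q)}\!\!\ell(Q')^2 j(Q')^2,
\]
where $\mathcal D(Q)\coloneqq\bigcup_{z\in SH(Q)}\{Q'\in\mathcal W(D):Q'\cap\beta_z\neq\emptyset,\,Q'\neq Q\}$ is the ``descendant cone'' of $Q$. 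Combining,
\[
s(Q)^2\;\lesssim\;\ell(Q)^2\;+\;j(Q)^{-1}\!\!\sum_{Q'\in\mathcal D(Q)}\!\!\ell(Q')^2 j(Q')^2.
\]

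Third, summing over $Q\in\mathcal W(D)$ and swapping the order of summation,
\[
\sum_{Q}s(Q)^2 \;\lesssim\; \area(D)\;+\;\sum_{Q'}\ell(Q')^2 j(Q')^2\cdot\!\!\sum_{Q\,:\,Q'\in\mathcal D(Q)}\!\!\frac{1}{j(Q)}.
\]
Since $D$ is bounded by Lemma~\ref{quasi:lemma length}, $\area(D)=\sum_Q\ell(Q)^2<\infty$ and is absorbed into $\int_D k(x,x_0)^2\,dx$.

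The main obstacle is uniformly bounding the inner sum $\sum_{Q:\,Q'\in\mathcal D(Q)} j(Q)^{-1}$ in $Q'$. The contributing cubes are the \emph{ancestors} of $Q'$: those $Q$ admitting a quasihyperbolic geodesic from $x_0$ that first passes through $Q$ and then through $Q'$. By Remark~\ref{quasi:remark number}, along any single geodesic there are only $O(1)$ cubes at each quasihyperbolic level, but different geodesics through $Q'$ may contribute different ancestors, so the naive estimate picks up a logarithmic factor. The technical work I foresee is to refine the argument—either by replacing the crude supremum over $z\in SH(Q)$ with a specific extremal pair $(z_1^{Q},z_2^{Q})$ approximately realizing the diameter of $SH(Q)$, or by introducing a canonical geodesic per cube so that ancestors become uniquely defined—and show that the resulting ancestor sum is bounded by an absolute constant. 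Once the uniform ancestor estimate is in place, the displayed inequality yields $\sum_Q s(Q)^2\lesssim \int_D k(x,x_0)^2\,dx$.
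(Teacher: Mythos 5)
The paper omits the proof of this lemma, deferring to Jones--Smirnov \cite[p.~275]{JOS}, so there is no in-paper argument to compare against; but the proposal can be assessed on its own merits, and it contains a genuine gap that you yourself flag.

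Your starting inequalities are fine: $s(Q)\lesssim\ell(Q)+\sup_{z\in SH(Q)}\length(\beta_z)$, and the Cauchy--Schwarz bound $\length(\beta_z)^2\lesssim j(Q)^{-1}\sum_{Q'\cap\beta_z\neq\emptyset}\ell(Q')^2 j(Q')^2$ follows exactly as in Lemma~\ref{quasi:lemma length}. The problem arises after the Fubini step, when you must bound the ancestor sum $\sum_{Q:\,Q'\in\mathcal D(Q)}j(Q)^{-1}$. You attribute the logarithmic loss to the multiplicity of geodesics through $Q'$, and propose to remedy it by fixing a canonical geodesic or an extremal pair per cube. This does not help: even along a \emph{single} fixed geodesic from $x_0$ through $Q'$, Remark~\ref{quasi:remark number} gives at least one ancestor cube at essentially every level $j\le j(Q')$, so already $\sum_{j\le j(Q')}j^{-1}\approx\log j(Q')$. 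Thus your argument, pushed through, produces at best $\sum_Q s(Q)^2\lesssim\sum_{Q'}\ell(Q')^2 j(Q')^2\log j(Q')$, which is \emph{not} controlled by $\int_D k(x,x_0)^2\,dx$.

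The underlying reason is that with the weight $j(Q')^{-2}$ borrowed from Lemma~\ref{quasi:lemma length} you are proving a tail Hardy-type inequality at its boundary exponent, where Cauchy--Schwarz loses a logarithm. This is curable by choosing a sub-critical exponent: replacing $j(Q')^{-2}$ by $j(Q')^{-2+\alpha}$ for some $0<\alpha<1$ yields $\length(\beta_z)^2\lesssim j(Q)^{-1+\alpha}\sum_{Q'}\ell(Q')^2 j(Q')^{2-\alpha}$, and then the per-geodesic ancestor sum becomes $\sum_{j\le j(Q')}j^{-1+\alpha}\approx j(Q')^{\alpha}$, which cancels the deficit and recovers $\sum_{Q'}\ell(Q')^2 j(Q')^2$. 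However, this only handles the per-geodesic contribution; one still needs a uniform bound on the \emph{number} of ancestors of $Q'$ at each fixed level, i.e., on how many cubes $Q$ with $j(Q)=j$ have their chosen geodesic passing through $Q'$. That multiplicity bound is the second genuine difficulty — the one you do correctly identify — and neither the extremal-pair nor the canonical-geodesic device obviously yields it, since several geodesics from $x_0$ to $Q'$ can pass through distinct cubes at level $j$. In short: the proposal contains two separate issues (the Hardy-boundary weight, which you did not notice, and the ancestor multiplicity, which you did), and neither is resolved. You should consult the cited proof in \cite{JOS} to see how both are handled.
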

This was proved in \cite[p.~275]{JOS}. In the proof, the authors use the curves mentioned in Remark \ref{quasi:remark curves} instead of the quasihyperbolic geodesics, but the proof remains the same, so we omit it.

\subsection{Detours}

In this subsection, our goal is to  show that any path $\gamma\subset \C$ that intersects $\partial D$ can be modified near $\gamma\cap \partial D$ to obtain a new path $\tilde \gamma$, called a ``detour" path, that intersects $\partial D$ only at finitely many points and has certain properties. Our standing assumption, unless otherwise stated, is that $D$ satisfies the quasihyperbolic condition \eqref{quasi:condition}. We first need a technical lemma.

Two Whitney cubes $Q_1,Q_2\in \mathcal W(D)$ with, say, $\ell(Q_1)\geq \ell(Q_2)$ are \textit{adjacent} if a side of $Q_2$ is contained in a side of $Q_1$. This allows the possibility that $Q_1=Q_2$.

\begin{lemma}\label{quasi:geodesics near boundary}
For every $\varepsilon>0$ and $x\in \partial D$ there exists $r>0$ such that for all points $y\in \br B(x,r)\cap \partial D$ there exist adjacent Whitney cubes $Q_x,Q_y$ with $x\in SH(Q_x)$, $y\in SH(Q_y)$, and $\ell(Q_x)\leq \varepsilon$, $\ell(Q_y)\leq \varepsilon$. Let  $\gamma_x,\gamma_y$ be quasihyperbolic geodesics from $x_0$ to $x,y$, passing through $Q_x,Q_y$, respectively. Also, consider the subpaths $\beta_x,\beta_y$ of $\gamma_x,\gamma_y$ that intersect $Q_x,Q_y$ at only one point and land at $x,y$, respectively. Then $Q_x$ and $Q_y$ can be chosen so that we also have  $\beta_x,\beta_y\subset B(x,\varepsilon)$ and $\beta_x,\beta_y$ intersect only Whitney cubes $Q$ with $\ell(Q)\leq\varepsilon$.
\end{lemma}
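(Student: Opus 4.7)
The plan is to combine the tail-length bound of Lemma~\ref{quasi:lemma length} with the geodesic-compactness statement of Lemma~\ref{quasi:lemmageodesics} via a contradiction argument. First I would fix an integer $j_0 = j_0(\varepsilon) \in \N$ so large that, by Lemma~\ref{quasi:lemma length}, every subpath of a quasihyperbolic geodesic through $x_0$ meeting only Whitney cubes with $j(Q) \geq j_0$ has Euclidean length at most $\varepsilon/100$. Every such cube also satisfies $\ell(Q) \leq \varepsilon$, because the tail of any landing geodesic emanating from $Q$ has length at least $\dist(Q,\partial D) \geq \sqrt{2}\,\ell(Q)$. I would then work at level $j_0 + C_0$ for a large absolute constant $C_0$ chosen to absorb the $O(1)$ losses arising from adjacency of cubes.

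Suppose the conclusion fails. Then there is a sequence $y_n \to x$ in $\partial D$ for which no admissible pair of adjacent cubes exists. Choose quasihyperbolic geodesics $\gamma_{y_n}\colon [0,1]\to \br D$ from $x_0$ to $y_n$, parametrized by rescaled Euclidean arc-length. By the second part of Lemma~\ref{quasi:lemmageodesics}, after extracting a subsequence $\gamma_{y_n}$ converges uniformly to a quasihyperbolic geodesic $\gamma^*$ from $x_0$ to $x$. Since $\gamma^*$ lands at $\partial D$, the function $t\mapsto k(x_0,\gamma^*(t))$ is nondecreasing and tends to infinity as $t\to 1$; hence I can choose $t^*\in (0,1)$ with $\gamma^*(t^*)\in Q^*$ for some Whitney cube $Q^*$ satisfying $j(Q^*)\geq j_0+C_0$ and with $\gamma^*|_{[t^*,1]}$ meeting only cubes of level $\geq j_0+C_0$.

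Setting $\gamma_x := \gamma^*$ and $Q_x := Q^*$, uniform convergence gives, for all sufficiently large $n$, $|\gamma_{y_n}(t^*) - \gamma^*(t^*)| < \ell(Q^*)/10$. Thus $\gamma_{y_n}(t^*)$ lies in $Q^*$ itself or in a Whitney cube sharing a face with $Q^*$; call this cube $Q_y$ and set $\gamma_y := \gamma_{y_n}$. Then $Q_x$ and $Q_y$ are adjacent, and property~(3) of the Whitney decomposition forces $|j(Q_x) - j(Q_y)| = O(1)$. The crucial point is that along any geodesic from $x_0$ the function $t\mapsto k(x_0,\gamma_{y_n}(t))$ is nondecreasing, so every cube met by $\gamma_{y_n}$ after time $t^*$ has level at least $j(Q_y) - O(1) \geq j_0$ once $C_0$ is chosen large enough. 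Lemma~\ref{quasi:lemma length} then bounds $\length(\beta_y) \leq \varepsilon/100$, which combined with $|y_n - x| < \varepsilon/2$ yields $\beta_y\subset B(x,\varepsilon)$. The analogous bound on $\beta_x$ is immediate from the choice of $t^*$, and the cube-size bounds $\ell(Q)\leq \varepsilon$ for cubes met by $\beta_x,\beta_y$ follow from the level lower bounds. This gives an admissible pair, contradicting the standing assumption.

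The main obstacle will be the monotonicity/adjacency bookkeeping: I must verify that once $\gamma_{y_n}$ enters $Q_y$ near time $t^*$, it cannot descend to a low-level cube before landing at $y_n$. This rests on the monotonicity of $k(x_0,\cdot)$ along any geodesic emanating from $x_0$, combined with the observation that adjacent Whitney cubes have $j$-levels differing by a uniform additive constant. With these in hand, the tail estimate of Lemma~\ref{quasi:lemma length} finishes the verification.
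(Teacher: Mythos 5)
Your proposal is correct and takes essentially the same route as the paper's proof: both rely on the tail-length bound of Lemma~\ref{quasi:lemma length} to control $\beta_x,\beta_y$, on the compactness statement of Lemma~\ref{quasi:lemmageodesics} to extract a limit geodesic from a putative bad sequence $y_n\to x$, and on uniform convergence of the geodesics to produce the adjacent cube for $y_n$. The only organizational difference is that the paper fixes the entire finite family of Whitney cubes at a single level $j_0$ whose shadows contain $x$, whereas you pin down one cube $Q^*$ deeper along the limit geodesic and make the monotonicity of $t\mapsto k(x_0,\gamma(t))$ explicit to keep the tails at high level---the same fact the paper uses implicitly in its parenthetical remark that each cube met by $\beta_x^i$ must satisfy $j(Q)\geq j_0$.
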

\begin{proof}
We fix $\varepsilon>0$, $j_0\in \N$, and let $Q_{x}^i$, $i\in I$, be the family of cubes such that $x\in SH(Q_x^i)$ for $i\in I$ and $j(Q_x^i)=j_0$. This is a finite family, contained in $D_{j_0}$, but it also depends on $j_0$. Consider a quasihyperbolic geodesic $\gamma_x^i$ from $x_0$ to $x$ passing through $Q_{x}^i$. By Lemma \ref{quasi:lemma length}, we may choose a sufficiently large $j_0$ so that for each $i\in I$, whenever $\gamma_x^i$ is a geodesic from $x_0$ to $x$ passing through $Q_x^i$, and $\beta_x^i$ is the subpath from $Q_x^i$ to $x$, we have $\length(\beta_x^i)<\varepsilon/2$ (each Whitney cube $Q$ intersected by $\beta_x^i$ must satisfy $j(Q)\geq j_0$). In particular, each point of $\beta_x^i\subset B(x,\varepsilon)$ is very close to $\partial D$, and we may also have (by choosing an even larger $j_0$) that $\beta_x^i$ intersects only Whitney cubes $Q$ with $\ell(Q)\leq \varepsilon$. This also implies that $\ell(Q_x^i)\leq \varepsilon$.

By choosing an even larger $j_0$ we may achieve the same conclusions for all Whitney cubes $Q_y$ adjacent to $Q_x^i$, since they satisfy $j(Q_y)\geq j_0-1$. Namely, if $\gamma_y$ is a quasihyperbolic geodesic from $x_0$ to a point $y\in \partial D$ passing through a cube $Q_y$ adjacent to some $Q_x^i$, $i\in I$, then for the subpath $\beta_y$ of $\gamma_y$ from $Q_y$ to $y$ we have that $\length(\beta_y)<\varepsilon/2 $, and $\beta_y$ intersects only Whitney cubes with $\ell(Q) \leq \varepsilon$.

To finish the proof, we claim that there exists $r>0$ such that if $y\in \br B(x,r)\cap \partial D$, then there exists $i\in I$ and a  Whitney cube $Q_y$, adjacent to $Q_x^i$, such that $y\in SH(Q_y)$. Assume that this fails. Then there exists a sequence $\partial D\ni y_n\to x$ such that for all cubes $Q$ adjacent to $Q_x^i$, $i\in I$, we have $y_n\notin SH(Q)$. Consider a geodesic $\gamma_n$ from $x_0$ to $y_n$. By Lemma \ref{quasi:lemmageodesics}, after passing to a subsequence, $\gamma_n$ converges uniformly to a geodesic $\gamma$ from $x_0$ to $x$. Hence $\gamma$ intersects some cube $Q_x^i$, for some $i\in I$. By uniform convergence, for sufficiently large $n$ we must have that $\gamma_n$ intersects a cube adjacent to $Q_x^i$, a contradiction.
\end{proof}

\begin{lemma}\label{quasi:lemma detours}
Let $\gamma\colon[0,1] \to \mathbb C$ be a simple path connecting two points $a,b\in \mathbb C$. Then for each $\varepsilon>0$ we can find a detour path $\tilde \gamma$ contained in the  $\varepsilon$-neighborhood of $\gamma$, connecting $a$ and $b$, such that
\begin{enumerate}[\upshape(i)]
\item $\tilde \gamma \cap \partial D$ is a finite set,
\item the complementary components of $ D$ intersected by $\tilde \gamma$ are also intersected by $\gamma$, and
\item {if $Q \in \mathcal{W}(D)$ is a Whitney cube satisfying $Q\cap \tilde \gamma \neq \emptyset$}, then either
\begin{align*}
Q\cap \gamma&\neq \emptyset, \quad \textrm{or}\\
SH(Q)\cap \gamma&\neq \emptyset \quad \textrm{and} \quad  \ell(Q)\leq \varepsilon.
\end{align*}
\end{enumerate}
Moreover, if $a$ or $b$ lie on complementary components of $D$ and $\gamma|_{(0,1)}$ does not intersect these components, then the detour $\widetilde \gamma$ may be taken to have the same property.
\end{lemma}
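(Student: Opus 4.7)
The overall strategy is to localize the modification near each point of $\gamma\cap\partial D$ and then patch together finitely many local detours, using the compactness of $\gamma\cap\partial D$ together with Lemma~\ref{quasi:geodesics near boundary}.

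For every $x\in\gamma\cap\partial D$, I would first pick $\varepsilon_x\in(0,\varepsilon]$ small enough that the ball $B(x,\varepsilon_x)$ is contained in the $\varepsilon$-neighborhood of $\gamma$ and meets no complementary component of $D$ that is disjoint from $\gamma$ near $x$ (this secures property (ii), and also the endpoint addendum if $x\neq a,b$). Applying Lemma~\ref{quasi:geodesics near boundary} with this $\varepsilon_x$ yields a radius $r_x>0$ with all the properties stated there. By compactness, $\gamma\cap\partial D$ is covered by finitely many balls $B(x_i,r_{x_i}/2)$, $i=1,\dots,n$; let $V=\bigcup_i B(x_i,r_{x_i}/2)$. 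The preimage $\gamma^{-1}(V)$ is an open neighborhood of the compact set $\gamma^{-1}(\partial D)$, so only finitely many of its connected components $J_1,\dots,J_M$ meet $\gamma^{-1}(\partial D)$; away from $\bigcup_k J_k$, the path $\gamma$ is bounded away from $\partial D$ and will be kept unchanged.

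The core step is constructing a local detour $\widetilde\gamma_k$ on each $J_k$. I would subdivide $\overline{J_k}$ into finitely many closed subintervals $[t_j,t_{j+1}]$ so that each $\gamma([t_j,t_{j+1}])$ either avoids $\partial D$ (and is kept) or is contained in a single ball $B(x_i,r_{x_i})$ with both endpoints $\gamma(t_j),\gamma(t_{j+1})$ in Whitney cubes already intersected by $\gamma$. For a ``bad'' subinterval, pick one boundary hit $y=\gamma(s)\in\partial D\cap B(x_i,r_{x_i})$; Lemma~\ref{quasi:geodesics near boundary} supplies adjacent Whitney cubes $Q_y,Q_{x_i}$, with $y\in SH(Q_y)$ and $x_i\in SH(Q_{x_i})$, together with a geodesic tail $\beta_y\subset B(x_i,\varepsilon_{x_i})$ from $Q_y$ to $y$, passing only through Whitney cubes of sidelength $\leq\varepsilon$. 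The local detour then travels from $\gamma(t_j)$ through Whitney cubes met by $\gamma$ into the cube $Q_y$, follows $\beta_y$ to $y$ and back to $Q_y$ (so that it touches $\partial D$ only at the single point $y$), and continues through Whitney cubes met by $\gamma$ to $\gamma(t_{j+1})$. Concatenating these local detours with the unchanged portions of $\gamma$ yields $\widetilde\gamma$. Property (i) is immediate from the construction; (iii) holds since every Whitney cube met by the detour either meets $\gamma$ (by the setup of the subintervals) or lies on some $\beta_y$ (whence $y\in SH(Q)\cap\gamma$ and $\ell(Q)\leq\varepsilon$); (ii) follows from our choice of $\varepsilon_x$.

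The main technical obstacle is ensuring the ``connecting portion'' of the local detour, going from $\gamma(t_j)$ to the cube $Q_y$, passes only through Whitney cubes already met by $\gamma$, so that condition (iii) is preserved by it. This forces the subdivision $\{t_j\}$ to be fine enough that $\gamma(t_j)$ sits in a small Whitney cube which is either equal to, adjacent to, or connected by a short chain of $\gamma$-visited cubes to $Q_y$. The shrinkage of Whitney-cube sidelengths near $\partial D$, together with continuity of $\gamma$ and the finiteness of cubes at a fixed level $j_0$ from Lemma~\ref{quasi:geodesics near boundary}, makes such a subdivision available; carefully matching these cube structures is the delicate step. The endpoint addendum is handled by the initial choice of $\varepsilon_x$ that avoids any complementary component containing $a$ or $b$ when $x$ is not itself on that component.
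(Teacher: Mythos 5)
Your overall strategy (cover $\gamma\cap\partial D$ by finitely many balls from Lemma~\ref{quasi:geodesics near boundary}, localize the modification inside those balls, keep $\gamma$ unchanged elsewhere) matches the spirit of the paper's proof, but your local-detour construction deviates from it in a way that creates a genuine gap, one that you yourself flag as ``the delicate step'' without resolving.

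Specifically, your local detour is anchored at two interior points $\gamma(t_j),\gamma(t_{j+1})\in D$ and a single boundary hit $y$. You then want to travel from $\gamma(t_j)$ ``through Whitney cubes met by $\gamma$'' into the cube $Q_y$ furnished by Lemma~\ref{quasi:geodesics near boundary}. But $Q_y$ is a cube through which a quasihyperbolic \emph{geodesic from $x_0$} to $y$ passes; it bears no relation to the Whitney cubes that $\gamma$ itself visits on its way to $y$. Even though $\gamma$ and the geodesic both approach $y$, they can do so from opposite sides of a complementary disk, so the small cubes near $y$ that $\gamma$ traverses and the small cubes on the geodesic tail $\beta_y$ need not coincide, be adjacent, or be linked by a chain of $\gamma$-visited cubes. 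Refining the subdivision $\{t_j\}$ does not help: no matter how close $\gamma(t_j)$ is to $y$, the intervening cubes between $\gamma(t_j)$ and $Q_y$ are controlled by neither condition (iii)(a) nor (iii)(b), and there is no mechanism forcing them to be $\gamma$-visited.

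The paper sidesteps this issue entirely by choosing the endpoints of each local detour to lie on $\partial D\cap\gamma$. Concretely, one takes the first entry point $y_1$ and the last exit point $z_1$ of $\gamma$ in $\overline{B_1}\cap\partial D$, and the detour from $y_1$ to $z_1$ is the concatenation of the geodesic tail from $y_1$ up to $Q_{y_1}$, a crossing into the adjacent cube $Q_{x_1}$, the tail down to the center $x_1$, and then back up through $Q_{x_1}$ into $Q_{z_1}$ and down to $z_1$. Every cube touched on this detour lies on a geodesic landing at $y_1$, $x_1$, or $z_1$ (all in $\gamma\cap\partial D$), so its shadow meets $\gamma$ and its sidelength is $\leq\varepsilon$; there is never any need to connect an interior point of $\gamma$ to $Q_y$. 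If you replace your ``single boundary hit $y$ plus out-and-back through $Q_y$'' step by the paper's first-entry/last-exit scheme with a common pivot $x_i$, the rest of your outline (choice of $\varepsilon_x$ for (ii) and the endpoint addendum, compactness of $\gamma\cap\partial D$, verification of (i)--(iii)) goes through.
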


In other words, $\tilde \gamma$ stays arbitrarily close to $\gamma$, connects the same points, it does not intersect any ``new" complementary components of $D$ and it does not intersect any ``new" Whitney cubes, with the exception of some ``small" Whitney cubes whose shadows intersect $\gamma$. Sets like $\partial D$ that admit such ``detours" (with similar properties) were formalized and studied by the first author in \cite{Nt}. The proof of Lemma \ref{quasi:lemma detours} relies crucially on Lemma \ref{quasi:geodesics near boundary}.

\begin{proof}
If $\gamma\cap \partial D=\emptyset$ then the statement is trivial, so we assume that $\gamma\cap \partial D\neq \emptyset$.

We fix $\varepsilon>0$ and for each $x\in \gamma \cap \partial D$ we consider a radius $r_x$ such that the conclusion of Lemma \ref{quasi:geodesics near boundary} is true for points $y\in \br B(x,r_x)\cap \partial D$. Note that $D$ is bounded, by Lemma \ref{quasi:lemma length}. {Hence $\partial D$ is compact}. We cover $\gamma \cap \partial D$ by finitely many balls $B_i\coloneqq B(x_i,r_i)$, $i=1,\dots,N$, where $r_i=r_{x_i}$.

Suppose first that $a,b\notin \partial D$. By choosing possibly smaller balls, we may assume that $a,b\notin B_i$ for all $i\in \{1,\dots,N\}$. We consider the first entry point of $\gamma$ into $\partial D$,  {as one travels along $\gamma$ from $a$ to $b$}. Assume that this point is $y_1\in \overline{B_1}\cap \partial D$, and let $z_1\in \overline{B_1}\cap \partial D$ be the last exit point of $\gamma$ from $\overline{B_1}\cap \partial D$. By Lemma \ref{quasi:geodesics near boundary} we can find paths $\gamma_{y_1},\gamma_{z_1}$ connecting $y_1,z_1$ to $x_1$, respectively, such that both paths intersect only small Whitney cubes of $D$, whose shadow intersects $\gamma$. Also the paths $\gamma_{y_1},\gamma_{z_1}$ are contained in $B(x_1,\varepsilon)$ so they are $\varepsilon$-close to $\gamma$, and they only intersect $\partial D$ at the points $y_1,x_1,z_1$. We set $\tilde \gamma$ to be the subpath of $\gamma$ from $a$ to $y_1$, concatenated with $\gamma_{y_1}$ and $\gamma_{z_1}$.

We now repeat the procedure with $\gamma$ replaced {with} its subpath from $z_1$ to $b$. Note that either $z_1\in B_1$, or $z_1\in \partial B_1$. In the first case, we necessarily have that there exists a point $z_1'\in \gamma$ ``immediately after" $z_1$ with $z_1'\notin \partial D$, so the same argument can be repeated, as in the previous paragraph, with $a$ replaced with $z_1'$.  If $z_1\in \partial B_1\cap \partial D$, then there exists a ball among $B_2,\dots,B_N$ containing $z_1$, since the balls $B_i$, $i\in \{1,\dots,N\}$ cover $\gamma \cap \partial D$. We may assume that this ball is $B_2$, so $z_1\in B_2\cap \partial D= B(x_2,r_2)\cap \partial D$. We then set $y_2=z_1$, and let $z_2\in \overline{B_2}\cap \partial D$ be the last exit point of $\gamma$ from $\overline{B_2}\cap \partial D$. Then we concatenate $\tilde \gamma$ with the paths $\gamma_{y_2}$ and $\gamma_{z_2}$ given by Lemma \ref{quasi:geodesics near boundary}, which connect $y_2$ and $z_2$ to $x_2$, respectively. One continues in this way to obtain a path from $a$ to $b$.

The cases $a\in \partial D$ or $b\in \partial D$ can be treated with a similar argument. Namely. if $a\in \partial D$, then one can use the same argument with $y_1$ replaced with $a$. The last statement in the lemma is ensured by our construction, since the detours intersect no more points of $\partial D$ than $\gamma$ does.
\end{proof}

The next statement holds in general and is independent of the quasihyperbolic condition \eqref{quasi:condition}.

\begin{lemma}\label{quasi:detours2}
Let $\gamma\colon [0,1]\to \C$ be a simple path with endpoints $a,b\in \br D$, $a\neq b$, such that $\gamma|_{(0,1)}$ does not intersect the complementary components of $D$ that possibly contain $a$ or $b$ in their boundary. Moreover, suppose that $\gamma|_{(0,1)}$ intersects finitely many complementary components $B_i$, $i\in I$, of $D$. Then there exist finitely many subpaths $\gamma_1,\dots,\gamma_m$ of $\gamma$ with the following properties:
\begin{enumerate}[\upshape(i)]
\item $\gamma_i$ is contained in $D$, except possibly for its endpoints, for each $i\in \{1,\dots,m\}$,
\item $\gamma_i$ and $\gamma_j$ intersect disjoint sets of Whitney cubes $Q\in \mathcal W(D)$ for $i\neq j$, with the exception of the endpoints, which could lie on the same cube (see \textup{(A1)} below),
\item $\gamma_1$ starts at $a_1=a$, $\gamma_m$ terminates at $b_m=b$, and in general the path $\gamma_i$ starts at $a_i$ and terminates at $b_i$ such that for each $i\in \{1,\dots,m-1\}$ we either have
\begin{enumerate}
\item[\upshape(A1)] $b_i,a_{i+1}\in \partial Q_{j_i}$ for some $Q_{j_i}\in \mathcal W(D)$, i.e., $\gamma_i$ and $\gamma_{i+1}$ have an endpoint on some Whitney cube $\partial Q_{j_i}$, or
\item[\upshape(A2)] $b_i,a_{i+1}\in \partial B_{j_i}$ for some $j_i\in I$, i.e., $\gamma_i$ and $\gamma_{i+1}$ have an endpoint on some set $\partial B_{j_i}$.
\end{enumerate}
The cubes $Q_{j_i}$ from the first alternative are distinct and the complementary components $B_{j_i}$ from the second alternative are also distinct.
\end{enumerate}
\end{lemma}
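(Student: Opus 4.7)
The plan is to construct the decomposition in two stages. I will first handle transitions through complementary components (alternative~(A2)), and then resolve Whitney cube overlaps (alternative~(A1)).

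\emph{Stage 1 (Greedy $B$-peeling).} For each $k \in I$ I set $t_k^- := \min\gamma^{-1}(B_{i_k})$ and $t_k^+ := \max\gamma^{-1}(B_{i_k})$, and walk along $\gamma$. Starting from $u_1 = 0$, I inductively define $v_i$ to be the first time in $(u_i, 1]$ at which $\gamma$ enters some complementary component $B_{k_i}$ (or $v_i = 1$ if no such time exists), and then $u_{i+1} := t_{k_i}^+$. Jumping past $t_{k_i}^+$ means that $B_{k_i}$ is never revisited again, so the procedure terminates after at most $|I| + 1$ steps and the components $B_{k_i}$ are automatically pairwise distinct. By construction the subpaths $\gamma_i^{(0)} := \gamma|_{[u_i, v_i]}$ lie in $D$ except at their endpoints, giving (i), and consecutive subpaths satisfy (A2) with the distinct components $B_{k_i}$.

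\emph{Stage 2 (Resolving Whitney cube overlaps).} The subpaths from Stage~1 can still fail~(ii). I call a Whitney cube $Q$ an \emph{interior-conflict cube} if the interiors of at least two surviving subpaths both meet $Q$, and I set $\alpha(Q) := \min\{t \in [0,1] : \gamma(t) \in Q\}$. Iteratively, I pick an interior-conflict cube $Q^*$ minimizing $\alpha(Q^*)$, and let $i_0 \leq j_0$ be respectively the smallest and largest indices of surviving subpaths whose interiors meet $Q^*$. Setting $\beta := \max\{t \in [u_{i_0}, v_{j_0}] : \gamma(t) \in Q^*\}$, both $\gamma(\alpha(Q^*))$ and $\gamma(\beta)$ lie on $\partial Q^*$, because the endpoints of the affected subpaths are on $\partial D$ or equal to $a, b$, all disjoint from $Q^* \subset D$. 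I then replace the block $\gamma_{i_0}^{(0)}, \ldots, \gamma_{j_0}^{(0)}$ by the pair $\gamma|_{[u_{i_0}, \alpha(Q^*)]}$ and $\gamma|_{[\beta, v_{j_0}]}$, joined by an (A1)-transition through $Q^*$. The intermediate subpaths and their $B$-transitions are absorbed into the resulting gap, and the surviving $B$-transitions remain pairwise distinct.

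\emph{Termination and main obstacle.} The critical point of the minimum-$\alpha$ rule is that no new interior-conflict cubes can arise: any cube $Q'$ shared in the interiors of two surviving subpaths after the merge would, via the containment of the new subpaths in $\gamma_{i_0}^{(0)}$ and $\gamma_{j_0}^{(0)}$, correspond to an interior-conflict pair of original subpaths, and the existence of a visit-time $t < \alpha(Q^*)$ to $Q'$ in one of the new subpaths would force $\alpha(Q') < \alpha(Q^*)$, contradicting the minimality of $\alpha(Q^*)$. A similar argument shows that the new subpaths do not create interior conflicts with the untouched $\gamma_k^{(0)}$ for $k < i_0$ or $k > j_0$. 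Hence each iteration strictly decreases the number of conflict pairs by removing the pair $(i_0,j_0)$ without introducing new ones, and since that number is bounded initially by $\binom{|I|+1}{2}$, Stage~2 terminates after finitely many steps. The cubes $Q^*$ used in successive merges are automatically pairwise distinct, as each ceases to be an interior-conflict cube immediately after its iteration. The main difficulty is to verify the minimum-$\alpha$ argument cleanly in all the relevant configurations, including the degenerate case where a visit point $\gamma(\alpha(Q^*))$ lies on a shared face of several Whitney cubes, which I handle by a consistent tie-breaking rule.
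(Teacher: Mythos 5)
The paper itself does not prove this lemma but cites Lemma~2.30 of \cite{Nt:potential}, describing the argument only as ``an appropriate algorithm to cut the path $\gamma$ into the desired subpaths.'' Your two-stage greedy construction is therefore a new attempt, and Stage~1 (the $B$-peeling) is correct as written: the times $u_i$, $v_i$ are well-defined, the $B_{k_i}$ are pairwise distinct because $u_{i+1} = t_{k_i}^+$ is the last visit to $B_{k_i}$, the subpaths $\gamma|_{[u_i,v_i]}$ have interior in $D$ by definition of $v_i$, and continuity places $\gamma(v_i)$ and $\gamma(u_{i+1})$ on $\partial B_{k_i}$, yielding (A2).

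Stage~2, however, has a genuine gap in the definition of $\alpha(Q)$. You set $\alpha(Q) := \min\{t \in [0,1] : \gamma(t) \in Q\}$, a minimum over \emph{all} of $[0,1]$, but the intervals $(v_l, u_{l+1})$ that were discarded in Stage~1 are precisely where $\gamma$ wanders through $\C\setminus D$, and on such an excursion $\gamma$ may well pass through $D$ again and visit the cube $Q^*$. In that case $\alpha(Q^*) < u_{i_0}$ and the proposed new subpath $\gamma|_{[u_{i_0},\alpha(Q^*)]}$ is vacuous or ill-defined; moreover the inequality $\alpha(Q') \geq \alpha(Q^*)$ coming from minimality compares the wrong quantities. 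The repair is to take the minimum over the union of the \emph{surviving} parameter intervals $\bigcup_i[u_i,v_i]$ (equivalently, the first hitting time of $Q$ inside $\gamma_{i_0}^{(0)}$), which guarantees $\alpha(Q^*)\in(u_{i_0},v_{i_0})$ because $i_0$ is minimal and the interval endpoints lie on $\partial D$, hence off $Q^*\subset D$.

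A second, more minor imprecision: the assertion that ``the new subpaths do not create interior conflicts with the untouched $\gamma_k^{(0)}$'' is not true for $\gamma_{\text{right}}$ and $k>j_0$. Since the interior of $\gamma_{\text{right}}$ sits inside that of $\gamma_{j_0}^{(0)}$, any old conflict between $\gamma_{j_0}^{(0)}$ and a later $\gamma_k^{(0)}$ can persist; what your minimality rule actually yields (with the corrected $\alpha$) is that $\gamma_{\text{left}}$ inherits \emph{no} conflicts, that no surviving conflict involves an index $< i_0$, and that each post-merge conflict pair injects into a pre-merge conflict pair with $(i_0,j_0)$ omitted from the image. That injective-with-a-miss statement is what makes the count of conflict pairs strictly decrease and gives termination; ``without introducing new ones'' undersells what needs to be checked. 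Finally, you flag but do not handle the face-sharing tie-break (and the edge case $a\in D$ or $b\in D$ lying inside $Q^*$, which can put $\gamma(\alpha(Q^*))$ or $\gamma(\beta)$ off $\partial Q^*$); these need at least a sentence each for the argument to close.
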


We remark that some of the subpaths $\gamma_i$ of $\gamma$ might be constant. This lemma appears, in a slightly modified version, in \cite{Nt:potential} as Lemma 2.30, where the ``peripheral disks" in the statement there, for our purpose, are replaced with ``complementary components of $D$ and Whitney cubes $Q\in \mathcal W(D)$". The proof is elementary and uses an appropriate algorithm to cut the path $\gamma$ into the desired subpaths.

If the second alternative (A2) occurs for each $\gamma_i$ in Lemma \ref{quasi:detours2}, then we will call the paths $\gamma_i$ and the complementary components $B_{j_i}$ a transboundary chain. More precisely:

\begin{definition}
\label{chain}
Let $D \subset \mathbb{C}$ be a domain. Let $\mathcal{C}\coloneqq(\gamma_1, B_1, \dots, \gamma_{m-1}, B_{m-1}, \gamma_{m})$, $m \geq 1$, be a collection of paths $\gamma_i$ in $\br D$ and complementary components $B_i$ of $D$; if $m=1$ then $\mathcal C\coloneqq (\gamma_1)$. The collection $\mathcal C$ is called a \textit{transboundary chain} of $D$ if
\begin{enumerate}
\item each path $\gamma_i$ is contained in $D$, except possibly for its endpoints $a_i,b_i$, and $a_i\neq b_i$,
\item each $B_i$ is a complementary component of $D$, and the components $B_i$ are all distinct, and
\item for each $i\in \{1,\dots,m-1\}$ we have $b_i, a_{i+1} \in \partial B_i$.
\end{enumerate}
The points $a_1$,$b_{m}$ are called the \textit{endpoints} of the transboundary chain $\mathcal C$ and they might lie in $D$, but we also allow the possibility that $a_1$ or $b_{m}$ belong to $\partial B_0$ or $\partial B_{m}$, where $B_0$ or $B_{m}$ are complementary components of $D$ such that $B_0, B_1, \dots, B_{m-1}$ or $B_1,\dots,B_m$ are all distinct, respectively. In this case, we can add $B_0$ or $B_{m}$ to  the chain in the obvious way and obtain a transboundary chain of the form $(B_0,\gamma_1, \dots,\gamma_{m})$ or $(\gamma_1, \dots,\gamma_{m},B_m)$, respectively.
\end{definition}

With this definition, if we combine the preceding two lemmas we have:

\begin{corollary}\label{quasi:detours:cor}
Let $\gamma\colon [0,1]\to \C$ be a simple path with endpoints $a,b\in \br D$, $a\neq b$, such that $\gamma|_{(0,1)}$ does not intersect the complementary components of $D$ that possibly contain $a$ or $b$ in their boundary. Then for each $\varepsilon>0$ there exist paths $\gamma_1,\dots, \gamma_{m}$ and (if $m\geq 2$) complementary components $B_1, \dots, B_{m-1}$ of $D$ with the following properties:
\begin{enumerate}[\upshape(i)]
\item $\gamma_1$ starts at $a$ and $\gamma_m$ terminates at $b$,
\item each $B_i$ is intersected by $\gamma |_{(0,1)}$,
\item $(\gamma_1, B_1, \dots, \gamma_{m-1}, B_{m-1}, \gamma_{m})$ is a transboundary chain of $D$,
\item $\gamma_i$ and $\gamma_j$ intersect disjoint sets of Whitney cubes $Q\in \mathcal W(D)$ for $i\neq j$,
\item if $Q \in \mathcal{W}(D)$ is a Whitney cube with $Q\cap\gamma_i \neq \emptyset$, then either
\begin{align*}
Q\cap \gamma&\neq \emptyset, \quad \textrm{or}\\
SH(Q)\cap \gamma&\neq \emptyset \quad \textrm{and} \quad  \ell(Q)\leq \varepsilon,
\end{align*}
\item if $Q \in \mathcal{W}(D)$ is a Whitney cube with $Q\cap\gamma_i \neq \emptyset$, then $Q\cap \gamma_i$ is contained in the union of two line segments.
\end{enumerate}

\end{corollary}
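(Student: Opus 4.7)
The plan is to combine Lemmas \ref{quasi:lemma detours} and \ref{quasi:detours2} and then post-process the resulting decomposition to obtain a transboundary chain whose subpaths are locally straight inside each Whitney cube. First, I would apply Lemma \ref{quasi:lemma detours} to $\gamma$ with the given parameter $\varepsilon$ to produce a detour path $\tilde\gamma$ from $a$ to $b$ in the $\varepsilon$-neighborhood of $\gamma$, with $\tilde\gamma\cap\partial D$ finite, meeting only complementary components of $D$ that are met by $\gamma$, and satisfying the Whitney cube condition (iii) of that lemma. The ``moreover'' clause of Lemma \ref{quasi:lemma detours} ensures that $\tilde\gamma|_{(0,1)}$ still avoids the complementary components of $D$ that contain $a$ or $b$ on their boundary. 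Since $\tilde\gamma\cap\partial D$ is finite, $\tilde\gamma$ meets only finitely many complementary components of $D$, so the hypotheses of Lemma \ref{quasi:detours2} apply to $\tilde\gamma$. Invoking that lemma yields subpaths $\gamma_1',\dots,\gamma_{m'}'$ of $\tilde\gamma$ in which consecutive pieces share an endpoint either on a common Whitney cube (alternative (A1)) or on the boundary of a common complementary component (alternative (A2)).

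To obtain the transboundary-chain structure of Definition \ref{chain}, I would eliminate every (A1) junction by a local merge. Whenever $\gamma_i'$ ends at $b_i'\in\partial Q_{j_i}$ and $\gamma_{i+1}'$ starts at $a_{i+1}'\in\partial Q_{j_i}$, I pick a straight line segment $\sigma_i\subset Q_{j_i}$ from $b_i'$ to $a_{i+1}'$; by Lemma \ref{quasi:detours2}(ii), both $\gamma_i'$ and $\gamma_{i+1}'$ touch $Q_{j_i}$ only at these endpoints, so the concatenation $\gamma_i'\ast\sigma_i\ast\gamma_{i+1}'$ is a simple path in $\br D$. Iterating at every (A1) junction and re-indexing produces subpaths $\gamma_1,\dots,\gamma_m$ joined only through distinct complementary components $B_1,\dots,B_{m-1}$, i.e., a transboundary chain. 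Properties (i), (iii), and (iv) are immediate from the construction: the distinctness of the $Q_{j_i}$ given by Lemma \ref{quasi:detours2}(iii) guarantees that no cube is used for two merges, so the disjointness (iv) is preserved. Property (ii) follows because each $B_i$ is met by $\tilde\gamma|_{(0,1)}$, hence by $\gamma$ via Lemma \ref{quasi:lemma detours}(ii), and therefore by $\gamma|_{(0,1)}$ since $\gamma$ does not cross complementary components containing $a$ or $b$ on their boundary. For (v), every Whitney cube now touched by some $\gamma_i$ is either already touched by $\tilde\gamma$ (and satisfies (v) by Lemma \ref{quasi:lemma detours}(iii)) or is one of the merged cubes $Q_{j_i}$, which was also touched by $\tilde\gamma$ at the shared endpoints $b_i', a_{i+1}'$ and therefore satisfies (v) as well.

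Finally, I would enforce property (vi) by straightening. For each $\gamma_i$ and each Whitney cube $Q$ with $Q\cap\gamma_i\neq\emptyset$, I replace $\gamma_i\cap Q$ by the straight line segment between the entry and exit points on $\partial Q$; if an endpoint of $\gamma_i$ lies in the interior of $Q$, I use two segments meeting at that endpoint instead. Since $Q\subset D$, the modification takes place inside $D$ and leaves unchanged the set of Whitney cubes touched by $\gamma_i$, so properties (i)--(v) are preserved; by the disjointness (iv) the straightenings in different subpaths $\gamma_j$ do not interfere with one another.

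The main obstacle I anticipate is the combinatorial bookkeeping: one must verify carefully that each local merge through $Q_{j_i}$ and each straightening inside a cube keep the resulting $\gamma_i$ simple, that the $B_1,\dots,B_{m-1}$ remain pairwise distinct after re-indexing, and that the endpoint cases $a\in\partial D$ or $b\in\partial D$ are handled so that no spurious complementary component is introduced at the start or end of the chain. These verifications are routine consequences of the disjointness guarantees in Lemma \ref{quasi:detours2} and the last clause of Lemma \ref{quasi:lemma detours}, but they must be carried out systematically.
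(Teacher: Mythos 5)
Your construction of the transboundary chain (steps producing (i)--(v)) matches the paper's: apply Lemma~\ref{quasi:lemma detours} to get the detour $\tilde\gamma$, decompose via Lemma~\ref{quasi:detours2}, and merge (A1) junctions by a segment inside the shared Whitney cube, using the distinctness of the cubes $Q_{j_i}$ to preserve the disjointness in (iv). That part is essentially the paper's argument.

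The gap is in your treatment of (vi). You propose to ``replace $\gamma_i\cap Q$ by the straight line segment between the entry and exit points on $\partial Q$,'' but $\gamma_i\cap Q$ may have many connected components: $\gamma_i$ can enter and leave a Whitney cube $Q$ arbitrarily often. If you straighten each maximal arc in $Q$ separately, continuity is preserved but $Q\cap\gamma_i$ becomes a union of as many segments as there are visits, so (vi) still fails. If instead you connect the \emph{first} entry to the \emph{last} exit by a single chord, you discard the intermediate portion of $\gamma_i$, including its excursions into other cubes, and when you apply the same recipe to a cube $Q'$ visited during that discarded portion the resulting pieces no longer concatenate into a continuous path. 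Either reading of your step leaves (vi) unproved. An additional, smaller issue: a chord of $Q$ joining two boundary points can run along a side of $Q$ and thereby touch neighbouring Whitney cubes that $\gamma_i$ never met, which would threaten (iv) and (v); your claim that the straightening ``leaves unchanged the set of Whitney cubes touched by $\gamma_i$'' is not justified.

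The paper avoids all of this by not straightening $\gamma_i$ locally at all. Instead it \emph{replaces} $\gamma_i$ wholesale by a new piecewise-linear path $\Gamma_i$: it extracts from $\gamma_i$ a chain of pairwise adjacent Whitney cubes $Q_0,Q_1,\dots$ (following last-exit points, so the chain marches monotonically towards $b_i$, and symmetrically towards $a_i$), truncates the chain so no cube repeats, and then joins the centres of consecutive cubes by segments. Because each cube $\widetilde Q_n$ occurs once, $\Gamma_i\cap\widetilde Q_n$ consists of at most the two half-segments to its neighbouring centres, giving (vi); and $\Gamma_i$ visits only cubes already visited by $\gamma_i$, so (iv) and (v) are preserved. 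You would need an argument of this kind, controlling repeated visits globally along $\gamma_i$ rather than cube by cube, to close the gap.
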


The important properties of the paths $\gamma_i$ are that they intersect disjoint sets of Whitney cubes, all of which are intersected by $\gamma$, with the exception of some ``small" cubes whose shadow intersects $\gamma$.

\begin{remark}\label{quasi:detours:cor:rem}
If $a\in \partial D$, then we may consider the component $B_0$ of $\C\setminus D$ that contains $a$ and obtain a transboundary chain of the form $(B_0,\gamma_1,\dots,\gamma_m)$ in the above corollary. Similarly, if $b\in B_m$, we obtain a chain of the form $(\gamma_1,\dots,\gamma_m,B_m)$. Note that $B_0$ and $B_m$ are necessarily distinct from $B_1,\dots,B_{m-1}$ because $\gamma|_{(0,1)}$ does not intersect $B_0$ or $B_m$, by assumption.

\end{remark}

\begin{proof}
We first apply Lemma \ref{quasi:lemma detours} to obtain a detour path $\widetilde \gamma$. Then we apply Lemma \ref{quasi:detours2} to $\widetilde \gamma$ to obtain paths $\widetilde\gamma_1,\dots,\widetilde \gamma_M$. If the alternative (A1) in Lemma \ref{quasi:detours2} occurs for two paths $\widetilde \gamma_i,\widetilde \gamma_{i+1}$, then we connect their endpoints that lie on the boundary of a common Whitney cube with a line segment in that cube. In this way we obtain a new family of paths. We repeat this procedure, until no two paths have endpoints on the same cube. This gives us a collection of paths $\gamma_1,\dots, \gamma_{m}$ that intersect disjoint sets of Whitney cubes, as required in (iv); it is important here that the cubes in which we add the line segments are distinct, as provided by Lemma \ref{quasi:detours2}(iii).

We note that (v) holds by Lemma \ref{quasi:lemma detours}(iii), since the paths $\widetilde \gamma_i$ are subpaths of $\widetilde \gamma$, and we only perform concatenations inside cubes that are also intersected by $\widetilde \gamma$. Moreover, (i), (ii), and (iii) follow from Lemma \ref{quasi:detours2}(i) and (iii), since we have eliminated the first alternative (A1). We remark that in order to have a transboundary chain, it is required in the definition that if the endpoints $a$ of $\gamma_1$ or $b$ of $\gamma_m$ lie in complementary components of $D$, then each of these components has to be distinct from $B_1,\dots,B_{m-1}$; this is guaranteed by our assumptions as noted in Remark \ref{quasi:detours:cor:rem}.

It remains to modify the paths $\gamma_i$ so that they satisfy (vi). Since the paths $\gamma_i$ intersect disjoint sets of Whitney cubes, we can do the modifications individually in each $\gamma_i$. Suppose that $\gamma_i$ is a path both of whose endpoints $a_i,b_i$ lie on complementary components of $D$, but otherwise $\gamma_i$ is contained in $D$; since $\gamma_i$ is part of a transboundary chain, we must have $a_i\neq b_i$.

Moreover, suppose that $\gamma_i\colon [0,1]\to \C$ is parametrized so that it runs from $a_i$ to $b_i$; that is $\gamma_i(0)=a_i$ and $\gamma_i(1)=b_i$. Let $Q_0$ be any Whitney cube intersected by $\gamma_i$. There exists a point $\gamma_i(t_0)=z_0\in \partial Q_0$ that is the last exit point of $\gamma_i$ from  $Q_0$, i.e., the path $\gamma_i|_{(t_0,1)}$ does not intersect the cube $Q_0$. Hence, there exists $t>t_0$ arbitrarily close to $t_0$ so that $\gamma_i(t)$ intersects a cube $Q_1$, adjacent to $Q_0$. We let $\gamma(t_1)=z_1$, $t_1>t_0$, be the last exit point  of $\gamma_i$ from $Q_1$. Continuing in this way, we obtain a chain of distinct adjacent cubes $Q_0,Q_1,\dots$ such that $Q_n \to b_i$ as $n\to \infty$. In the same way we obtain a chain of distinct adjacent cubes $Q_0,Q_{-1},\dots$ such that $Q_{n}\to a_i$ as $n\to-\infty$. We note that a cube $Q\in \mathcal W(D)$ might appear twice in the sequence $Q_n$, $n\in \mathbb Z$, but this can only be the case for finitely many cubes, because $Q_n\to a_i$ as $n\to -\infty$ and $Q_n\to b_i$ as $n\to\infty$. We truncate the chain $Q_n$ to obtain a new chain $\widetilde Q_n$, $n\in \Z$, that has the above properties, but no cube appears twice. Then we connect the centers of adjacent cubes $\widetilde Q_n$ with line segments and this gives us a path ${\Gamma_i}$ connecting $a_i$ to $b_i$. The path $\Gamma_i$ is the desired path that satisfies (vi), and also all the other required properties of the corollary, since it only intersects Whitney cubes that are also intersected by the original path $\gamma_i$.

If $i=1$, then $\gamma_1$ starts at $a_1=a$, which might lie in the interior of $D$. One can easily modify the above argument by connecting $a$ to the center of the Whitney cube that possibly contains it and obtain the path $\Gamma_1$ as in (vi).  The same holds for $\gamma_{m}$, which might terminate in a point in $D$.
\end{proof}

\subsection{Absolute continuity lemmas}\label{ac:section}
This subsection contains some absolute continuity results that will be crucial for the proof of the main theorem.

Let $D\subset \C$ be an open set. A  function $f\colon D \to \C$ lies in the Sobolev space $W^{1,2}(D)$ if $f\in L^2(D)$ and $f$ has weak derivatives of first order that lie in $L^2(D)$. In particular, if $f\colon D\to f(D)\subset \C$ is a conformal map with bounded domain and range, then $f\in W^{1,2}(D)$, since
\begin{align*}
\int_D |f'|^2 = \area(f(D))<\infty.
\end{align*}

\begin{proposition}\label{ac:lemma Sobolev}
Suppose that {the domain} $D\subset \C$ satisfies \eqref{quasi:condition}, and let $f\colon D\to \C$ be a {continuous} function in $W^{1,2}(D)$ that extends continuously to $\br D$. Also, let $x\in \C$ be arbitrary, and denote by $\gamma_r(t)=x+re^{it}$, $t\in [0,2\pi]$, the circular path around $x$ at distance $r$. Then for a.e.\ $r\in (0,\infty)$ we have
\begin{align*}
\mathcal H^1( f(\gamma_r\cap \partial D))=0.
\end{align*}
\end{proposition}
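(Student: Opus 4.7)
The plan is to bound $\mathcal H^1(f(\gamma_r\cap \partial D))$ by covering $\gamma_r\cap\partial D$ with shadows of Whitney cubes and integrating the resulting bound over $r$ via Fubini. First, by Fubini applied in polar coordinates centered at $x$, for a.e.\ $r>0$ the restriction of $f$ to each arc of $\gamma_r \cap D$ is absolutely continuous with $\int_{\gamma_r \cap D}|\nabla f|^2\,ds<\infty$; I restrict attention to such good radii.

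For each $\eta > 0$, set $\mathcal F_\eta\coloneqq\{Q\in \mathcal W(D): s(Q)\leq\eta\}$. By Lemma \ref{quasi:lemmageodesics} every $z\in\partial D$ is the landing point of a quasihyperbolic geodesic, and since $\sum_Q s(Q)^2<\infty$ by Lemma \ref{quasi:shadow sum}, only finitely many cubes have $s(Q)>\eta$; hence every geodesic landing at $z$ eventually passes through a cube in $\mathcal F_\eta$ whose shadow contains $z$, and $\{SH(Q)\}_{Q\in\mathcal F_\eta}$ covers $\partial D$. Writing $\omega_f$ for the modulus of continuity of $f$ on the compact set $\overline D$, we have $\diam(f(SH(Q)))\leq \omega_f(s(Q))$, so
\begin{align*}
\mathcal H^1_{\omega_f(\eta)}(f(\gamma_r\cap\partial D))\leq \sum_{\substack{Q\in \mathcal F_\eta\\ SH(Q)\cap \gamma_r\neq\emptyset}}\diam(f(SH(Q))).
\end{align*}
Since $\{r:SH(Q)\cap \gamma_r\neq\emptyset\}\subseteq\{|y-x|:y\in SH(Q)\}$ has Lebesgue measure at most $s(Q)$, integrating over $r$ and applying Cauchy--Schwarz together with Lemma \ref{quasi:shadow sum} gives
\begin{align*}
\int_0^\infty \mathcal H^1_{\omega_f(\eta)}(f(\gamma_r\cap \partial D))\,dr\leq \Bigl(\sum_{Q\in \mathcal F_\eta}\diam(f(SH(Q)))^2\Bigr)^{1/2}\Bigl(\sum_{Q\in \mathcal F_\eta}s(Q)^2\Bigr)^{1/2}.
\end{align*}

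The second factor vanishes as $\eta\to 0$, since it is the tail of a convergent sum. The main obstacle is to show that the first factor remains uniformly bounded in $\eta$. My plan here is to upgrade the modulus-of-continuity estimate to a Sobolev-type oscillation bound: using the length estimate $\lesssim j(Q)^{-1/2}$ on geodesics from $Q$ to $SH(Q)$ (Lemma \ref{quasi:lemma length}) combined with a Cauchy--Schwarz estimate against $|\nabla f|$ over geodesic tubes of bounded overlap, one expects $\diam(f(SH(Q)))^2\lesssim \int_{V_Q}|\nabla f|^2\,dA$ for suitable neighborhoods $V_Q\subset D$, and then summing would yield $\sum_Q \diam(f(SH(Q)))^2\lesssim \int_D|\nabla f|^2\,dA<\infty$. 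Since $\mathcal H^1=\lim_{\delta\to 0}\mathcal H^1_\delta$, Fatou's lemma would then deliver $\mathcal H^1(f(\gamma_r\cap \partial D))=0$ for a.e.\ $r$.
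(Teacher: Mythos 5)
The setup through the Cauchy--Schwarz step is sound, and you correctly identify the crux: one would need $\sum_{Q\in\mathcal W(D)}\diam(f(SH(Q)))^2<\infty$ (the first factor is monotone in $\eta$, so ``uniformly bounded'' is equivalent to finiteness of the full sum). Unfortunately this is false in general, so the sketched plan cannot be completed. Take $D=\mathbb D$, $x_0=0$, where quasihyperbolic geodesics from $0$ are radial and the shadows $SH(Q)$ are comparable to dyadic arcs. Choose disjointly supported continuous bumps $\phi_k$ on $\partial\mathbb D$ at scales $2^{-k^2}$ with $\|\phi_k\|_\infty=1$ (the $H^{1/2}$-norm of such a bump is scale-invariant, $\simeq 1$), place them with mutual separation $\gtrsim 2^{-k^2/2}$, and set $g\coloneqq\sum_k k^{-1}\phi_k$. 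Then $g$ is continuous, $\|g\|_{H^{1/2}}^2\lesssim\sum_k k^{-2}<\infty$, so the harmonic extension $f$ lies in $W^{1,2}(\mathbb D)$ and extends continuously to $\overline{\mathbb D}$; yet each bump contributes oscillation $\geq k^{-1}$ on $\sim k^2$ nested dyadic arcs, so $\sum_Q(\operatorname{osc}_{SH(Q)}g)^2\gtrsim\sum_k k^{-2}\cdot k^2=\infty$. Thus the quantity you want to bound by $\int_D|\nabla f|^2$ is genuinely infinite for an admissible $f$.

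The failure is also visible in the mechanism you propose. Writing the oscillation along a geodesic chain through $Q$ via Poincar\'e gives $\diam(f(SH(Q)))\lesssim\sum_{Q'\in V_Q}\bigl(\int_{Q'}|\nabla f|^2\bigr)^{1/2}$ over an \emph{infinite} chain of cubes, and no choice of Cauchy--Schwarz weight closes the estimate: the geometric bound $\sum_{Q'\in V_Q}\ell(Q')\lesssim j(Q)^{-1/2}$ from Lemma \ref{quasi:lemma length} is not enough, and the tubes $V_Q$ do not have bounded overlap --- a cube $Q'$ at quasihyperbolic level $j(Q')$ near $\partial D$ lies downstream of on the order of $j(Q')$ cubes $Q$, so summing in $Q$ produces an uncontrolled factor $j(Q')^{\text{positive}}$ against $\int_{Q'}|\nabla f|^2$.

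There is also a structural tell: you record at the outset that for a.e.\ $r$ the restriction of $f$ to $\gamma_r\cap D$ is absolutely continuous with $|\nabla f|\in L^2(\gamma_r\cap D)$, and then never use this. That absolute continuity along the slice is in fact the load-bearing ingredient in the argument the paper points to (the detour approach of Lemma \ref{quasi:lemma detours}, following Jones--Smirnov \cite{JOS} and \cite{Nt}): instead of covering $\gamma_r\cap\partial D$ by shadows and bounding each $\diam(f(SH(Q)))$ by an area-Sobolev oscillation, one replaces $\gamma_r$ by a detour that stays in $D$ except at finitely many points and passes only through cubes that either meet $\gamma_r$ or are small with shadow meeting $\gamma_r$, and then controls the image using the $L^1$-line integral of $|\nabla f|$ along $\gamma_r$ together with the shadow-sum estimate of Lemma \ref{quasi:shadow sum}. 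The covering-by-shadows reduction you set up is a reasonable first move, but the jump to a purely area-based oscillation bound on $SH(Q)$ is where the proof breaks.
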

This proposition is a variant of \cite[Proposition 5.3]{Nt}, but the proof is almost identical, and is based on the detours given by Lemma \ref{quasi:lemma detours}. In fact, the statement and proof date back to the original work of Jones and Smirnov in \cite[Proposition 1]{JOS}.

\begin{corollary}\label{ac:zero measure}
Suppose that $D\subset \C$ satisfies \eqref{quasi:condition}. Then $\area(\partial D)=0$.
\end{corollary}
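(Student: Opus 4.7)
The plan is to deduce this as an almost immediate consequence of Proposition \ref{ac:lemma Sobolev}, applied to the identity map. First I would observe that by Lemma \ref{quasi:lemma length} the domain $D$ is bounded, so the identity map $f(z)=z$ belongs to $W^{1,2}(D)$ (both $z$ and its partial derivatives, which are constants, lie in $L^2$ of a bounded domain) and of course extends continuously to $\overline{D}$.

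With this setup, Proposition \ref{ac:lemma Sobolev} applied to $f=\id$ and to an arbitrarily fixed center, say $x=0$, yields that for almost every $r\in(0,\infty)$ one has
\[
\mathcal{H}^1(\gamma_r\cap \partial D)=0,
\]
where $\gamma_r$ is the circle of radius $r$ about the origin (since $f$ is the identity, the image of $\gamma_r\cap \partial D$ is just $\gamma_r\cap \partial D$ itself).

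To conclude, I would pass from these one-dimensional estimates to a two-dimensional estimate by Fubini's theorem in polar coordinates. Writing
\[
\area(\partial D)=\int_0^\infty\int_0^{2\pi}\mathbf{1}_{\partial D}(re^{it})\,r\,dt\,dr=\int_0^\infty \mathcal{H}^1(\gamma_r\cap \partial D)\,dr,
\]
the right-hand side vanishes because the integrand is zero for a.e.\ $r$, giving $\area(\partial D)=0$. There is no real obstacle here; the whole point of the corollary is to record this polar-integration consequence of the boundary absolute continuity encoded in Proposition \ref{ac:lemma Sobolev}, and the only ingredient beyond that proposition is the boundedness of $D$ from Lemma \ref{quasi:lemma length}.
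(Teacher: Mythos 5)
Your proof is correct and is exactly the paper's argument, just spelled out: apply Proposition \ref{ac:lemma Sobolev} to the identity map (which lies in $W^{1,2}(D)$ because $D$ is bounded by Lemma \ref{quasi:lemma length}), then integrate the resulting circle-by-circle conclusion in polar coordinates via Fubini. No gaps.
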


\begin{proof}
It suffices to apply Proposition \ref{ac:lemma Sobolev} to the identity function and integrate over all circles, using Fubini's theorem.
\end{proof}

\begin{lemma}\label{ac:lemma Real}
Let $Z\subset \R$ be a closed set and $f\colon Z \to \C$ be a continuous function. Consider the linear extension of $f$ in each bounded complementary open interval of $Z$ and extend $f$ by a constant in the unbounded complementary intervals of $Z$, if any. This yields a continuous extension $f\colon \R\to \C$. Suppose that $K\subset Z$ is a closed set containing $\partial Z$. If $f$ is locally absolutely continuous on each component of $Z\setminus K=\inter(Z)\setminus K$ and $\mathcal H^1(f(K))=0$, then $f'=0$ a.e.\ on $K$ and for all $x,y\in \R$ with $x\leq y$ we have
\begin{align*}
|f(x)-f(y)|\leq \int_{[x,y]}|f'|,
\end{align*}
where the latter {integral} might be infinite. In particular, if $(x_i,y_i)$, $i\in \N$, are the bounded components of $\R\setminus Z$, then
\begin{align*}
|f(x)-f(y)|\leq \int_{[x,y]\cap  (Z\setminus K)}|f'| + \sum_{[x,y]\cap(x_i,y_i)\neq \emptyset} |f(x_i)-f(y_i)|,
\end{align*}
for all $x,y\in \R$.
\end{lemma}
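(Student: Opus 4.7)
The plan is to first establish $f'=0$ almost everywhere on $K$ via a Hausdorff-measure covering argument, and then derive the main inequality by inserting a linear interpolation of $f|_K$ and using local absolute continuity on the complementary open intervals.

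For the derivative-zero statement, fix integers $N,M\geq 1$ and let
\[
E_{N,M}\coloneqq \{x\in K: |f(y)-f(x)|\geq (1/N)|y-x| \text{ for all } y\in K\cap (x-1/M,\,x+1/M)\}.
\]
On any subset of $E_{N,M}$ of diameter less than $1/M$, the restriction of $f$ is bi-Lipschitz from below with constant $1/N$, so any $\mathcal{H}^1$-cover of the image pulls back to a cover of the preimage with diameters multiplied by at most $N$; this yields $\mathcal{H}^1(E_{N,M})\leq N\cdot\mathcal{H}^1(f(K))=0$. Since the isolated points of $K$ form a countable set, at almost every $x\in K$ there exists a sequence $K\ni y_n\to x$ with $|f(y_n)-f(x)|/|y_n-x|\to 0$, forcing $f'(x)=0$ whenever $f$ is classically differentiable at such $x$. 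Defining $f'\coloneqq 0$ on the remaining null set makes $|f'|$ a measurable function on $\R$ that vanishes almost everywhere on $K$.

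For the main inequality, fix $x<y$ and assume $\int_{[x,y]}|f'|<\infty$ (otherwise the bound is trivial). Set $K^{*}\coloneqq ([x,y]\cap K)\cup\{x,y\}$, and let $(a_j,b_j)$, $j\in J$, be the components of $[x,y]\setminus K^{*}$. Define a continuous map $g\colon[x,y]\to\C$ by $g=f$ on $K^{*}$ and $g$ linearly interpolating between $f(a_j)$ and $f(b_j)$ on each $(a_j,b_j)$; continuity follows from uniform continuity of $f$. Its image $g([x,y])$ is a continuum joining $f(x)$ to $f(y)$, so $\mathcal{H}^1(g([x,y]))\geq |f(y)-f(x)|$; on the other hand $g([x,y])\subset f(K^{*})\cup\bigcup_j [f(a_j),f(b_j)]$, a set of $\mathcal{H}^1$-measure at most $\sum_j|f(b_j)-f(a_j)|$ since $\mathcal{H}^1(f(K^{*}))=0$. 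Hence $|f(y)-f(x)|\leq \sum_j |f(b_j)-f(a_j)|$. Because $\partial Z\subset K$, each $(a_j,b_j)$ lies either in $\inter(Z)\setminus K$, where local absolute continuity together with continuity at the endpoints yields $|f(b_j)-f(a_j)|\leq\int_{a_j}^{b_j}|f'|$, or inside a single bounded complementary interval of $Z$, where $f$ is affine and the same inequality holds with equality. Summing and using the first part to drop the integral over $K^{*}$ gives $|f(y)-f(x)|\leq \int_{[x,y]}|f'|$. The ``in particular'' statement follows by splitting this integral across $K$, $Z\setminus K$, and $\R\setminus Z$: the piece over $K$ vanishes, while on each $(x_i,y_i)$ the affine structure of $f$ ensures that $\int_{[x,y]\cap(x_i,y_i)}|f'|\leq |f(x_i)-f(y_i)|$ whenever $(x_i,y_i)$ meets $[x,y]$.

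The main technical obstacle I anticipate is making the covering argument for $f'=0$ a.e.\ on $K$ fully rigorous, in particular tracking how the bi-Lipschitz-from-below estimate interacts with the pre-measures $\mathcal{H}^1_\delta$ and handling the non-isolated-points reduction carefully. Once that is in place, the decisive idea is the linear-interpolation trick, which converts the qualitative hypothesis $\mathcal{H}^1(f(K))=0$ into the quantitative variation bound needed to complete the proof.
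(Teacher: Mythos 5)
The paper gives no argument of its own here: it cites the Banach--Zaretsky theorem and \cite[Lemma 6.4]{Nt}. Your proof is therefore a self-contained alternative rather than a reconstruction, and the key step --- replacing $f$ by the map $g$ that interpolates linearly across $[x,y]\cap K$ and observing that $g([x,y])$ is a continuum of $\mathcal{H}^1$-measure at most $\sum_j|f(b_j)-f(a_j)|$ --- is correct, efficient, and avoids any BV/absolute-continuity preliminaries. The verification that each $(a_j,b_j)$ lies entirely in $\inter(Z)\setminus K$ or in a single complementary interval of $Z$ (because $\partial Z\subset K$), and the passage from the local-AC estimate on compact subintervals to $|f(b_j)-f(a_j)|\le\int_{a_j}^{b_j}|f'|$, both go through. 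The ``in particular'' inequality also follows, since distinct $(a_j,b_j)$ inside the same $(x_i,y_i)$ contribute jointly at most $|f(x_i)-f(y_i)|$ by the affine structure. So the two displayed estimates are proved.

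There is, however, a genuine gap in the claim $f'=0$ a.e.\ on $K$. Your $E_{N,M}$-covering argument is correct and shows that, off a set of measure zero, every non-isolated $x\in K$ admits a sequence $K\ni y_n\to x$ with $|f(y_n)-f(x)|/|y_n-x|\to 0$; this pins $f'(x)$ to $0$ \emph{only if $f$ is classically differentiable at $x$}. You never establish that $f$ is differentiable at a.e.\ point of $K$, and the phrase ``defining $f'\coloneqq 0$ on the remaining null set'' silently assumes the non-differentiability set in $K$ is null, which is exactly what is missing (note also that the approach from $\R\setminus K$, where slopes of the linear interpolations can blow up, is not controlled by the sets $E_{N,M}$, which only see $y\in K$). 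The gap is fixable: in the nontrivial case $\int_{[x,y]}|f'|<\infty$, your own inequality applied to all partitions of $[x,y]$ shows $f$ has bounded variation there, hence is differentiable a.e.\ by Lebesgue's theorem, and then your $E_{N,M}$ argument finishes. If you do not want to invoke BV, this is precisely the point where the paper's cited Banach--Zaretsky route does the work for you.
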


Here, a $\C$-valued function is absolutely continuous if its real and imaginary part are. The proof of this lemma is elementary and can be derived from the Banach-Zaretsky theorem \cite[Theorem 4.6.2, p.\ 196]{BC}; see also \cite[Lemma 6.4]{Nt}.

\subsection{Distortion estimates}\label{distortion:section}

We end this section with distortion estimates on Whitney cubes that will be used in subsequent sections. In the following, $D,D^{*} \subsetneq \C$ are domains and $f\colon D \to D^{*}$ is a conformal map.

\begin{lemma}[Koebe's distortion theorem]\label{quasi:koebe distortion}
Let $z_0\in D$, {$0<r\leq \dist(z_0,\partial D)$}, and $0<c<1$. Then we have
\begin{equation}
\label{distortion1}
|f'(x)| \simeq \frac{|f(y)-f(z)|}{|y-z|}
\end{equation}
for all $x,y,z\in B(z_0,cr)$ with constants depending only on $c$. In particular, $f$ is bi-Lipschitz in $B(z_0,cr)$.
\end{lemma}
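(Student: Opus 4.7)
The plan is to reduce the statement to the classical Koebe distortion theorem via the standard normalization on the unit disk. Since $f$ is conformal, $f'(z_0)\neq 0$, and I would define $g\colon\D\to\C$ by
$$g(w)\coloneqq \frac{f(z_0+rw)-f(z_0)}{rf'(z_0)},$$
which is univalent on $\D$ with $g(0)=0$ and $g'(0)=1$. The classical distortion theorem then yields two-sided bounds on $|g'(w)|$ on $\overline{B(0,c)}$ depending only on $c$; translating back via $x=z_0+rw$ gives $|f'(x)|\simeq |f'(z_0)|$ for every $x\in B(z_0,cr)$, which handles the factor $|f'(x)|$ in \eqref{distortion1}.

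For the difference quotient $|f(y)-f(z)|/|y-z|$ with $y,z\in B(z_0,cr)$, the upper bound $|f(y)-f(z)|\lesssim |f'(z_0)|\,|y-z|$ follows by integrating $f'$ along the segment from $z$ to $y$, which lies in the convex disk $B(z_0,cr)$, and invoking the pointwise bound on $|f'|$ just obtained. For the lower bound, I would precompose $g$ with a M\"obius automorphism $\phi$ of $\D$ sending $0$ to $w_z\coloneqq (z-z_0)/r$; then
$$h(u)\coloneqq \frac{g(\phi(u))-g(w_z)}{(g\circ\phi)'(0)}$$
is again univalent on $\D$ with the standard $(0,1)$ normalization, and applying Koebe to $h$ at $\phi^{-1}(w_y)$, where $w_y\coloneqq (y-z_0)/r$, delivers $|g(w_y)-g(w_z)|\gtrsim |w_y-w_z|$. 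The estimate is uniform in $y,z$ because $|w_y|,|w_z|\leq c<1$, so their hyperbolic distance in $\D$ is bounded in terms of $c$; equivalently, $|\phi^{-1}(w_y)|$ stays in a compact subdisk of $\D$, on which both $\phi$ and the Koebe bounds behave with constants depending only on $c$. Translating back gives $|f(y)-f(z)|\gtrsim |f'(z_0)|\,|y-z|$.

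Combining these three estimates yields \eqref{distortion1} with constants depending only on $c$, and the bi-Lipschitz assertion on $B(z_0,cr)$ is an immediate consequence of the two-sided bound on the difference quotient. There is essentially no real obstacle here; this is a textbook corollary of the distortion theorem. The one small subtlety worth flagging is that the lower bound on $|f(y)-f(z)|$ cannot be extracted from pointwise control of $|f'|$ alone, and the M\"obius-precomposition trick above is the standard way to harvest it uniformly from the univalence of $g$.
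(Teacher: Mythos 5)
Your proof is correct; the paper does not prove this lemma but simply cites Pommerenke and McMullen, and your argument (normalize on the unit disk, apply the classical distortion theorem for the pointwise bound, integrate along a segment for the upper difference-quotient bound, and precompose with a M\"obius automorphism plus the growth theorem for the lower bound) is exactly the standard derivation found in those references. One tiny cosmetic point: the growth-theorem step implicitly uses that the pseudohyperbolic distance $|\phi^{-1}(w_y)|=|w_y-w_z|/|1-\overline{w_z}w_y|$ is at most $2c/(1+c^2)<1$ for $w_y,w_z\in \overline{B(0,c)}$, which is precisely the ``compact subdisk'' fact you flagged.
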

See for instance \cite[Chapter 1.3, pp.~8--9]{Po} or \cite[Theorem 2.9]{MCM}.

\begin{lemma}
\label{quasi:koebe}
Let $Q \in \mathcal{W}(D)$ be a Whitney cube and let $A \subset Q$ be a dyadic cube of deeper level. Then we have
$$\diam(f(Q)) \simeq \dist(f(Q),\partial D^*)$$
and
$$\mint{-}_{A} |f'| \simeq \mint{-}_{Q}|f'|,$$
with constants independent of $f, Q, A$.

\end{lemma}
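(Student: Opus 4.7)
The plan is to deduce both estimates from Koebe's distortion theorem (Lemma \ref{quasi:koebe distortion}) and Koebe's $1/4$-theorem, exploiting the fact that Whitney cubes lie at a distance from $\partial D$ comparable to their side length. Let $z_0$ be the center of $Q$ and set $r\coloneqq \dist(z_0,\partial D)$. The Whitney property (2) together with $\diam(Q)=\sqrt{2}\,\ell(Q)$ gives $r\simeq \ell(Q)$ and $A\subset Q\subset B(z_0,r/2)$. Since $f$ is univalent on the disk $B(z_0,r)\subset D$, Lemma \ref{quasi:koebe distortion} applies on $B(z_0,r/2)$ with $c=1/2$.

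For the derivative-average estimate, the distortion inequality \eqref{distortion1} immediately yields $|f'(x)|\simeq |f'(z_0)|$ for every $x\in B(z_0,r/2)$, and averaging over $Q$ and over $A$ separately gives $\mint{-}_Q |f'|\simeq |f'(z_0)|\simeq \mint{-}_A |f'|$.

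For the diameter/distance comparison, the bi-Lipschitz part of Lemma \ref{quasi:koebe distortion} yields $\diam(f(Q))\simeq |f'(z_0)|\ell(Q)$. Applying Koebe's $1/4$-theorem to the univalent map $f|_{B(z_0,r)}$ gives $B(f(z_0),|f'(z_0)|r/4)\subset f(B(z_0,r))\subset D^*$, hence $\dist(f(z_0),\partial D^*)\gtrsim |f'(z_0)|\ell(Q)$. The same reasoning applied at an arbitrary $x\in Q$, using $\dist(x,\partial D)\simeq \ell(Q)$ from the Whitney property and $|f'(x)|\simeq |f'(z_0)|$ from the previous paragraph, shows $\dist(f(x),\partial D^*)\gtrsim |f'(z_0)|\ell(Q)$ uniformly in $x\in Q$, so $\dist(f(Q),\partial D^*)\gtrsim \diam(f(Q))$. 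The reverse bound $\dist(f(Q),\partial D^*)\lesssim \diam(f(Q))$ follows by a symmetric application of Koebe's $1/4$-theorem to $f^{-1}$ on $B(f(z_0),\dist(f(z_0),\partial D^*))\subset D^*$: the resulting disk about $z_0$ must lie in $D$, which forces $\dist(f(z_0),\partial D^*)\lesssim |f'(z_0)|r\simeq |f'(z_0)|\ell(Q)\simeq \diam(f(Q))$, and since $\dist(f(Q),\partial D^*)\leq \dist(f(z_0),\partial D^*)$, the comparison closes.

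I do not expect any step to present a real obstacle; the lemma is essentially a packaging of the two classical Koebe theorems with the basic geometry of the Whitney decomposition. The only point requiring care is the uniformity of the implied constants, which is ensured because the Whitney property yields absolute numerical bounds that do not depend on the cube $Q$, the sub-cube $A$, or the particular conformal map $f$.
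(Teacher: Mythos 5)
Your proof is correct and takes essentially the same route as the paper: both arguments reduce the lemma to Koebe's distortion theorem on a ball $B(z_0,r)$ with $r\simeq\ell(Q)$ (the paper invokes Pommerenke's Corollary 1.4, which packages the Koebe $1/4$-theorem and its converse, while you unpack it into the two $1/4$-theorem applications to $f$ and to $f^{-1}$), and both derive the average estimate directly from $|f'(x)|\simeq|f'(y)|$ on $Q$. The choice of reference point ($z_0$ versus a point of $A$) and the precise choice of $r$ (center-to-boundary versus $\dist(Q,\partial D)$) are immaterial, so the two proofs are the same in substance.
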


\begin{proof}
The second part follows directly from \eqref{distortion1}, fixing $y \in A$ and noting that
\begin{align*}
|f'(x)| \simeq |f'(y)|
\end{align*}
for all $x \in Q$. For the first part, let $r\coloneqq\dist(Q,\partial D)$ and let $z_0$ be the center of $Q$. Note that by condition (2) of the Whitney decomposition, there is a uniform constant $0<c<1$ such that $Q \subset B(z_0,cr)\subset B(z_0,r)\subset D$ and we have $r\simeq \ell(Q)$. By the version of Koebe's distortion theorem in \cite[Corollary 1.4]{Po}, we have
\begin{align}
\label{distortion2}
r|f'(x)| \simeq \dist(f(x),\partial f(B(x,r))) \leq \dist(f(x),\partial D^*)
\end{align}
for $x \in Q$. In fact, the reverse inequality is also true, as one can see by applying \cite[Corollary 1.4]{Po} to $f^{-1}$.
Therefore, using \eqref{distortion1}, for $y,z\in Q$ we have
\begin{align*}
\dist(f(x),\partial D^*) \simeq r |f'(x)| \simeq  \ell(Q) \frac{|f(y)-f(z)|}{|y-z|}.
\end{align*}
Since $x,y,z\in Q$ are arbitrary, it follows that
\begin{align*}
\dist(f(Q),\partial D^*) \simeq \diam(f(Q)). & \qedhere
\end{align*}
\end{proof}

\section{Topological preliminaries}
\label{sec3}
In this section, we collect some general topological facts that will be needed later. Most of these facts might be considered quite standard, but we give the proofs for the sake of completeness. We first enumerate some well-known facts from planar topology that will be used repeatedly.

\begin{enumerate}[(PT1)]

\item (Zoretti's theorem, \cite[Corollary 3.11, p.~35]{WHY}) Let $K$ be a component of a compact set $M$ in the plane, and let $\varepsilon>0$. Then there exists a Jordan curve $\gamma$ that encloses $K$, does not intersect $M$, and is contained in the $\varepsilon$-neighborhood of $K$.

\item (\cite[Chapter V, Theorem 10.2, p.~106]{Newman}) If $\gamma$ is a Jordan curve contained in a domain $U \subset \RiemannSphere$, then $U \setminus \gamma$ has precisely two connected components, whose boundaries are $\gamma \cup F_1$ and $\gamma \cup F_2$. Here $F_1$ and $F_2$ are the unions of the components of $\partial U$ in each of the two complementary components of $\gamma$.

\item (Direct consequence of \cite[Chapter V, Theorem 8.2, p.~101]{Newman} and \cite[Chapter V, Theorem 14.2, p.~117]{Newman}) If $A$ is a totally disconnected closed subset of a domain $U \subset \RiemannSphere$, then $U\setminus A$ is connected. We note that this also holds if $A$ is only assumed to be \textit{relatively} closed in $U$.

\end{enumerate}

Now, for the remainder of this section, the letters $\Omega$ and $\Omega^{*}$ will denote domains in $\RiemannSphere$ each containing the point $\infty$, and $f\colon  \Omega \to \Omega^{*}$ a homeomorphism with $f(\infty)=\infty$.

\subsection{Boundary correspondence}

We first prove that $f$ induces a correspondence between the components of $\partial \Omega$ and the components of $\partial \Omega^*$, in the following sense.

\begin{proposition}
\label{proposition:topological}
For each component $b$ of $\partial \Omega$, define $f^{*}(b)$ as the component $b^*$ of $\partial \Omega^{*}$ such that $\{f(z_n)\}_{n\in \N}$ accumulates at $b^{*}$ whenever $\{z_n\}_{n\in \N}$ is a sequence in $\Omega$ accumulating at $b$. Then $f^*$ is well-defined and maps the set of boundary components of $\Omega$ bijectively onto the set of boundary components of $\Omega^{*}$.

\end{proposition}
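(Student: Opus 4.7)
The plan is to shrink Jordan curves around $b$ via Zoretti's theorem, transport them under $f$, and isolate a limiting continuum that pinpoints the target boundary component.

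First, given a component $b$ of $\partial\Omega$, I would use (PT1) to produce a nested sequence of pairwise disjoint Jordan curves $\gamma_n\subset\Omega$ enclosing $b$ and lying in shrinking neighborhoods of $b$, so that the component $U_n$ of $\RiemannSphere\setminus\gamma_n$ not containing $\infty$ satisfies $\overline{U_n}\subset N_{1/n}(b)$, and hence $\bigcap_n\overline{U_n}=b$. Each $f(\gamma_n)$ is then a Jordan curve in $\Omega^*$; let $V_n$ denote the component of $\RiemannSphere\setminus f(\gamma_n)$ not containing $\infty$. Combining (PT2) applied to $\Omega$ with $\gamma_n$ and to $\Omega^*$ with $f(\gamma_n)$, together with the fact that $f$ is a homeomorphism fixing $\infty$, we obtain $f(U_n\cap\Omega)=V_n\cap\Omega^*$.

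The key object is the nested intersection $K^*:=\bigcap_n\overline{V_n}$, which is a nonempty continuum. I would show $K^*\cap\Omega^*=\emptyset$: a point $w\in\Omega^*\cap K^*$ would lie in $\overline{V_n}=V_n\cup f(\gamma_n)$ for all $n$, and since the $\gamma_n$ are pairwise disjoint the single point $f^{-1}(w)\in\Omega$ lies on at most one $\gamma_n$, so $w\in V_n$ for all large $n$, yielding $f^{-1}(w)\in U_n$ eventually and thus $f^{-1}(w)\in\bigcap_n\overline{U_n}=b\subset\partial\Omega$, a contradiction. Hence $K^*$ is a continuum in $\RiemannSphere\setminus\Omega^*$ and lies in a unique complementary component $B^*$ of $\Omega^*$; a symmetric argument with $f^{-1}$ shows $B^*\subset K^*$, so $K^*=B^*$. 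A straightforward diagonalization then identifies the cluster set of $\{f(z_n)\}_{n\in\N}$ for sequences $z_n\to b$ with $K^*\cap\partial\Omega^*=\partial B^*$. I would then set $b^*:=\partial B^*$, invoking the topological fact that $\partial B^*$ is a single component of $\partial\Omega^*$. Uniqueness of $b^*$ is immediate since distinct components of $\partial\Omega^*$ are disjoint while the cluster set is nonempty by compactness.

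For bijectivity, applying the same construction to $f^{-1}\colon\Omega^*\to\Omega$ yields a candidate inverse $(f^{-1})^*$ on components of $\partial\Omega^*$; a direct sequence-chase, tracking $z_n\to b$ alongside $f(z_n)\to w\in b^*$, shows $(f^{-1})^*\circ f^*=\mathrm{id}$ and, symmetrically, $f^*\circ(f^{-1})^*=\mathrm{id}$. The main obstacle I anticipate is the justification that $\partial B^*$ is a single connected component of $\partial\Omega^*$: this is trivial for circle domains (the ultimate setting), but the proposition is stated in greater generality, and the claim requires a topological argument using (PT2) together with the observation that $\RiemannSphere\setminus B^*$ is connected, being the union of the connected domain $\Omega^*$ with the other complementary components, each of which meets $\overline{\Omega^*}$.
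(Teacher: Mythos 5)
Your argument is correct in outline but takes a genuinely different route from the paper. The paper's proof is shorter and purely by contradiction: assuming two sequences $z_n,z_n'\to b$ had images accumulating at distinct components $b^*,b'^*$, it applies (PT1) \emph{once} to get a Jordan curve $\gamma^*\subset\Omega^*$ separating $b^*$ from $b'^*$, pulls it back to $\gamma\subset\Omega$, and notes via (PT2) that $z_n$ and $z_n'$ must eventually lie in different components of $\Omega\setminus\gamma$, forcing the connected set $b$ to meet both sides of $\RiemannSphere\setminus\gamma$, a contradiction. Injectivity follows by applying the same argument to $f^{-1}$, and surjectivity is a one-line observation. Your approach instead \emph{constructs} the target component explicitly, by shrinking a nested family of Jordan curves $\gamma_n$ around $b$ and tracking the nested regions $V_n$ bounded by $f(\gamma_n)$; the limiting continuum $K^*=\bigcap_n\overline{V_n}$ is then identified with a complementary component $B^*$, and $b^*=\partial B^*$. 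This buys you an explicit description of $f^*(b)$ (and of the complementary-component correspondence the paper sets up afterwards in passing), at the price of having to verify more topology.

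A few places where you would need to tighten things. (1) You should check that the regions nest, i.e.\ $\overline{V_{n+1}}\subset V_n$, so that $K^*$ really is a nested intersection of continua; this does follow from $\gamma_{n+1}\subset U_n\cap\Omega$ and $f(U_n\cap\Omega)=V_n\cap\Omega^*$, but it is worth stating. (2) The ``symmetric argument with $f^{-1}$'' for $B^*\subset K^*$ is better phrased as a direct connectedness argument: $B^*$ is connected, misses every $f(\gamma_n)=\partial V_n$, and meets $\overline{V_n}$ (since $K^*\subset B^*$), hence $B^*\subset V_n$ for all $n$. (3) The ``straightforward diagonalization'' step should be replaced by the observation you actually need: the cluster set of $f$ at $b$ lies in $\overline{\Omega^*}\cap B^*$, and since points of $\overline{\Omega^*}\cap B^*$ are limits of points of $\Omega^*\subset\RiemannSphere\setminus B^*$, that set equals $\partial B^*$; you do not need (nor is it immediate without a circularity) that the cluster set is \emph{all} of $\partial B^*$. (4) For the obstacle you flag, once you know $\RiemannSphere\setminus B^*$ is connected you get that $\partial B^*$ is connected, and maximality follows because $\partial\Omega^*\cap\operatorname{int}(B^*)=\emptyset$; so the component of $\partial\Omega^*$ containing $\partial B^*$, being contained in $B^*$ and disjoint from its interior, equals $\partial B^*$. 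With these points filled in, the proof goes through; it is simply heavier machinery than the paper's separation argument, which avoids the complementary-component bookkeeping entirely.
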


See also \cite[Section 1]{SCH1}.

\begin{proof}
First, note that if $z_n$ is a sequence in $\Omega$ accumulating at a boundary component $b$, then $f(z_n)$ must accumulate on $\partial \Omega^*$, since $f\colon \Omega \to \Omega^*$ is a homeomorphism. To prove that $f^*$ is well-defined, suppose for a contradiction that there are two sequences $z_n$ and $z_n'$ in $\Omega$ accumulating at $b$, but $f(z_n)$ and $f(z_n')$ accumulate at two distinct boundary components of $\Omega^*$, say $b^*$ and $b'^{*}$. By (PT1), we can find a Jordan curve $\gamma^*$ in $\Omega^*$ that separates $b^*$ from $b'^{*}$. Let $\gamma=f^{-1}(\gamma^*) \subset \Omega$. By (PT2), each of $\Omega^*\setminus \gamma^*$ and  $\Omega\setminus \gamma$ has precisely two components and since $f$ is a homeomorphism, it follows that each of the components of $\Omega\setminus \gamma$ is mapped bijectively to a component of $\Omega^*\setminus \gamma^*$. Hence, $z_n$ and $z_n'$  eventually lie in different components of $\Omega\setminus \gamma$, which implies that $b$ intersects both components of $\RiemannSphere \setminus\gamma$, contradicting the fact that $b$ is a connected subset of $\partial \Omega \subset \RiemannSphere \setminus \gamma$. This proves that $f^*$ is well-defined, and the same argument with $f$ replaced with $f^{-1}$ shows that if $b$ and $b'$ are two distinct boundary components of $\Omega$, then $f^*(b) \neq f^*(b')$, i.e., $f^{*}$ is injective. Finally, if $b^*$ is a boundary component of $\Omega^*$ and $w$ is any point of $b^*$, then there is a sequence $z_n$ in $\Omega$ with $f(z_n) \to w$. This sequence $z_n$ necessarily accumulates at a boundary component $b$ of $\Omega$, and we get $f^*(b)=b^*$. This shows that $f^*$ is surjective, completing the proof of the proposition.
\end{proof}

We will denote by $B$ the closed component of $\widehat{\C}\setminus \Omega$ that is bounded by $b$, and similarly $B^*$ is bounded by $b^*$. The map $f^*$ extends to the set of complementary components of $\Omega$, and if $b$ is mapped to $b^*$, then $B$ is mapped to $B^*$. With this notation, the following lemma is a direct consequence of (PT2):

\begin{lemma}\label{lemma:surround}
Let $\gamma$ be a Jordan curve in $\mathbb{C}$, and suppose that $\gamma \subset \Omega$. If $U$ denotes the bounded component of $\C\setminus \gamma$ and $U^*$ denotes the bounded component of $\C\setminus f(\gamma)$, then $f(U\cap \Omega)=U^*\cap \Omega^*$ and a complementary component $B$ of $\Omega$ is contained in $U$ if and only if $B^*$ is contained in $U^*$.
\end{lemma}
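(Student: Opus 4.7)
The plan is to derive both claims from (PT2), applied first to $\gamma\subset \Omega$ and then to $f(\gamma)\subset \Omega^{*}$, combined with Proposition \ref{proposition:topological} and the normalization $f(\infty)=\infty$.

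First I would observe that (PT2) applied to the Jordan curve $\gamma\subset \Omega$ gives exactly two connected components of $\Omega\setminus\gamma$, one on each side of $\gamma$. Since $U$ is the bounded component of $\C\setminus\gamma$ and $\infty\in \Omega\setminus \br{U}$, these two components are necessarily $V_{1}\coloneqq U\cap\Omega$ and $V_{2}\coloneqq \Omega\setminus \br{U}$. Applying (PT2) analogously to $f(\gamma)\subset \Omega^{*}$ produces the two components $V_{1}^{*}\coloneqq U^{*}\cap\Omega^{*}$ and $V_{2}^{*}\coloneqq \Omega^{*}\setminus \br{U^{*}}$ of $\Omega^{*}\setminus f(\gamma)$.

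Next I would use that $f\colon \Omega\to\Omega^{*}$ is a homeomorphism, hence sends components of $\Omega\setminus \gamma$ bijectively onto components of $\Omega^{*}\setminus f(\gamma)$. Because $f(\infty)=\infty$ and $\infty\in V_{2}\cap V_{2}^{*}$, we must have $f(V_{2})=V_{2}^{*}$, and therefore $f(V_{1})=V_{1}^{*}$, which is the first assertion $f(U\cap\Omega)=U^{*}\cap\Omega^{*}$.

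For the second claim I would invoke the boundary description in (PT2): the boundary of $V_{1}$ is $\gamma\cup F_{1}$, where $F_{1}$ is the union of those components $b$ of $\partial\Omega$ with $b\subset U$; since each complementary component $B$ is connected and disjoint from $\gamma\subset\Omega$, the conditions $b\subset U$ and $B\subset U$ are equivalent. Thus $B\subset U$ iff $b\subset \partial V_{1}$. By Proposition \ref{proposition:topological}, any sequence in $V_{1}$ accumulating at $b$ is mapped by $f$ to a sequence in $V_{1}^{*}$ accumulating at $b^{*}$ (and conversely via $f^{-1}$), so $b\subset \partial V_{1}$ iff $b^{*}\subset \partial V_{1}^{*}$, which by the analogous boundary description for $V_{1}^{*}$ is equivalent to $B^{*}\subset U^{*}$. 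The argument is essentially bookkeeping; the only subtle point is correctly pairing the bounded sides by using $f(\infty)=\infty$, so I do not expect a genuine obstacle.
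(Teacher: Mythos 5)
Your proof is correct and is essentially the elaboration the paper has in mind; the paper merely asserts the lemma is ``a direct consequence of (PT2)'' and gives no argument, and you supply exactly that argument. The one small addition is that you also invoke Proposition \ref{proposition:topological} (the boundary correspondence $f^*$) to pass from ``$b\subset\partial V_1$'' to ``$b^*\subset\partial V_1^*$,'' but that is unavoidable and is part of the surrounding setup the paper refers to. All the steps check out: the identification of the two components of $\Omega\setminus\gamma$ as $U\cap\Omega$ and $\Omega\setminus\overline U$ follows from (PT2) together with the fact that $U\cap\Omega$ and $(\RiemannSphere\setminus\overline U)\cap\Omega$ are disjoint, open, nonempty (both meet $\gamma$'s neighborhoods, and the latter contains $\infty$) and cover $\Omega\setminus\gamma$; matching the unbounded sides via $f(\infty)=\infty$ gives $f(U\cap\Omega)=U^*\cap\Omega^*$; the equivalence $b\subset U\Leftrightarrow B\subset U$ holds because $B$ is connected, $b\subset B$, and $B\cap\gamma=\emptyset$; and the final step correctly uses that each $b^*$ is connected and disjoint from $f(\gamma)$, so $b^*\cap\partial V_1^*\neq\emptyset$ already forces $b^*\subset\partial V_1^*$.
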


We note that one can also define the boundary correspondence map $h^*$ for any domains $D,D' \subset \RiemannSphere$ and any homeomorphism $h\colon D \to D'$. With this remark, we have the following consequence of Proposition \ref{proposition:topological} and (PT3).

\begin{lemma}\label{lemma:extension}
Let $U,U'$ be domains in $\widehat{\C}$ and let $A,A'$ be totally disconnected relatively closed subsets of $U,U'$, respectively. Let $h$ be a homeomorphism of $U\setminus A$ onto $U'\setminus A'$ such that $h^*(A)=A'$. Then $h$ has a unique extension to a homeomorphism of $U$ onto $U'$.
\end{lemma}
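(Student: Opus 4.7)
The plan is to build the extension $\tilde h \colon U \to U'$ pointwise, setting $\tilde h = h$ on $U \setminus A$ and $\tilde h(a)$ equal to the unique point of the component $h^{*}(\{a\})$ for $a \in A$. Since $A$ is totally disconnected, every component of $A$ is a singleton (and similarly for $A'$), so Proposition \ref{proposition:topological} applied to $h$ as a homeomorphism between the domains $U\setminus A$ and $U'\setminus A'$ (which are connected open subsets of $\widehat{\C}$ by (PT3)) yields that $h^{*}|_A \colon A \to A'$ is a well-defined bijection. Combined with $h$ itself, this gives a bijection $\tilde h \colon U \to U'$.

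The heart of the argument is continuity at a point $a \in A$. Fix $\varepsilon>0$. Since $A'$ is relatively closed in $U'$ and totally disconnected, I would choose $r'<\varepsilon$ small enough that $\br B(h^{*}(a),r') \subset U'$, so that $M' \coloneqq \br B(h^{*}(a),r')\cap A'$ is compact with $\{h^{*}(a)\}$ one of its components. Zoretti's theorem (PT1) then produces a Jordan curve $\gamma' \subset B(h^{*}(a),r')\setminus A'$ enclosing $h^{*}(a)$; let $V^{*}$ be the Jordan domain bounded by $\gamma'$ that contains $h^{*}(a)$, so $V^{*}\subset B(h^{*}(a),\varepsilon)$. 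Pull back to obtain the Jordan curve $\gamma \coloneqq h^{-1}(\gamma') \subset U\setminus A$. By (PT2), $U\setminus \gamma$ splits into two open components $V_1, V_2$, and since $A\cap V_i$ is totally disconnected and relatively closed in $V_i$, (PT3) yields that each $V_i\setminus A$ is connected. Hence $h$ restricts to a homeomorphism of $V_i\setminus A$ onto a component of $(U'\setminus A')\setminus \gamma'$; say $h(V_1\setminus A) = V^{*}\setminus A'$.

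Next I would verify that $a\in V_1$, so that $V_1$ is a genuine neighborhood of $a$ in $U$. Because $A$ has empty interior, pick any sequence $z_n \in U\setminus A$ with $z_n \to a$; by the very definition of the boundary correspondence, $h(z_n) \to h^{*}(a) \in V^{*}$, so $h(z_n)\in V^{*}$ eventually, hence $z_n\in V_1\setminus A$ eventually, forcing $a\in \br{V_1}^U \subset V_1\cup \gamma$; since $\gamma\cap A=\emptyset$, we conclude $a\in V_1$. Finally I would check $\tilde h(V_1)\subset V^{*}$: on $V_1\setminus A$ this is immediate, and for $z\in V_1\cap A$ a limiting argument $z_n\in V_1\setminus A$, $z_n\to z$ gives $\tilde h(z)=h^{*}(z) \in \br{V^{*}} = V^{*}\cup \gamma'$, which lies in $V^{*}$ because $\tilde h(z)\in A'$ while $\gamma'\cap A'=\emptyset$.

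Applying the same argument to $h^{-1}$ (with $A$ and $A'$ swapped) gives continuity of $\tilde h^{-1}$, so $\tilde h$ is a homeomorphism. Uniqueness is immediate: any two continuous extensions agree on the dense set $U\setminus A$, hence on all of $U$. The main technical obstacle is precisely the verification that the preimage of the small Jordan domain $V^{*}$ is an \emph{open} neighborhood of $a$ in $U$ rather than merely an open set meeting every deleted neighborhood of $a$; the Jordan curve pullback combined with Proposition \ref{proposition:topological} is what makes this work.
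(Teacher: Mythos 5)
Your proof is correct. The paper itself gives no written argument for Lemma~\ref{lemma:extension} (it is simply announced as ``a consequence of Proposition~\ref{proposition:topological} and (PT3)''), and your continuity argument — defining $\tilde h$ on $A$ via the cluster map, pulling back a Zoretti curve $\gamma'$ around $h^*(a)$, and using (PT2)/(PT3) to identify the two components of $(U\setminus A)\setminus\gamma$ — is precisely the natural way to fill in those omitted details. The one point you could make a bit more explicit is that $\{a\}$ is actually a connected component of $\partial(U\setminus A)=\partial U\cup A$ (so that $h^*(\{a\})$ is a well-defined singleton): this needs a short boundary-bumping observation, since a priori a nondegenerate continuum in $\partial U\cup A$ might pass through $a$, but total disconnectedness of $A$ together with $\overline{B(a,r)}\subset U$ for small $r$ rules that out.
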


We conclude this subsection with the following connectedness lemma.

\begin{lemma}
\label{lemma:connected}
Let $\mathcal{C}$ be a transboundary chain of $\Omega$, as in Definition \ref{chain}. Then the set
$$\mathcal{K} \coloneqq \bigcup_{\gamma_i\in \mathcal C} \br{f(\gamma_i)} \cup \bigcup_{B_i\in \mathcal C} B_i^*$$
is a continuum and therefore
$$\diam(\mathcal K) \leq \sum_{\gamma_i\in \mathcal C} \diam(f(\gamma_i)) +\sum_{B_i\in \mathcal C} \diam(B_i^*).$$
\end{lemma}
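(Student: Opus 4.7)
The plan is to show that $\mathcal{K}$ is a continuum by exhibiting it as a chain of overlapping compact connected sets, and then deduce the diameter bound from the triangle inequality applied along this chain.

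First I would verify compactness and the connectedness of each individual piece. Each $\overline{f(\gamma_i)}$ is compact and connected, being the closure of the continuous image of the interval; each complementary component $B_i^*$ is compact (since $\infty \in \Omega^*$) and connected by definition. So $\mathcal{K}$ is a finite union of compact connected sets, hence compact, and it suffices to check that consecutive pieces in the listing $\overline{f(\gamma_1)}, B_1^*, \overline{f(\gamma_2)}, B_2^*, \dots, \overline{f(\gamma_m)}$ have nonempty pairwise intersection.

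The key step is to establish this overlap using the boundary correspondence from Proposition \ref{proposition:topological}. Fix $i \in \{1,\dots,m-1\}$. By the definition of a transboundary chain, the terminal point $b_i$ of $\gamma_i$ and the initial point $a_{i+1}$ of $\gamma_{i+1}$ both lie on $\partial B_i$, while $\gamma_i|_{(0,1)}$ and $\gamma_{i+1}|_{(0,1)}$ are contained in $\Omega$. Choose a sequence $z_n \in \gamma_i|_{(0,1)}$ with $z_n \to b_i$; then $\{z_n\}$ accumulates on the boundary component of $\Omega$ supporting $b_i$, which is exactly $\partial B_i$. By Proposition \ref{proposition:topological}, the image sequence $\{f(z_n)\}$ accumulates on $f^*(\partial B_i) \subset B_i^*$. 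Thus $\overline{f(\gamma_i)} \cap B_i^* \neq \emptyset$, and the same argument applied at $a_{i+1}$ gives $\overline{f(\gamma_{i+1})} \cap B_i^* \neq \emptyset$.

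Once the pairwise intersections are secured, connectedness of $\mathcal{K}$ follows from the standard fact that a finite chain of connected sets with consecutive overlaps has connected union. For the diameter estimate, given any two points $x, y \in \mathcal{K}$, locate the pieces containing them and pick an interior point in each overlap; applying the triangle inequality along the resulting finite sequence of points and bounding each hop by the diameter of the piece in which it lies yields
\[
|x-y| \leq \sum_{\gamma_i \in \mathcal{C}} \diam\bigl(\overline{f(\gamma_i)}\bigr) + \sum_{B_i \in \mathcal{C}} \diam(B_i^*) = \sum_{\gamma_i \in \mathcal{C}} \diam(f(\gamma_i)) + \sum_{B_i \in \mathcal{C}} \diam(B_i^*).
\]
The main conceptual point is the invocation of the boundary correspondence at the endpoints lying on $\partial B_i$; everything else is routine point-set topology. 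I do not anticipate a genuine obstacle here, only the need to handle the possibility that the chain degenerates (e.g.\ $m=1$, where $\mathcal{K} = \overline{f(\gamma_1)}$ is trivially a continuum).
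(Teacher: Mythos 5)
Your proof is correct and follows essentially the same route as the paper: both use the boundary correspondence (the definition of $f^*$ from Proposition~\ref{proposition:topological}) to show that $\overline{f(\gamma_i)}$ meets $B_i^*$ at each end, then conclude connectedness by the finite-chain-of-overlapping-continua argument and obtain the diameter bound by summing along the chain. The only cosmetic difference is that the paper cites the general fact $\diam(A \cup B) \le \diam(A) + \diam(B)$ for a connected union, whereas you spell out the triangle inequality along intermediate points in the overlaps; these are the same argument.
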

The paths $\gamma_i\colon [0,1]\to \C$ do not have endpoints necessarily in $\Omega$, but $\gamma_i|_{(0,1)}\subset \Omega$. In view of this, $f(\gamma_i)$ is understood as $f(\gamma_i |_{(0,1)})$.

\begin{proof}
We suppose that $\mathcal C = (\gamma_1,B_1,\dots,\gamma_{m-1},B_{m-1},\gamma_m)$. The argument is the same if $\mathcal C$ is of the form $(B_0,\gamma_1,\dots,\gamma_m)$ or $(\gamma_1,\dots,\gamma_m,B_m)$.

First, note that if $z_n$ is a sequence in $\gamma_1 |_{(0,1)}$ that converges to the endpoint $b_1$ of $\gamma_1$ that lies in $\partial B_1$, then $f(z_n)$ must accumulate at $B_1^*$, by the definition of $f^*$. Hence $\overline{f(\gamma_1)} \cap B_1^* \neq \emptyset$. Now, note that $\overline{f(\gamma_1)}$ is a continuum, since $f\colon \Omega \to \Omega^*$ is continuous. It follows that $\overline{f(\gamma_1)} \cup B_1^*$ is the union of two continua with non-empty intersection, and thus is a continuum as well. The same argument with $\gamma_1$ replaced with $\gamma_2$ shows that $B_1^* \cup \overline{f(\gamma_2)}$ is a continuum, and so is $\overline{f(\gamma_1)} \cup B_1^* \cup \overline{f(\gamma_2)}$. Repeating this argument, we get that $\mathcal K$ is a continuum, as required.

The last inequality in the statement of the lemma follows from the general fact that if $C$ is a connected metric space and $C=A\cup B$, then $\diam(C)\leq \diam(A)+\diam(B)$.
\end{proof}

\subsection{Cluster sets}

Equivalently, the map $f^*$ of Proposition \ref{proposition:topological} can be defined using the notion of a cluster set. If $E \subset \partial \Omega$ is closed, then the \textit{cluster set of $f$ at $E$ (with respect to $\Omega$)} is defined by
$$
\operatorname{Clu}(f;E) \coloneqq \bigcap_{\varepsilon>0} \overline{f(N_{\varepsilon}(E)\cap \Omega)},
$$
where $N_\varepsilon(E)$ denotes the open $\varepsilon$-neighborhood of the set $E$. In particular, the cluster set of $f$ at a boundary point $z_0 \in \partial \Omega$ is
$$
\operatorname{Clu}(f;z_0)= \bigcap_{\varepsilon>0} \overline{f(B(z_0,\varepsilon)\cap \Omega)}.
$$
Note that $\operatorname{Clu}(f;E) \subset \partial \Omega^*$, since $f\colon \Omega \to \Omega^*$ is a homeomorphism. Moreover, the cluster set $\operatorname{Clu}(f;E)$ is the intersection of a decreasing sequence of compact sets, and hence is compact as well. It is immediate to see that $\operatorname{Clu}(f;E)$ is precisely the set of accumulation points of $\{f(z_n)\}_{n\in \N}$ whenever $\{z_n\}_{n\in \N}$ is a sequence in $\Omega$ converging to a point of $E$. In particular, if $b$ is a boundary component of $\Omega$, then $f^*(b)=\operatorname{Clu}(f;b)$.

We will also need the following proposition, which essentially asserts that $f$ ``maps" boundary points to boundary continua.

\begin{proposition}
\label{proposition:topological2}
Suppose that each boundary component of $\Omega$ is a Jordan curve or a single point and that for each $\varepsilon>0$ there are at most finitely many Jordan curves in $\partial \Omega$ with diameter greater than $\varepsilon$. Then for every $z_0 \in \partial \Omega$, the cluster set $\operatorname{Clu}(f;z_0)$ is a continuum (possibly degenerate).
\end{proposition}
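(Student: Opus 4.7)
The plan is to express $\operatorname{Clu}(f;z_0)$ as a decreasing intersection of continua. Concretely, I aim to construct a nested decreasing sequence of open sets $V_n\subset\widehat{\C}$ with (i) every sequence in $\Omega$ converging to $z_0$ is eventually contained in $V_n$, and (ii) $V_n\cap\Omega$ is connected, together with $V_n\subset B(z_0,1/n)$. Property (i) (together with the diameter bound) forces $\operatorname{Clu}(f;z_0)=\bigcap_n \overline{f(V_n\cap\Omega)}$, and property (ii) makes each $\overline{f(V_n\cap\Omega)}$ a continuum; a decreasing intersection of continua in $\widehat{\C}$ is itself a continuum. Nonemptiness of $\operatorname{Clu}(f;z_0)$ follows because any sequence in $\Omega$ converging to $z_0$ has a subsequence whose $f$-images converge in $\widehat{\C}$, and compactness is immediate.

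The construction of $V_n$ splits by the boundary component $b$ of $\partial\Omega$ containing $z_0$. If $b=\{z_0\}$ is a point component, I would apply Zoretti's theorem (PT1) with $M=\partial\Omega$ and $K=\{z_0\}$: for each $n$ this gives a Jordan curve $\gamma_n\subset\Omega$ enclosing $z_0$ and contained in $B(z_0,1/n)$. Take $V_n$ to be the bounded components of $\widehat{\C}\setminus\gamma_n$, chosen nested. Property (i) is immediate, and (ii) follows from (PT2) applied to the Jordan curve $\gamma_n\subset\Omega$.

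When $b$ is a Jordan curve, first observe that if $b^{*}:=f^{*}(b)$ is a point then $\operatorname{Clu}(f;z_0)\subset\{b^{*}\}$ is automatically a continuum. Otherwise $b^{*}$ is a Jordan curve, and I would use a ``lens'' construction: by the Jordan--Schoenflies theorem, fix a homeomorphism $\psi\colon\widehat{\C}\to\widehat{\C}$ sending $b$ to $S^1$, $z_0$ to $1$, and $B$ to $\overline{\D}$. For small $\delta_n>0$, let $\beta_n^{Sch}$ be the arc of $S^1$ through $1$ from $e^{-i\delta_n}$ to $e^{i\delta_n}$, and build an arc $\alpha_n^{Sch}$ from $e^{i\delta_n}$ to $e^{-i\delta_n}$ whose interior lies in $\psi(\Omega)\cap\{|z|>1\}$ and hugs $\beta_n^{Sch}$ inside a small Schoenflies ball around $1$. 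Set $V_n^{Sch}$ to be the bounded component of $\widehat{\C}\setminus(\alpha_n^{Sch}\cup\beta_n^{Sch})$ and $V_n:=\psi^{-1}(V_n^{Sch})$. Property (i) holds because $V_n^{Sch}$ contains a one-sided neighborhood of $1$ in $\{|z|\geq 1\}$. Property (ii) rests on the observation that the Jordan curve $\alpha_n^{Sch}\cup\beta_n^{Sch}$ meets no $\psi(B_i)$ with $i\neq 0$: indeed $\beta_n^{Sch}\subset S^1=\psi(b)$ and $\alpha_n^{Sch}\subset\psi(\Omega)$ except at its endpoints. By connectedness, each $\psi(B_i)$ meeting $V_n^{Sch}$ is entirely contained in it, so $V_n^{Sch}\cap\psi(\Omega)$ is a topological disk with a disjoint, entirely contained family of closed topological sub-disks and points removed, which is connected by a Sierpinski-carpet-type argument.

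The hardest step is constructing $\alpha_n^{Sch}\subset\psi(\Omega)$ hugging $\beta_n^{Sch}$. This is where the hypothesis of finitely many large boundary Jordan curves is essential: for any threshold $\eta$, only finitely many components of $\partial\Omega$ have original diameter exceeding $\eta$, and by uniform continuity of $\psi^{-1}$ the corresponding Schoenflies components are finitely many and sit at positive Schoenflies distance from $1$, hence they do not obstruct short arcs once $\delta_n$ is small enough. Near $z_0=1$ in Schoenflies only arbitrarily small components appear; being disjoint closed topological sub-disks and points, they leave $\psi(\Omega)$ locally path-connected there in a Sierpinski-carpet sense, which suffices to produce the required arc. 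Justifying this local connectedness carefully, together with the topological fact that removing a disjoint, entirely contained family of closed topological sub-disks and points from a topological disk yields a connected set, is the main technical burden of the argument.
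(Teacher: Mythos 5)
Your high-level strategy—writing $\operatorname{Clu}(f;z_0)$ as a nested intersection $\bigcap_n\overline{f(V_n\cap\Omega)}$ of continua—is the same as the paper's, and your handling of the point-component case is correct (though the paper disposes of it more simply by observing $\operatorname{Clu}(f;z_0)=f^*(\{z_0\})$ is a boundary component of $\Omega^*$). The divergence, and the genuine gap, is in the Jordan-curve case.

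The paper deals with the Jordan curve $b$ by first noting the special case where every other boundary component is a single point: then $B(z_0,\varepsilon)\cap\Omega=(B(z_0,\varepsilon)\setminus\overline{\D})\setminus A$ with $A$ totally disconnected and relatively closed, which is connected by \textup{(PT3)} at once. The general case is then reduced to this one via Moore's decomposition theorem (Theorem 3.7), which collapses all the other Jordan regions $B_i$ to points, combined with Lemma 3.5 showing that $\operatorname{Clu}(f;z_0)=\operatorname{Clu}(f\circ h^{-1};z_0)$. Your lens construction tries to bypass Moore entirely, but it runs into two substantial unproved claims, both of which you flag as ``the main technical burden'' without closing them. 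First, the existence of the arc $\alpha_n^{Sch}$ with interior in $\psi(\Omega)$, endpoints $e^{\pm i\delta_n}\in S^1$, and contained in a small ball around $1$ requires that $\overline{\Omega}$ be locally connected near $z_0$ and that the chosen boundary points be accessible from $\Omega$. This is not a consequence of \textup{(PT1)}--\textup{(PT3)}; it is exactly the kind of statement that Moore's theorem (and the associated theory of null-sequences of cellular sets) is designed to supply, so you have not actually removed the need for that machinery—you have hidden it. Second, the assertion that an open topological disk minus a disjoint family of compactly contained closed Jordan regions and points remains connected is true, but it rests on unicoherence of the sphere (a Janiszewski-type separation theorem: a closed set separates only if some component of it does), which again is not among the cited facts and is not an elementary ``Sierpi\'nski-carpet-type argument.'' Since both ingredients are at least as deep as the tool they are meant to replace, the proposal as written has a real gap in the Jordan-curve case, even though the overall scheme could presumably be made to work with the missing topology supplied.
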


\begin{proof}
First note that if $\{z_0\}$ is a boundary component of $\Omega$, then $\operatorname{Clu}(f;z_0)=f^*(\{z_0\})$, which is a component of $\partial \Omega^*$ by the definition of $f^*$, so it is a continuum. We can therefore assume that the boundary component of $\Omega$ containing $z_0$ is a Jordan curve. Now, suppose that this curve is the unit circle, and that all the other boundary components of $\Omega$ are single points. Then, in this case, for each $\varepsilon>0$, the set $B(z_0,\varepsilon)\cap \Omega$ is connected by (PT3). It follows that the cluster set
$$
\operatorname{Clu}(f;z_0)= \bigcap_{\varepsilon>0} \overline{f(B(z_0,\varepsilon)\cap \Omega)}
$$
is a decreasing intersection of continua, and thus has to be a continuum as well. This proves the result in this special case.

For the general case, let $b$ be the Jordan curve in $\partial \Omega$ containing $z_0$. Using the Schoenflies theorem  \cite[Corollary 2.9, p.~25]{Po}, we may map $b$ to the unit circle with a global homeomorphism of $\widehat{\C}$ that fixes $\infty$. We can therefore assume without loss of generality that $z_0$ lies in the unit circle $b\subset \partial \Omega$. Now, if $\partial \Omega$ contains only finitely many Jordan curves, then they have positive distance from $b$, so the argument in the previous paragraph can be used to obtain the conclusion that $\operatorname{Clu}(f;z_0)$ is a continuum. We thus assume that there are infinitely (and thus countably) many Jordan curves in $\partial \Omega$, different from $b$, each bounding a closed Jordan region $B_i$, $i\in \N$. Now, we are going to use Moore's decomposition theorem \cite{MO} with the following formulation found in  {\cite[Corollary 6A, p.~56]{Daverman:decompositions}}:

\begin{theorem}\label{Decomposition}
Let $\{B_i\}_{i\in \N}$ be a sequence of closed Jordan regions in $\widehat {\C}$ with diameters converging to $0$, and let $U$ be an open set containing $\bigcup_{i\in \N} B_i$. Then there exists a continuous surjective map $h\colon \widehat \C\to \widehat \C$, which is the identity outside $U$, such that $h$ induces a decomposition of $\widehat \C$ into the sets $\{B_i\}_{i\in \N}$ and points. Specifically,  there exist countably many points $p_i\in U$, $i\in \N$, such that $h^{-1}(p_i)=B_i$ for each $i\in \N$, and the map $h\colon  \widehat\C\setminus \bigcup_{i\in \N} B_i \to \widehat\C\setminus \{p_i\colon i\in \N\}$ is bijective.
\end{theorem}

Note that our assumption on $\partial \Omega$ implies that $\diam(B_i)\to 0$ as $i\to\infty$. Moore's theorem then provides us with a map $h$ that is the identity in the closed unit disk and maps each $B_i$ to a point $p_i$. We may assume that $h(\infty)=\infty$. Moreover, $h$ is a homeomorphism when restricted to $\Omega$ (by the invariance-of-domain theorem) and maps the unit circle $b$ identically to itself. The boundary components of $h(\Omega)$ are precisely the images under $h$ of the boundary components of $\Omega$ by Proposition \ref{proposition:topological}. Hence, all boundary components of $h(\Omega)$ other than $b$ are single points. Now, Proposition \ref{proposition:topological2} will follow from the first part of the proof and the next lemma.

\begin{lemma}
\label{lemma:clusterequality}
We have
$$\operatorname{Clu}(f, z_0) = \operatorname{Clu}(f \circ h^{-1},z_0),$$
where the cluster set on the left is with respect to $\Omega$ and the cluster set on the right with respect to $h(\Omega)$.
\end{lemma}

\begin{proof}
Let $w \in \operatorname{Clu}(f, z_0)$. Then there is a sequence $z_n$ in $\Omega$ with $z_n \to z_0$ and $f(z_n) \to w$. By the continuity of $h$ on $\RiemannSphere$, we get that $h(z_n)$ is a sequence in $h(\Omega)$ with $h(z_n) \to h(z_0)=z_0$, and $f \circ h^{-1} (h(z_n)) \to w$. It follows that $w \in \operatorname{Clu}(f \circ h^{-1},z_0)$, which proves the direct inclusion.

For the reverse inclusion, let $w \in \operatorname{Clu}(f \circ h^{-1},z_0)$. Then there is a sequence $z_n$ in $\Omega$ with $h(z_n) \to z_0$ and $f \circ h^{-1} (h(z_n)) \to w$, i.e., $f(z_n) \to w$. If $z_n$ does not converge to $z_0$, then there is a subsequence $z_{n_j}$ that converges to some $z_0'$ with $z_0' \neq z_0$. Note that by Proposition \ref{proposition:topological} applied to $h^{-1}$, the point $z_0'$ necessarily belongs to the unit circle $b$. But $h(z_{n_j}) \to h(z_0')$, again by continuity, from which it follows that $h(z_0') = z_0$. Since $h$ is the identity on $b$, this gives $z_0'=z_0$, a contradiction. Thus, $z_n \to z_0$, and we get that $w \in \operatorname{Clu}(f, z_0)$, as required. This completes the proof of the lemma.
\end{proof}

Note that the cluster set $\operatorname{Clu}(f \circ h^{-1},z_0)$ is connected, by the first part of the proof applied to the homeomorphism $f \circ h^{-1}$ on $h(\Omega)$. Proposition \ref{proposition:topological2} then follows directly from Lemma \ref{lemma:clusterequality}.
\end{proof}

\begin{remark}
The conclusion of Proposition \ref{proposition:topological2} is not necessarily true if the boundary components of $\Omega$ are not assumed to be Jordan curves or points. To see this, consider a conformal map from the complement of a figure eight to the complement of a disk.
\end{remark}

If the diameters of the complementary components of $\Omega$ and $\Omega^*$ converge to $0$, then the map $f^*$ is continuous in a sense:

\begin{lemma}\label{lemma:convergenceCluster}
Suppose that for each $\varepsilon>0$ there are at most finitely many  complementary components of $\Omega$ and $\Omega^*$ with diameter greater than $\varepsilon$. Let $B$ be a component of $\widehat \C\setminus \Omega$ and $z_0\in \partial B$. If $z_n\in \widehat{\C} \setminus B$ is a sequence converging to $z_0$, then there exists a subsequence of $z_n$, still denoted by $z_n$, such that either
\begin{enumerate}[\upshape(i)]
\item $z_n\in \Omega$ for all $n\in \N$ and $f(z_n)$ converges to a point of $\clu(f,z_0)$, or
\item $z_n\in B_n$, where $B_n$ is a component of $\widehat{\C}\setminus \Omega$ for each $n\in \N$, and $B_n^*=f^*(B_n)$ converges to a point of $\clu(f,z_0)$ in the Hausdorff sense.
\end{enumerate}
In particular, if $B_n\subset \widehat{\C}\setminus \Omega$ converges to $z_0$, then $B_n^*$ is contained in arbitrarily small neighborhoods of $\clu(f,z_0)\subset \partial B^*$ as $n\to\infty$.
\end{lemma}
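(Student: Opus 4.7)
The plan is to split into the two cases by an initial subsequence extraction: after passing to a subsequence, either all $z_n$ lie in $\Omega$, or all $z_n$ lie in $\widehat{\C}\setminus\Omega$. In the first case, since $\widehat{\C}$ is compact we may further extract a subsequence with $f(z_n)\to w$ for some $w\in\widehat{\C}$. Because $f^{-1}\colon\Omega^*\to\Omega$ is continuous and $z_n\to z_0\in\partial\Omega$, the limit $w$ cannot lie in $\Omega^*$; hence $w\in\partial\Omega^*$. The identity $w\in\clu(f,z_0)$ is then immediate from the definition of the cluster set, so (i) holds.

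For the second case, write $z_n\in B_n$ where $B_n$ is the complementary component of $\Omega$ containing $z_n$; by hypothesis $B_n\neq B$. First I would argue that, after passing to a subsequence, the $B_n$ are pairwise distinct: the components of $\widehat{\C}\setminus\Omega$ are pairwise disjoint closed sets, so if infinitely many $z_n$ lay in a common component $B_0$, then $z_0\in \overline{B_0}=B_0$, forcing $B_0=B$ and contradicting $z_n\notin B$. Since only finitely many complementary components of $\Omega$ have diameter greater than any fixed $\varepsilon>0$, we conclude $\diam(B_n)\to 0$, and therefore $B_n\to\{z_0\}$ in the Hausdorff sense. The same hypothesis applied to $\Omega^*$, together with the fact that $f^*$ is a bijection of complementary components (Proposition \ref{proposition:topological}), gives $\diam(B_n^*)\to 0$, so after one more subsequence extraction $B_n^*$ converges in the Hausdorff sense to a single point $w\in\widehat{\C}$.

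The main point is to show $w\in\clu(f,z_0)$. For this I would approximate the $B_n$ from inside $\Omega$: for each $n$ pick $y_n\in\partial B_n\subset\partial\Omega$, and then choose $\zeta_n\in\Omega$ with $|\zeta_n-y_n|<1/n$ and with $f(\zeta_n)$ within Euclidean distance $1/n$ of $B_n^*$; such a $\zeta_n$ exists because, by the definition of $f^*$, every sequence in $\Omega$ converging to $y_n$ has $f$-image accumulating on $B_n^*=f^*(B_n)$. Then $\zeta_n\to z_0$ with $\zeta_n\in\Omega$, while $f(\zeta_n)\to w$ because $B_n^*\to\{w\}$. By the definition of the cluster set this yields $w\in\clu(f,z_0)$, proving (ii). The inclusion $\clu(f,z_0)\subset\partial B^*$ is a general observation: any point of $\clu(f,z_0)$ is a limit of $f(\zeta_n)$ with $\zeta_n\in\Omega$ and $\zeta_n\to z_0\in\partial B$, and such an $f$-image must accumulate on $\partial B^*$ by the definition of $f^*$ applied to the component $B$.

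Finally, the ``in particular'' assertion follows by applying case (ii) to any sequence of points $z_n\in B_n$ (note $B_n\neq B$ since $B_n\to\{z_0\}\subset\partial B$ and components are disjoint). Any Hausdorff subsequential limit of $B_n^*$ is, by the argument above, a single point of $\clu(f,z_0)$; combined with $\diam(B_n^*)\to 0$, this forces $B_n^*$ to be contained in arbitrarily small neighborhoods of $\clu(f,z_0)$ for $n$ large. I do not expect any serious obstacle in carrying this out; the only subtle point is the bookkeeping of subsequences and the use of the diameter hypothesis on both $\Omega$ and $\Omega^*$ to force both the source and target components to contract to points.
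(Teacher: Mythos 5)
Your proof is correct and follows essentially the same route as the paper: split into subsequences lying entirely in $\Omega$ or entirely in complementary components, use the diameter hypothesis on both $\Omega$ and $\Omega^*$ together with the injectivity of $f^*$ to force both $B_n$ and $B_n^*$ to shrink to points, and then approximate $z_n$ by nearby points of $\Omega$ whose $f$-images are forced close to $B_n^*$ by the boundary correspondence (Proposition \ref{proposition:topological}). The only differences are presentational: you spell out some steps (e.g.\ the subsequence extraction, the ``in particular'' clause) that the paper leaves implicit.
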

\begin{proof}
If there are infinitely many $n\in \N$ with $z_n\in \Omega$, we may assume that this is the case for all $n\in \N$ and then the first alternative occurs by the definition of the cluster set. If there are infinitely many $n\in \N$ with $z_n\notin \Omega$, then after passing to a subsequence we may find a sequence $B_n$ of distinct complementary components of $\Omega$ such that $z_n\in B_n$ for all $n\in \N$. Since $\diam(B_n)\to 0$, we can find points $z_n'\in \Omega$ with $|z_n-z_n'|\to 0$, and thus $z_n'\to z_0$. Moreover, if $z_n'$ is sufficiently close to $B_n$, then we may also have that $\dist(f(z_n'), B_n^*) \to 0$ by Proposition \ref{proposition:topological}. Since $\dist(f(z_n'),\clu(f,z_0)) \to 0$, we obtain $\dist(B_n^*,\clu(f,z_0))\to 0$. Since the diameters of $B_n^*$ converge to $0$, it follows that $B_n^*$ converges to a point of $\clu(f,z_0)$, after passing to a further subsequence.
\end{proof}

The following technical lemma asserts that if a cluster set $\clu(f,z_0)$ is ``big" then all ``crosscuts" shrinking to $z_0$ are mapped to sets having ``big" diameter. This lemma will be crucially used in Section \ref{sec5} in order to prove that a conformal map from a circle domain satisfying the quasihyperbolic condition onto another circle domain cannot blow up a boundary point to a boundary circle.

\begin{lemma}\label{lemma:crosscut}
Suppose that each boundary component of $\Omega$ and $\Omega^*$ is a Jordan curve or a single point and that for each $\varepsilon>0$ there are at most finitely many Jordan curves in $\partial \Omega$ and $\partial \Omega^*$ with diameter greater than $\varepsilon$.

Let $B$ be a complementary component of $\Omega$ and consider points $z_0\in \partial B$ and $w_0\in \clu(f,z_0)\subset B^*$. Suppose that $\gamma \subset \C \setminus \{z_0\}$ is a closed curve winding once around $z_0$.

Then for each $\eta>0$ there exists $\delta>0$ such that if $\gamma\subset B(z_0,\delta)$, then there exists a point $z\in \gamma\cap \Omega$  with $|f(z)-w_0|<\eta$.

In particular, for each $\eta>0$ there exists $\delta>0$ such that if $ \gamma \subset B(z_0,\delta)$, then there exist points $z_1,z_2\in \gamma\cap \Omega$ with $$|f(z_1)-f(z_2)| \geq \diam(\clu(f,z_0))-\eta.$$
\end{lemma}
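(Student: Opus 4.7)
The plan is to prove the first statement by contradiction, and to derive the ``in particular'' clause by applying it to two points of $\clu(f,z_0)$ at nearly maximal distance apart.

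Suppose for contradiction that there exist $\eta>0$ and closed curves $\gamma_n\subset B(z_0,1/n)\setminus\{z_0\}$, each winding once around $z_0$, with $f(\gamma_n\cap\Omega)\cap B(w_0,\eta)=\emptyset$ for every $n$. Let $V_n$ denote the connected component of $\C\setminus\gamma_n$ containing $z_0$. The winding hypothesis forces $V_n$ to be bounded (the unbounded component of $\C\setminus\gamma_n$ has winding number $0$ around $z_0$), and since $\gamma_n\subset B(z_0,1/n)$ the circle $\partial B(z_0,1/n)$ lies in that unbounded component, so $V_n\subset B(z_0,1/n)$. Using $w_0\in\clu(f,z_0)$, I pick $\tilde p_n\in\Omega$ with $|\tilde p_n-z_0|<\min(1/n,\dist(z_0,\gamma_n))$ and $|f(\tilde p_n)-w_0|<1/n$; the first bound guarantees $\tilde p_n\in V_n\cap\Omega$.

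The decisive step is topological: $V_n\cap\Omega$ is open in $\Omega$ with relative boundary contained in $\partial V_n\cap\Omega\subset\gamma_n\cap\Omega$, and because $f$ is a homeomorphism, the open set $f(V_n\cap\Omega)\subset\Omega^*$ has relative boundary contained in $f(\gamma_n\cap\Omega)$, which by assumption misses $B(w_0,\eta)$. Hence $f(V_n\cap\Omega)\cap B(w_0,\eta)$ is clopen in $B(w_0,\eta)\cap\Omega^*$, and is therefore a union of connected components of this space. Because an open subset of $\C$ has only countably many components, a pigeonhole argument lets me pass to a subsequence along which $f(\tilde p_n)$ lies in one fixed component $C^*$. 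Then $C^*\subset f(V_n\cap\Omega)$ for every $n$ in the subsequence, so $f^{-1}(C^*)\subset V_n\cap\Omega\subset B(z_0,1/n)$. Letting $n\to\infty$ gives $f^{-1}(C^*)\subset\{z_0\}$, which is impossible because $f^{-1}(C^*)$ is a nonempty open subset of $\Omega$ (it contains every $\tilde p_n$ in the subsequence).

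For the ``in particular'' clause, the cluster set $\clu(f,z_0)$ is compact, so I can pick $w_1,w_2\in\clu(f,z_0)$ with $|w_1-w_2|\geq\diam(\clu(f,z_0))-\eta/2$. Applying the first part with parameter $\eta/4$ at $w_1$ and at $w_2$ yields radii $\delta_1,\delta_2>0$, and setting $\delta=\min(\delta_1,\delta_2)$ I obtain, for any admissible $\gamma\subset B(z_0,\delta)$, points $z_i\in\gamma\cap\Omega$ with $|f(z_i)-w_i|<\eta/4$; the triangle inequality then gives $|f(z_1)-f(z_2)|\geq\diam(\clu(f,z_0))-\eta$. The main obstacle is really the clopen step: one must carefully verify that the topological boundary of $V_n\cap\Omega$ inside $\Omega$ is trapped in $\gamma_n\cap\Omega$ and is transported into $f(\gamma_n\cap\Omega)$ by the homeomorphism; once this is granted, the incompatibility between a fixed nonempty preimage $f^{-1}(C^*)$ and the shrinking balls $B(z_0,1/n)$ delivers the contradiction.
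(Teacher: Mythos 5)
The first statement and the ``in particular'' reduction are set up sensibly, and the topological observations about $V_n$, the relative boundary of $V_n\cap\Omega$, and the clopen-ness of $f(V_n\cap\Omega)\cap B(w_0,\eta)$ in $B(w_0,\eta)\cap\Omega^*$ are correct. However, there is a genuine gap at the ``pigeonhole'' step. Having only countably many connected components of $B(w_0,\eta)\cap\Omega^*$ does \emph{not} guarantee that infinitely many of the points $f(\tilde p_n)$ fall into one fixed component: with countably many boxes and a sequence of items, each item may well go into its own box. Since $f(\tilde p_n)\to w_0\in\partial\Omega^*$, the points $f(\tilde p_n)$ can perfectly well lie in pairwise distinct, shrinking components of $B(w_0,\eta)\cap\Omega^*$ accumulating at $w_0$. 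Nothing in your argument rules this out, and without a fixed $C^*$ valid for infinitely many $n$ you cannot conclude $f^{-1}(C^*)\subset B(z_0,1/n)$ for $n\to\infty$, so the contradiction evaporates. Note that with distinct components $C_n$ one only gets disjoint nonempty open sets $f^{-1}(C_n)\subset B(z_0,1/n)$, which is not contradictory.

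The paper's proof circumvents exactly this difficulty in a different way. It first reduces, by the Schoenflies theorem and Moore's decomposition theorem (Theorem~\ref{Decomposition}, collapsing all complementary components of $\Omega^*$ other than $B^*$ to points, and similarly for $\Omega$), to a situation where $B^*$ is a round disk (or a point) and all other complementary components are points; then, by (PT3), the relevant neighbourhood of $w_0$ in $\Omega^*$ is connected, so the component issue disappears. After that, instead of arguing by contradiction, the paper constructs a concrete path $\Gamma^*\colon[0,1)\to\Omega^*$ landing at $w_0$ (by joining the $w_n=f(z_n)$ inside a fixed annulus around $B^*$), and observes that $\Gamma=f^{-1}(\Gamma^*)$ accumulates at $z_0$; then any $\gamma$ winding once around $z_0$ and small enough must cross $\Gamma$ in the tail part whose image lies in $B(w_0,\eta)$ (using Lemma~\ref{lemma:winding} to transport the winding through Moore's map). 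If you want to salvage your clopen/pigeonhole idea, you would need to first perform a similar reduction so that $B(w_0,\eta)\cap\Omega^*$ has, after the reduction, a single component near $w_0$; without that reduction the pigeonhole step is unjustified.
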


\begin{proof}
Suppose first that $\partial \Omega$ has finitely many components. By Proposition \ref{proposition:topological}, $\partial \Omega^*$ also has finitely many components. Note that $B$ is either a single point $\{z_0\}$ or it is a closed Jordan region. In the latter case, using the Schoenflies theorem we may assume that $B$ is the closed unit disk $\br \D$. Again, using the Schoenflies theorem, if necessary, we assume that $B^*=\{w_0\}$ or $B^*= \br\D$. Suppose that $\delta$ is so small that $B(z_0,\delta)$ does not intersect any complementary components of $\Omega$, except for $B$.

Since $w_0\in \clu(f,z_0)$, there exists a sequence $z_n \in \Omega$ with $z_n\to z_0$ such that $w_n\coloneqq f(z_n) \to w_0$. Note that for sufficiently large $n$ the points $w_n$ and $w_{n+1}$ lie in an annulus of the form $1<|w|<R$ if $B^*=\br \D$ or $0<|w-w_0|<R$ if $B^*=\{w_0\}$, contained in $\Omega^*$; recall that $\Omega^*$ has finitely many complementary components. Therefore, $w_n$ and $w_{n+1}$ can be connected with a path that lies in $\Omega^*$ and has length at most $2|w_n-w_{n+1}|$. By connecting $w_n$ to $w_{n+1}$ with paths in $\Omega^*$ we may obtain a path $\Gamma^*\colon [0,1) \to \Omega^*$ such that $\Gamma^*(0)=w_1$,  $\lim_{t\to 1} \Gamma^*(t) =w_0$, and $\Gamma^*(t_n)=w_n$, where $t_n\in [0,1)$ is a sequence converging to  $1$. Observe that the set $\Gamma\coloneqq f^{-1}(\Gamma^*)\subset \Omega$ intersects arbitrarily small neighborhoods of $z_0$, since it contains $z_n$ for all $n\in \N$.

We let $\eta>0$ be arbitrary, and fix a large $N\in \N$ such that $\Gamma^*|_{[t_N,1)} \subset B(w_0,\eta)$. Let $\delta>0$ be so small that $f^{-1}(\Gamma^*|_{[0,t_N]})\cap \br B(z_0,\delta)=\emptyset$. We now consider a closed curve $\gamma\subset B(z_0,\delta) \setminus \{z_0\}$ winding once around $z_0$; this implies that there exists a component $U$ of $\widehat{\C}\setminus \gamma$ containing $z_0$ but not $\infty$.  Since $\Gamma$ is connected and intersects the sets $U$ and $\widehat{\C} \setminus \br U$, we must have $\Gamma \cap  \gamma\neq \emptyset$. In fact, there exists $t>t_N$ such that $z\coloneqq f^{-1}(\Gamma^*(t)) \in \gamma$. It follows that $f(z)=\Gamma^*(t)\in B(w_0,\eta)$, as desired.

For the general case, we suppose again that each of $B,B^*$ is either a single point or the unit disk by using the Schoenflies theorem. Then we use Moore's theorem \ref{Decomposition} as in the proof of Proposition \ref{proposition:topological2} and obtain a continuous map $h$ of the sphere fixing $\infty$ such that $h$ is the identity in $B$ and in a neighborhood of $\infty$, it is a homeomorphism from $\Omega$ onto the domain $h(\Omega)$ and maps each complementary component of $\Omega$ different from $B$ to a point. Note that the boundary components of $\Omega$ and $h(\Omega)$ are in correspondence by Proposition \ref{proposition:topological} and therefore the complementary components of $h(\Omega)$ are single points, except possibly for $B$. We similarly obtain a continuous map $g$ from $\widehat{\C}$ onto itself that is a homeomorphism of $\Omega^*$ onto $g(\Omega^*)$, fixing $B^*$ and $\infty$ and mapping each complementary component of $\Omega^*$ that is different from $B^*$ to a point. Then the composition $\widetilde f=g\circ f\circ h^{-1}$ is a homeomorphism from $h(\Omega)$ onto $g(\Omega^*)$. The complementary components of $h(\Omega), g(\Omega^*)$ that are different from $B,B^*$ form  a totally disconnected set that is rel.\ closed in $\widehat{\C}\setminus B,\,\widehat{\C}\setminus B^*$, respectively. By Lemma \ref{lemma:extension} we conclude that $\widetilde{f}$ extends to a homeomorphism of $\widehat{\C}\setminus B$ onto $\widehat{\C}\setminus B^*$. The continuity of $h$ and $g$ implies that $w_0\in \clu(\widetilde f,z_0)$.

We fix $\eta>0$ and let $\gamma \subset B(z_0,\delta)$ be a closed curve as in the statement, winding once around $z_0$, where $\delta>0$ is to be determined. The following lemma implies that $h(\gamma)\subset \C\setminus \{z_0\}$ is a closed curve that winds once around $z_0$.
\begin{lemma}\label{lemma:winding}
Suppose that $h\colon \widehat{\C}\to \widehat{\C}$ is a continuous map such that $h^{-1}(z_0)=\{z_0\}$ and $h^{-1}(\infty)=\{\infty\}$ (in particular, the map $h$ fixes the points $z_0$ and $\infty$). Moreover, suppose that $h$ is equal to the identity in a neighborhood of $\infty$. If a closed curve $\gamma\subset \widehat{\C}\setminus \{z_0,\infty\}$ winds once around $z_0$, then $h(\gamma)$ also winds once around $z_0$.
\end{lemma}

Now we use the first case of the proof, applied to the homeomorphism $\widetilde f\colon \widehat{\C}\setminus B\to \widehat{\C}\setminus B^*$. It follows that for each $\eta'>0$ there exists $\delta'>0$ such that if $h(\gamma)\subset B(z_0,\delta')$, then there exists a point $z'\in h(\gamma)\setminus B$ such that $|\widetilde f(z')-w_0|<\eta'$. Note that if $\delta$ is sufficiently small, then by the continuity of $h$ we have $h(\gamma)\subset B(z_0,\delta')$. We remark that $z'$ does not lie necessarily in $h(\Omega)$. However, by the continuity of $\widetilde f$, we may find a point $h(z)\in h(\gamma)\cap h(\Omega)$ near $z'$ such that $|g(f(z))-w_0|=|\widetilde f(h(z))-w_0|<\eta'$. It remains to show that for each given $\eta>0$ we can choose a small $\eta'>0$ so that the above inequality implies that $|f(z)-w_0|<\eta$.

For the latter, it suffices to have that for each $\eta>0$ one can choose $\eta'>0$ such that if $|w-w_0|<\eta'$ for $w\in g(\Omega^*)$ then $|g^{-1}(w)-w_0| <\eta$. This follows immediately from Lemma \ref{lemma:clusterequality}, where instead of $f$ and $h$ one uses the identity $\id$ and  $g$.
\end{proof}

\begin{proof}[Proof of Lemma \ref{lemma:winding}]
We argue using homotopy. Namely, there exists a homotopy $\gamma_t \subset \widehat{\C}\setminus \{z_0,\infty\}$ such that $\gamma_0=\gamma$ and $\gamma_1$ is contained in a neighborhood of $\infty$, where $h$ is the identity. The winding number is invariant under homotopy (see \cite[Theorem 4.12, p.~90]{Burckel}), hence $\gamma_1$ still winds once around $z_0$. Since $h$ is the identity on $\gamma_1$, $h(\gamma_1)$ also winds once around $z_0$. Now, using the homotopy $h(\gamma_t)\subset \widehat{\C} \setminus \{z_0,\infty\}$ we  see that $h(\gamma_0)=h(\gamma)$ winds once around $z_0$.
\end{proof}

\section{Circles map to circles}
\label{sec4}
Our goal in this section is to show that a conformal map from a circle domain satisfying the quasihyperbolic condition of Theorem \ref{mainthm1} onto another circle domain cannot ``squeeze" a boundary circle to a point. This will be proved in Lemma \ref{lemma:circles to circles}.

\subsection{Fatness}\label{sec:fatness}

Before stating the lemma, we will need the notion of fatness of a set. A {measurable} set $B\subset \C$ is $c$-\textit{fat} for some constant $c>0$ if
\begin{equation}
\label{Definition:Fat}
\area (B \cap B(z,r)) \geq c r^2
\end{equation}
for all $z \in B$ and $0<r \leq \operatorname{diam}(B)$. A collection of sets is \textit{uniformly fat} if there exists a uniform $c>0$ such that each of the sets in the collection is $c$-fat. We also allow points to be considered fat (for any $c>0$). Note that circular disks in the plane are uniformly fat. The most important consequences of fatness that we will use repeatedly are the following:

\begin{enumerate}[(F1)]
\item Suppose that $B\subset \C$ is a $c$-fat, closed, connected set, and assume it intersects two concentric circles $\partial B(z,r), \partial B(z,R)$ with $0<r<R$. Then there exists a constant $C>0$ depending only on $c$ such that
\begin{align*}
\area(B\cap (B(z,R)\setminus B(z,r))) \geq C(R-r)^2.
\end{align*}
To see that, note that by the connectedness of $B$ there exists a point $y\in B\cap \partial B(z,(r+R)/2)$. Then $B(y, (R-r)/2) \subset B(z,R)\setminus B(z,r)$, {and} so
\begin{align*}
\area(B\cap (B(z,R)\setminus B(z,r)))  \geq c \frac{(R-r)^2}{4}.
\end{align*}

\item For a ball $B(z,r)$ and {a connected set} $B$ as above define
$$
d_r(B)\coloneqq\mathcal H^1(\{s\in [0,r]: B\cap \partial B(z,s)\neq \emptyset\}).
$$
Then (F1) implies that
\begin{align*}
\area(B\cap B(z,r)) \gtrsim d_r(B)^2,
\end{align*}
where the implicit constant depends only on $c$.

\item If $B$ is as above and $B\subset B(z,r)$, then $d_r(B)\simeq \diam(B)$, with implicit constants depending only on $c$. Indeed, trivially we have $d_r(B)\leq \diam(B)$, and also the fatness implies that $\area(B)\simeq \diam(B)^2$ with implicit constant depending only on $c$. On the other hand, since $B\subset B(z,r)$, the area of $B$ can also be bounded from above by a multiple of $\diam(B)\cdot d_r(B)$. Hence, $\diam(B)^2\lesssim \diam(B)d_r(B)$, which yields the conclusion.

\item Fatness is invariant under bi-Lipschitz maps. Namely, if $B$ is $c$-fat and $T\colon B\to B^*$ is $L$-bi-Lipschitz, then $B^*$ is $c'$-fat for a constant $c'$ depending only on $c$ and $L$. Moreover, fatness is invariant under scalings: if $s>0$ and $B$ is $c$-fat, then $sB=\{sx: x\in B\}$ is also $c$-fat. Combining these two facts with Koebe's distortion theorem (Lemma \ref{quasi:koebe distortion}), we obtain that fatness is invariant under conformal maps \textit{in sufficiently small scales}:

\begin{flushleft}
Let $f$ be a conformal map on a domain $D\subsetneq \C$ and consider a point $z_0\in D$, $0<r\leq \dist(z_0,\partial D)$, and $0<c_0<1$. If $B\subset B(z_0,c_0r)$ is a $c$-fat set, then $f(B)$ is a $c'$-fat set for a constant $c'$ depending only on $c$ and $c_0$.
\end{flushleft}

\end{enumerate}

A straightforward consequence of fatness of the complementary components of a domain is the following:

\begin{lemma}\label{lemma:countably many components}
If the complementary components of a domain $\Omega^*$ with $\infty\in \Omega^*$ are uniformly fat, then for each $\varepsilon>0$ there exist at most finitely many components $B^*$ of $\widehat{\C}\setminus \Omega^*$ with $\diam(B^*)>\varepsilon$. In particular, at most countably many complementary components of $\Omega^*$ can be non-degenerate.
\end{lemma}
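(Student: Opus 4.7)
The plan is to prove this by a direct area-packing argument, using fatness to translate a diameter lower bound into an area lower bound. First, since $\infty \in \Omega^*$, the set $\widehat{\C} \setminus \Omega^*$ is a compact subset of $\C$ and therefore contained in some ball $B(0,R)$. The complementary components of $\Omega^*$ are then pairwise disjoint subsets of $B(0,R)$, so the sum of their areas is at most $\pi R^2$.

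Next, I would fix $\varepsilon > 0$ and use the fatness definition \eqref{Definition:Fat} to obtain a uniform lower bound on the area of any complementary component $B^*$ with $\diam(B^*) > \varepsilon$. Concretely, choosing any $z \in B^*$ and applying \eqref{Definition:Fat} with $r = \varepsilon$ (which is admissible because $\varepsilon < \diam(B^*)$) gives
$$\area(B^*) \geq \area\bigl(B^* \cap B(z,\varepsilon)\bigr) \geq c\varepsilon^2.$$
Combining this uniform lower bound with the disjointness from the previous step, the number of complementary components with $\diam(B^*) > \varepsilon$ is at most $\pi R^2 / (c\varepsilon^2)$, which is finite. This proves the first assertion.

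For the second assertion, a non-degenerate complementary component is exactly one for which $\diam(B^*) > 1/n$ for some $n \in \N$. Hence the collection of non-degenerate complementary components is a countable union of finite sets, and therefore countable.

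This is a short and essentially routine argument; I do not anticipate any real obstacle. The only point requiring mild care is verifying that the radius $r = \varepsilon$ is permissible in the fatness inequality \eqref{Definition:Fat}, which is ensured by the hypothesis $\diam(B^*) > \varepsilon$.
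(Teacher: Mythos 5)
Your argument is correct and takes essentially the same area-packing approach as the paper's proof: the paper likewise notes that the disjoint complementary components satisfy $\area(B^*)\simeq \diam(B^*)^2$ by fatness and compares the sum of areas to the area of a large ball $B(0,R)$. Your version simply spells out the same estimate in slightly more explicit detail.
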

\begin{proof}
The components $B^*$ of $\C\setminus \Omega^*$ are disjoint and they satisfy $\area(B^*)\simeq \diam(B^*)^2$ by fatness. Comparing the sum of the areas of the components of $\widehat{\C}\setminus \Omega^*$ to the area of a big ball $B(0,R)$ containing them gives the desired conclusion.
\end{proof}

\subsection{Circles map to circles}

The next lemma is the heart of this section.

\begin{lemma}\label{lemma:circles to circles}
Let $\Omega$ be a circle domain with $\infty\in \Omega$ and let $f$ be a conformal map from $\Omega$ onto another domain $\Omega^{*}$ with $f(\infty)=\infty \in \Omega^*$. Suppose that $\Omega$ satisfies the quasihyperbolic condition and that the complementary components of $\Omega^{*}$ are uniformly fat.

If $E\subset \partial \Omega$ is a non-degenerate continuum, then $\operatorname{Clu}(f,E)\subset \partial \Omega^*$ cannot be a single point. In particular, $f^{*}$ cannot map a boundary circle of $\Omega$ onto a single point boundary component of $\Omega^*$.
\end{lemma}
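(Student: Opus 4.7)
Suppose for contradiction that $\operatorname{Clu}(f,E)=\{w_0\}$ for some $w_0\in\partial\Omega^*$. Since $\partial\Omega$ is a disjoint union of geometric circles and isolated points and $E$ is connected and non-degenerate, $E$ must be contained in a single boundary circle $b$ of $\Omega$; let $B$ denote the closed disk bounded by $b$ and $B^*=f^*(B)$ the corresponding complementary component of $\Omega^*$, so that $w_0\in\partial B^*$. Pick two distinct points $z_1,z_2\in E$ and set $d=|z_1-z_2|>0$. The strategy is to force $d\lesssim C(\Omega)\,\eta$ for every sufficiently small $\eta>0$, yielding the desired contradiction.

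Fix $\eta>0$ small. Using Lemma \ref{lemma:convergenceCluster} together with the compactness of $E$ and the definition of the cluster set, I would first choose a neighborhood $U$ of $E$ in $\widehat{\C}$ such that $f(U\cap\Omega)\subset B(w_0,\eta)$ and $f^*(B')\subset B(w_0,\eta)$ for every complementary component $B'$ of $\Omega$ lying in $U$. Next, I would pick points $p_1,p_2\in U\cap\Omega$ near $z_1$ and $z_2$, join them by a short segment $\sigma\subset U$, and apply Corollary \ref{quasi:detours:cor} to $\sigma$ to produce a transboundary chain $\mathcal C=(\gamma_1,B_1,\ldots,\gamma_{m-1},B_{m-1},\gamma_m)$ connecting $p_1$ to $p_2$, with the $\gamma_j$'s meeting pairwise disjoint sets of Whitney cubes and with each $B_j$ intersected by $\sigma$ (so $B_j\subset U$, and hence $B_j^*\subset B(w_0,\eta)$). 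Lemma \ref{quasi:koebe} then gives $\int_Q|f'|^2\simeq\diam(f(Q))^2$ for each Whitney cube $Q$, so the disjointness across chain pieces forces the image $L^2$-bound
\[
\sum_j\diam(f(\gamma_j))^2 \;\lesssim\; \area(f(U\cap\Omega)) \;\le\; \pi\eta^2,
\]
while the uniform fatness of the complementary components of $\Omega^*$ (so $\area(B_j^*)\simeq\diam(B_j^*)^2$) and disjointness give $\sum_j\diam(B_j^*)^2\lesssim\eta^2$.

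On the preimage side, the chain endpoints telescope to give $\sum_j\diam(\gamma_j)+\sum_j\diam(B_j)\ge|p_1-p_2|\to d$. The contradiction should then come from a transfer between the preimage and image chain sizes via conformal distortion: Koebe's theorem (Lemma \ref{quasi:koebe distortion}) relates $\diam(\gamma_j)$ to $\diam(f(\gamma_j))$ and the distance to $\partial\Omega^*$, and a parallel bilipschitz-style comparison handles the $B_j$'s using the fatness of their images.

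The main obstacle I anticipate is this last step: converting the $L^2$ image bound into a violation of the macroscopic lower bound $\gtrsim d$ on the preimage side without paying the $\sqrt m$ factor from a naive Cauchy–Schwarz, since the number of chain pieces $m$ may grow as $U$ shrinks. I would attempt to resolve this by splitting $\mathcal C$ into ``coarse'' and ``fine'' pieces: the coarse part is controlled directly by the image $L^2$-bound and by fatness, whereas the fine Whitney cubes (those with large $j(Q)$) are absorbed using the quasihyperbolic summability $\sum_Q\ell(Q)^2 j(Q)^2<\infty$ from the proof of Lemma \ref{quasi:lemma length}, together with the shadow bound $\sum_Q s(Q)^2<\infty$ of Lemma \ref{quasi:shadow sum}, which limits the total Euclidean contribution of deep Whitney cubes and the complementary components they see. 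Carrying through this two-scale estimate should yield $d\lesssim C(\Omega)\eta$, a contradiction on letting $\eta\to 0$.
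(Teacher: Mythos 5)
There is a genuine gap, and it is structural: the lower bound in your argument sits on the wrong side of the map. Your chain $\mathcal{C}$ joins two nearby points $p_1,p_2\in U\cap\Omega$, and everything in the image (the paths $f(\gamma_j)$ and the components $B_j^*$) lies inside $B(w_0,\eta)$. So the image of the chain is a continuum joining $f(p_1)$ to $f(p_2)$ of diameter at most $2\eta$, and your Cauchy--Schwarz/area bounds $\sum_j\diam(f(\gamma_j))^2\lesssim\eta^2$ and $\sum_j\diam(B_j^*)^2\lesssim\eta^2$ are merely upper bounds consistent with that. The only lower bound you have, $\sum_j\diam(\gamma_j)+\sum_j\diam(B_j)\gtrsim d$, lives entirely in the domain. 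Nothing ties the two sides together: a conformal map can perfectly well take a chain of domain-diameter $d$ to a continuum of tiny image diameter (that is exactly the squeezing phenomenon you are trying to rule out). Koebe's distortion theorem controls $|f'|$ cube-by-cube but does not forbid the derivative from being uniformly tiny along the whole chain, and for the complementary components there is no conformal map between $B_j$ and $B_j^*$ at all, so there is no ``bilipschitz-style comparison'' to invoke. The two-scale split you sketch (coarse vs.\ fine cubes, quasihyperbolic summability, shadow sums) also cannot rescue this, because those sums control total $\ell^2$-type quantities over the Whitney decomposition; they give no pointwise or per-chain lower bound on image diameters.

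The paper's proof gets around this by arranging for the lower bound to land on the image side. Instead of a chord between two boundary points, it uses the one-parameter family of radial segments $\gamma_\theta(t)=te^{i\theta}$ emanating outward from $E\subset\partial\Sigma$, truncated at the first exit from $W\cup H$. By construction the image of each such segment (together with the intervening components $B_i^*$) is a continuum from a point near $0=\operatorname{Clu}(f,E)$ out to $\partial B(0,r)$, so it has image diameter at least comparable to $r$ --- this is the content of Claim \ref{lemma:cirlces claim}, and it is the lower bound your argument lacks. The contradiction then comes from integrating over $\theta$ and showing, via Cauchy--Schwarz, the shadow estimate of Lemma \ref{quasi:shadow sum}, and fatness in the image, that the resulting upper bound on the integrated quantity is $o(r)$. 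In short: the family of escape paths and the integration over $\theta$ are essential, not optional, and a single transboundary chain between two interior points near $E$ does not encode the information needed for a contradiction.

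A smaller issue to note in passing: a straight segment $\sigma$ joining points $p_1,p_2$ near two points of a boundary circle $b$ will typically pass through the closed disk $B$ bounded by $b$, which would violate the hypotheses of Corollary \ref{quasi:detours:cor}; one would need to route $\sigma$ around $B$, and once you do that you are again confronting the same structural problem above.
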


Recall that $\Omega$ satisfies the quasihyperbolic condition if there exists a ball $B(0,R)$ containing all complementary components of $\Omega$ and a point $x_0\in D\coloneqq B(0,R)\cap \Omega$ such that the  {inequality}
\begin{align*}
\int_D k(x,x_0)^2 \,dx<\infty
\end{align*}
holds.

\begin{proof}[Proof of Lemma \ref{lemma:circles to circles}]

We argue by contradiction, assuming that there exists a disk $\Sigma$ in the complement of $\Omega$ and an arc $E\subset \partial \Sigma$ such that $\clu(f,E)$ is a single point. Since the quasihyperbolic condition is invariant under translations and scalings (see Remark \ref{quasi:invariant}), we may assume that $\Sigma$ is the unit disk $\br \D$, and by postcomposing with a translation, we may also assume that $\clu(f,E)$ is the point $0\in \partial \Sigma^* \subset \partial \Omega^*$, where $\Sigma^*=f^*(\Sigma)$. We write $E=\{e^{i\theta}\colon \theta\in [\theta_1,\theta_2] \}$. We fix a small $r>0$, and let $H^*$ be the union of the complementary components of $\Omega^*$ intersecting $\br B(0,r)$, excluding $\Sigma^*$. We also define $H=(f^{*})^{-1} (H^*)$, $W^*=B(0,r)\cap \Omega^*$, and $W=f^{-1}(W^*)$. Here, $H$ is the union of the complementary components of $\Omega$ corresponding to components of $H^*$. However, to avoid introducing new notation, $H$ and $H^*$ will also be used to denote the corresponding collections of complementary components of $\Omega$ and $\Omega^*$, respectively.

We fix $\theta\in [\theta_1,\theta_2]$ and consider a ray $\gamma_\theta(t)=te^{i\theta}$, $1\leq t\leq t_0$, where $t_0>1$ is the first exit time of $\gamma_\theta$ from $W\cup H$, i.e., the first time $t_0>1$ such that $\gamma_\theta(t_0)$ intersects the closed set $\partial (W\cup H)$. If such a time does not exist, then one can find a sequence of points $z_n= \gamma_{\theta}(t_n)$, $t_n\searrow 1$, such that $z_n\notin W\cup H$ for all $n\in \N$. The diameters of the complementary components of $\Omega$ and $\Omega^*$ converge to $0$ by Lemma \ref{lemma:countably many components}. Using Lemma \ref{lemma:convergenceCluster} we may pass to a subsequence still denoted by $z_n$ such that either $z_n\in \Omega$ for all $n\in \N$ and $f(z_n) \to 0=\clu(f,E)$, or $z_n \in B_n \subset \widehat{\C}\setminus \Omega$ and $B_n^*=f^*(B_n) \to 0$. In the first case we have eventually $f(z_n)\in W^*$, so $z_n\in W$, a contradiction. In the second case we have eventually $B_n^*\subset H^*$, so $z_n\in  B_n \subset H$, which is again a contradiction.

We remark that $\gamma_\theta(t_0)$ either lies in $\Omega$, in which case we have $f(\gamma_\theta(t_0)) \in \partial B(0,r)$, or it lies in a complementary component $B \subset  {\widehat{\C}} \setminus \Omega$ such that $B^* \cap \partial B(0,r)\neq \emptyset$. Indeed, if $\gamma_\theta(t_0)$ lies in a complementary component $B$ with $B^*\subset B(0,r)$ then by (PT1) there exists a Jordan region $U^*$ with $B^* \subset U^*\subset B(0,r)$ and $\partial U^* \subset \Omega^*$. The preimage $\partial U\subset W$ of $\partial U^*$ bounds a Jordan region $U\subset W\cup H$ containing $B$; see Lemma \ref{lemma:surround}. Then $\gamma_\theta(t_0)$ would lie in the interior of $W\cup H$ and this contradicts the choice of $t_0$.

Now, the idea is to obtain some estimates based on consideration of the set
\begin{align*}
f(\gamma_\theta \cap W)\cup \bigcup_{\substack{B\in H \\ B\cap \gamma_\theta\neq \emptyset}} B^* \cup \{0\}
\end{align*}
``connecting" $0$ to $\partial B(0,r)$. The main difficulty however is that the set $\{B\in H : B\cap \gamma_\theta\neq \emptyset\}$ might be uncountable, and this makes estimates impossible. For this reason, we use the transboundary chain provided by Corollary \ref{quasi:detours:cor}. Specifically, if $\gamma_\theta(t_0)\in \Omega$, then we can obtain a transboundary chain consisting of paths $\gamma_1,\dots,\gamma_m$ and components $B_1,\dots,B_{m-1} \subset \widehat{\C}\setminus \Omega$ connecting the endpoints of $\gamma_\theta$. The paths $\gamma_i$ are contained in $\Omega$, except possibly for their endpoints, and also have several other important properties that we are going to use. If $\gamma_\theta(t_0)\in B_m$ for some component $B_m$ of $\widehat{\C}\setminus \Omega$, then we use Remark \ref{quasi:detours:cor:rem} instead to obtain a suitable transboundary chain of the form  $(\gamma_1,\dots,\gamma_m,B_m)$. Based on the properties of this transboundary chain, we claim:

\begin{claim}\label{lemma:cirlces claim}
The transboundary chain $\mathcal{C}=(\gamma_1, B_1, \dots, \gamma_{m-1}, B_{m-1}, \gamma_{m})$ given by Corollary \ref{quasi:detours:cor} (or $\mathcal{C}=(\gamma_1,\dots,\gamma_m,B_m)$ given by Remark \ref{quasi:detours:cor:rem}) satisfies

\begin{align*}
r\lesssim \sum_{\gamma_i \in \mathcal{C}} \sum_{\substack{Q\in \mathcal W(D)\\Q\cap  \gamma_i\neq \emptyset}} \ell(Q) \mint{-}_Q |f'| + \sum_{B_i \in \mathcal{C}} d_r(B_i^*),
\end{align*}
where
$$d_r(B_i^*)\coloneqq \mathcal H^1(\{s\in [0,r]:B_i^*\cap \partial B(0,s)\neq \emptyset\})$$
is the radial diameter of $B_i^*$.
The constants in the above inequality are uniform and do not depend on $r$, $\theta$, or the transboundary chain $\mathcal{C}$ given by Corollary \ref{quasi:detours:cor}.

\end{claim}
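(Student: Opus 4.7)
The strategy is to build, from the transboundary-chain machinery of Corollary \ref{quasi:detours:cor}, an image-side continuum $\mathcal{K}$ joining the point $0$ to $\partial B(0,r)$, and then to bound its radial extent from below by $r$ and from above by the right-hand side of the claim. Concretely, I would first apply Corollary \ref{quasi:detours:cor} to the straight segment $\gamma_\theta$ — in the case where $\gamma_\theta(t_0)$ lies in a complementary component $B_m$, I would truncate at the first entry of $\gamma_\theta$ into $B_m$ and then re-append $B_m$ via Remark \ref{quasi:detours:cor:rem} — to obtain a chain $\mathcal{C} = (\gamma_1, B_1, \dots, B_{m-1}, \gamma_m)$, possibly augmented with $B_m$ at the end. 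By Lemma \ref{lemma:connected}, the associated set $\mathcal{K} = \bigcup_i \overline{f(\gamma_i)} \cup \bigcup_i B_i^*$ is then a continuum.

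Next, I would verify that $\mathcal{K}$ meets both $\{0\}$ and $\partial B(0,r)$. Since the initial endpoint of $\gamma_1$ is $a_1 = e^{i\theta} \in E$ and $\clu(f,E) = \{0\}$, the cluster set $\clu(f,a_1)$ is forced to equal $\{0\}$, so any sequence in $\gamma_1 \cap \Omega$ tending to $a_1$ has $f$-image tending to $0$; hence $0 \in \overline{f(\gamma_1)} \subset \mathcal{K}$. At the other end, either $\gamma_\theta(t_0) \in \Omega$, in which case $f(\gamma_\theta(t_0)) \in \partial B(0,r)$ lies in $\overline{f(\gamma_m)}$, or $B_m^* \cap \partial B(0,r) \neq \emptyset$ and $B_m^* \subset \mathcal{K}$. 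Since $z \mapsto |z|$ is continuous, the radial projection of the connected set $\mathcal{K}$ is a connected subset of $[0,\infty)$ containing $0$ and a value equal to $r$, hence contains $[0,r]$. Because radial projection is $1$-Lipschitz, taking one-dimensional measure gives
\[
r \;\leq\; \sum_{\gamma_i \in \mathcal{C}} \mathcal{H}^1(f(\gamma_i)) \;+\; \sum_{B_i \in \mathcal{C}} d_r(B_i^*),
\]
where the cluster-set contributions $\overline{f(\gamma_i)} \setminus f(\gamma_i)$ are absorbed into the adjacent $B_i^*$ terms (or into the null singleton $\{0\}$).

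Finally, each length $\mathcal{H}^1(f(\gamma_i)) \leq \int_{\gamma_i}|f'|\,ds$ is split cube by cube. For a Whitney cube $Q$ intersecting $\gamma_i$, property (vi) of Corollary \ref{quasi:detours:cor} gives $\mathcal{H}^1(\gamma_i \cap Q) \lesssim \ell(Q)$, while Koebe's distortion theorem (Lemma \ref{quasi:koebe distortion}), together with property (2) of the Whitney decomposition (which places $Q$ well inside a disk in $\Omega$), yields $\sup_Q |f'| \simeq \mint{-}_Q |f'|$ with universal constants. Combining gives $\int_{\gamma_i \cap Q}|f'|\,ds \lesssim \ell(Q)\,\mint{-}_Q |f'|$, and summing over $Q$ and then over $i$ yields the claim. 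I expect the main obstacle to be the correct set-up of the transboundary chain for $\gamma_\theta$ — in particular, verifying the hypothesis of Corollary \ref{quasi:detours:cor} that the interior of the path avoid the complementary components that contain its endpoints in their boundary: avoidance of $\Sigma$ is immediate since $|\gamma_\theta(t)|>1$ for $t>1$, and avoidance of the terminal component $B_m$ in the second case is arranged by the truncation above. The rest of the proof (the radial-projection argument and the Koebe/Whitney reduction) is then essentially routine.
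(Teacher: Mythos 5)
Your proposal is correct, and the radial--projection step is a genuine, cleaner alternative to the paper's argument. The paper proves the inequality by first observing that the continuum $\mathcal{K}$ of Lemma \ref{lemma:connected} joins $0$ to $\partial B(0,r)$, hence $r\leq\diam(\mathcal K)\leq\sum_i\diam(f(\gamma_i))+\sum_i\diam(B_i^*)$; it then needs to replace each $\diam(B_i^*)$ by $d_r(B_i^*)$, which requires (F3), i.e.\ the uniform fatness of the $B_i^*$, and (F3) in turn requires $B_i^*\subset B(0,r)$. This forces the paper into a case analysis: if some $B_k^*$ meets $\partial B(0,r)$ one must truncate the chain at the first such index $k$, estimate $d_r(B_k^*)\geq r-r_1$ separately, and treat the second alternative $\mathcal C=(\gamma_1,\dots,\gamma_m,B_m)$ analogously. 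Your approach replaces the diameter estimate by the observation that the radial projection $\pi(z)=|z|$ is a $1$-Lipschitz map, so $\pi(\mathcal K)$ is a connected subset of $[0,\infty)$ containing $0$ and $r$, hence contains $[0,r]$, whence
\[
r\leq\mathcal H^1\bigl(\pi(\mathcal K)\cap[0,r]\bigr)\leq\sum_i\mathcal H^1(f(\gamma_i))+\sum_i\mathcal H^1\bigl(\pi(B_i^*)\cap[0,r]\bigr)=\sum_i\mathcal H^1(f(\gamma_i))+\sum_i d_r(B_i^*),
\]
with the cluster-set remainders $\overline{f(\gamma_i)}\setminus f(\gamma_i)$ correctly absorbed into the adjacent $B_i^*$ (or the singleton $\{0\}$) exactly as you say. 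This reads off $d_r(B_i^*)$ directly, so neither fatness nor the case split on whether $B_i^*\subset B(0,r)$ is needed at this stage. The final reduction $\int_{\gamma_i\cap Q}|f'|\,ds\lesssim\ell(Q)\mint{-}_Q|f'|$, via Corollary \ref{quasi:detours:cor}(vi) and Koebe distortion, is the same as in the paper (which appeals to Lemma \ref{quasi:koebe} for $\max_Q|f'|\simeq\mint{-}_Q|f'|$). One small remark: the claim takes the transboundary chain as already given, so the preliminary discussion in your proposal of how to set up the chain (and the truncation at first entry into $B_m$) is not strictly part of the claim's proof; but your attention to the hypothesis of Corollary \ref{quasi:detours:cor} there is apt.
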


We shall use the claim now and prove it later. By condition (v) of Corollary \ref{quasi:detours:cor}, for a fixed $\varepsilon>0$, we may consider a transboundary chain $\mathcal{C}$ obtained from $\gamma_\theta$ such that if $Q\in \mathcal W(D)$ is a Whitney cube with $Q\cap \gamma_\theta=\emptyset$ but $Q\cap \gamma_i\neq \emptyset$ for some $i\in\{1,\dots,m\}$, then $\ell(Q)\leq \varepsilon$ and $SH(Q)\cap \gamma_\theta \neq \emptyset$. Moreover, condition (iv) asserts that the paths $\gamma_i$, $i=1,\dots,m$, intersect disjoint sets of Whitney cubes, and condition (ii) implies that each component $B_i\in \mathcal{C}$ is intersected by $\gamma_\theta$. Using Claim \ref{lemma:cirlces claim} {as well as} the above properties we obtain
\begin{align}\label{lemma:circles:inequality}
r\lesssim \sum_{\substack{Q\cap \br W\neq \emptyset\\Q\cap  \gamma_\theta\neq \emptyset}} \ell(Q) \mint{-}_Q |f'|+ \sum_{\substack{SH(Q)\cap \gamma_\theta\neq \emptyset\\ \ell(Q)\leq \varepsilon}} \ell(Q) \mint{-}_Q |f'|+\sum_{\substack{B\in H \\ B\cap  \gamma_\theta\neq \emptyset}} d_r(B^*).
\end{align}
Note that the last sum {contains at most countably many non-zero terms}, in view of Lemma \ref{lemma:countably many components}. The above inequality persists, if $\gamma_\theta$ denotes the full ray (instead of the truncated one) from $0$ to $\infty$.

For a set $A\subset \C$ and $\theta\in [0,2\pi]$ we define $\x_{A\cap \gamma_\theta}=1$ if $A\cap \gamma_\theta\neq \emptyset$ and otherwise $\x_{A\cap \gamma_\theta}=0$. If the set $A$ is compact, then the function $\theta \mapsto \x_{A\cap \gamma_\theta}$ is upper semi-continuous, and thus measurable. Note that the cubes $Q$, the complementary components $B$, and the shadows $SH(Q)$ are compact; see Lemma \ref{quasi:shadow:compact}. Therefore, the functions $\theta\mapsto \x_{Q\cap \gamma_\theta}$, $\theta\mapsto \x_{B\cap \gamma_\theta}$, and $\theta\mapsto \x_{SH(Q)\cap \gamma_\theta}$ are measurable.

Upon integrating \eqref{lemma:circles:inequality} over $[\theta_1,\theta_2]$ and applying Fubini's theorem, we obtain:
\begin{align}\label{lemma:circles A123}
\begin{aligned}
(\theta_2-\theta_1) r &\lesssim \sum_{Q\cap \br W\neq \emptyset} \ell(Q)\mint{-}_Q |f'| \int_{0}^{2\pi} \x_{Q\cap \gamma_\theta} \, d\theta \\
&\quad \quad + \sum_{\substack{Q\in \mathcal W(D)\\ \ell(Q)\leq \varepsilon}} \ell(Q)\mint{-}_{Q}|f'| \int_{0}^{2\pi} \x_{SH(Q)\cap \gamma_\theta} \, d\theta\\
&\quad\quad + \sum_{B\in H} d_r(B^*) \int_{0}^{2\pi} \x_{B\cap \gamma_\theta} \,d\theta\\
&\eqqcolon A_1+A_2+A_3.
\end{aligned}
\end{align}
We now treat each of the terms separately.

For $A_1$ note that
$$\int_{0}^{2\pi} \x_{Q\cap \gamma_\theta} \, d\theta \lesssim \diam(Q)\simeq \ell(Q),$$
since $Q\cap \br \D =\emptyset$. Thus,
\begin{align*}
A_1&\lesssim \sum_{Q\cap \br W\neq \emptyset} \int_Q |f'| = \int_{\bigcup_{Q\cap \br W\neq \emptyset} Q}|f'| \\
&\lesssim \area\left(\bigcup_{Q\cap \br W\neq \emptyset}Q \right)^{1/2}\cdot \left(\int_{\bigcup_{Q\cap \br W\neq \emptyset} Q}|f'|^2  \right)^{1/2}\\
&\simeq\area\left(\bigcup_{Q\cap \br W\neq \emptyset}Q \right)^{1/2}\cdot \area\left( \bigcup_{Q\cap \br W\neq \emptyset} f(Q)\right)^{1/2},
\end{align*}
since $f$ is conformal, and $|f'|^2$ is the Jacobian of $f$. As $r\to 0$, we have $W^*\to \{0\}$, and thus $W$ is contained in arbitrarily small neighborhoods of $\br \D$. This implies that the first term above is $o(1)$. Now we treat the second term. By Lemma \ref{quasi:koebe}, we have $\diam(f(Q))\simeq \dist(f(Q),\partial \Omega^*)$. On the other hand, if $Q\cap \br W\neq \emptyset$, then $f(Q)$ intersects $\br B(0,r)$, so $\dist(f(Q),\partial \Omega^*) \leq r$ because $0\in \partial \Sigma^*\subset  \partial \Omega^*$. It follows that $\diam(f(Q))\lesssim r$, and thus $f(Q)\subset B(0,cr)$ for a uniform constant $c>0$, whenever $Q\cap \br W\neq \emptyset$. We therefore obtain
\begin{align*}
\area\left( \bigcup_{Q\cap \br W\neq \emptyset} f(Q)\right) \leq \area(B(0,cr)) \simeq r^2.
\end{align*}
Summarizing, we have
\begin{align}\label{lemma:circles A_1}
A_1= o(r).
\end{align}

Next, we treat $A_2$. Exactly as in the computation for $A_1$, note that
\begin{align*}
 \int_{0}^{2\pi}  \x_{SH(Q)\cap \gamma_\theta}\, d\theta \lesssim \diam(SH(Q)) =s(Q),
\end{align*}
because the shadows $SH(Q)\subset \partial \Omega$ lie outside $\D$. Therefore,
\begin{align*}
A_2&\lesssim \sum_{\substack{Q\in  \mathcal W(D)\\ \ell(Q)\leq \varepsilon}} \ell(Q)\left(\mint{-}_{Q}|f'|^2\right)^{1/2} s(Q) \\
&\lesssim \left(\int_{\bigcup_{ \ell(Q)\leq \varepsilon}Q} |f'|^2 \right)^{1/2} \left(\sum_{\substack{Q\in \mathcal W(D)\\ \ell(Q)\leq \varepsilon}} s(Q)^2\right)^{1/2}.
\end{align*}
Recall that $r>0$ was fixed, and $\varepsilon>0$ was arbitrary. As in the computation for $A_1$, the first term represents the area of a subset of $f(D)$, and the latter is bounded. The second term converges to $0$ as $\varepsilon\to 0$ by Lemma \ref{quasi:shadow sum} and the quasihyperbolic condition. Hence, $A_2\to 0$ as $\varepsilon\to 0$.

Finally, we compute a bound for the term $A_3$. As before, the integral term is bounded by $\diam(B)$, so
\begin{align*}
A_3\lesssim \sum_{B\in H} d_r(B^*) \diam(B)\leq \left(\sum_{B^*\in H^*} d_r(B^*)^2\right)^{1/2}\left(\sum_{B\in H} \diam(B)^2\right)^{1/2}.
\end{align*}
Using  property (F2) from Subsection \ref{sec:fatness} we obtain $d_r(B^*)^2\lesssim \area(B^*\cap B(0,r))$. Therefore, the first sum is bounded by $\area(B(0,r))^{1/2}\simeq r$. Since each $B\in H$ is a circle or a point, we trivially have $\diam(B)^2\simeq \area(B)$. Therefore, the second term is comparable to
\begin{align*}
\area\left( \bigcup_{B\in H} B \right)^{1/2}.
\end{align*}
As $r\to 0$, all components $B\in H$ are contained in arbitrarily small neighborhoods of $\partial \D$; this follows from Lemma \ref{lemma:convergenceCluster} applied to $f^{-1}$. Hence the above area term is $o(1)$. Summarizing, $A_3= o(r)$.

Therefore, by \eqref{lemma:circles A123} and \eqref{lemma:circles A_1}, the vanishing of $A_2$, and the preceding paragraph we have
\begin{align*}
r\lesssim A_1+A_3 = o(r),
\end{align*}
a contradiction, {since all the implicit multiplicative constants are independent of $r$}.
\end{proof}

\begin{proof}[Proof of Claim \ref{lemma:cirlces claim}]
The proof will be based on properties (i), (iii), and (vi) in Corollary \ref{quasi:detours:cor}. That is, $\mathcal C$ is a transboundary chain connecting $\gamma_\theta(1)$ to $\gamma_{\theta}(t_0)$ and each of the paths $\gamma_1,\dots,\gamma_m$ of the chain $\mathcal C$ is piecewise linear. We will split the proof in two cases: $\gamma_\theta(t_0)\in \Omega$, in which case $f(\gamma_\theta(t_0)) \in \partial B(0,r)$, and $\gamma_\theta(t_0)$ lies in a complementary component $B_m \subset  {\widehat{\C}} \setminus \Omega$ such that $B_m^* \cap \partial B(0,r)\neq \emptyset$.

Suppose first that $\gamma_\theta(t_0)\in \Omega$, so $f(\gamma_\theta(t_0)) \in \partial B(0,r)$. Then the chain $\mathcal C$ is of the form $(\gamma_1,B_1,\dots,\gamma_{m-1},B_{m-1},\gamma_m)$ and by (i), $\gamma_1$ starts at $a_1=\gamma_\theta(1) \in E$ and $\gamma_m$ terminates at $b_m=\gamma_{\theta}(t_0)$. Let us assume that $B_i^*\cap \partial B(0,r)=\emptyset$ for all $i=1,\dots,m-1$. In this case, $B_i^*\subset B(0,r)$ by Lemma \ref{lemma:surround}, so using property (F3) from Subsection \ref{sec:fatness} we deduce that $\diam(B_i^*) \simeq d_r(B_i^*)$  for $i=1,\dots,m-1$. By Lemma \ref{lemma:connected} the set
\begin{align*}
\mathcal K \coloneqq \bigcup_{\gamma_i\in \mathcal C} \br{f(\gamma_i)} \cup \bigcup_{B_i\in \mathcal C} B_i^*
\end{align*}
is a a continuum joining $0 \in \clu(f,a_1)\subset \clu(f,E)$ to $f(b_m)\in \partial B(0,r)$. Therefore,
\begin{align*}
r\leq \diam(\mathcal K) &\leq \sum_{\gamma_i\in \mathcal C} \diam (f(\gamma_i))+ \sum_{B_i\in \mathcal C} \diam(B_i^*)  \\
&\lesssim \sum_{\gamma_i\in \mathcal C} \diam (f(\gamma_i))+ \sum_{B_i\in \mathcal C} d_r(B_i^*).
\end{align*}
Note that $Q\cap\gamma_i$, if non-empty, is contained in the union of two line segments by Corollary \ref{quasi:detours:cor}(vi). Combining this with Lemma \ref{quasi:koebe}, we have
\begin{align*}
\diam (f(\gamma_i)) \leq \int_{\gamma_i} |f'| ds  \lesssim \sum_{\substack{ Q\in \mathcal W(D)\\ Q\cap \gamma_i\neq \emptyset}}  \ell(Q) \max_{Q} |f'| \simeq \sum_{\substack{ Q\in \mathcal W(D)\\ Q\cap \gamma_i\neq \emptyset}}  \ell(Q) \mint{-}_{Q}|f'|.
\end{align*}
This completes the proof in this case.

If $B_k^* \cap \partial B(0,r)\neq \emptyset$ for some $k=1,\dots,m-1$, we assume that $k$ is the first such index, and consider the transboundary chain $\mathcal C'= (\gamma_1,B_1,\dots,\gamma_{k-1}, B_{k-1},\gamma_k)\subset \mathcal C$. Note that $\br{f(\gamma_k)}$ intersects the set $B_k^*$, since $\gamma_k$ intersects $B_k$. It follows from Lemma \ref{lemma:connected} that the set
\begin{align*}
\mathcal K' \coloneqq \bigcup_{\gamma_i\in \mathcal C'} \br{f(\gamma_i)} \cup \bigcup_{B_i\in \mathcal C'} B_i^*
\end{align*}
is a continuum joining $0$ to a point of a circle $\partial B(0,r_1)$, $0<r_1\leq r$, where $\partial B(0,r_1)\cap B_k^*\neq \emptyset$. Note that $d_r(B_k^*)\geq r-r_1$. Since $B_i^*\subset B(0,r)$ for all $i=1,\dots,k-1$, we can argue for $\mathcal K'$ as in the previous case of the proof and obtain
\begin{align*}
r = r_1+(r-r_1)\lesssim \sum_{\gamma_i\in \mathcal C'} \sum_{\substack{ Q\in \mathcal W(D)\\ Q\cap \gamma_i\neq \emptyset}}  \ell(Q) \mint{-}_{Q}|f'|+ \sum_{B_i\in \mathcal C'} d_r(B_i^*)+ d_r(B_k^*).
\end{align*}
This is the desired estimate.

Finally, we discuss the second case that $\mathcal C=(\gamma_1,\dots,\gamma_m,B_m)$, where $B_m\cap \partial B(0,r)\neq \emptyset$. This is done exactly as the previous paragraph, by considering a chain $\mathcal C'=(\gamma_1,B_1,\dots,B_{k-1},\gamma_k)\subset \mathcal C$ such that $B_i^*\subset B(0,r)$ for all $i=1,\dots, k-1$ and $B_{k}^* \cap \partial B(0,r)\neq \emptyset$.
\end{proof}

\section{Points map to points}
\label{sec5}

In this section we prove that a conformal map from a circle domain satisfying the quasihyperbolic condition onto another circle domain must map point boundary components to point boundary  components. In fact, we prove a more general result.

\begin{lemma}\label{lemma:points}
Let $\Omega$ be a circle domain with $\infty\in \Omega$ and let $f$ be a conformal map from $\Omega$ onto another domain $\Omega^{*}$ with $f(\infty) = \infty \in \Omega^*$. Suppose that $\Omega$ satisfies the quasihyperbolic condition and that the complementary components of $\Omega^{*}$ are uniformly fat (closed) Jordan regions or points.

Then for each $z_0\in \partial \Omega$ the cluster set $\clu(f,z_0)\subset \partial \Omega^*$ cannot be a non-degenerate continuum. In particular, $f^{*}$ cannot map a point boundary component of $\Omega$ onto a non-degenerate component of $\partial \Omega^*$.
\end{lemma}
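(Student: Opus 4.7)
The plan is to argue by contradiction along the lines of the proof of Lemma~\ref{lemma:circles to circles}, using circles $\gamma_\rho = \partial B(z_0,\rho)$ of small radius centered at $z_0$ in place of rays from the unit disk. Suppose $C \coloneqq \clu(f,z_0)$ is a non-degenerate continuum of diameter $D>0$, and let $B_0$ be the complementary component of $\Omega$ containing $z_0$ (either a closed disk or the point $\{z_0\}$). Since $C \subset f^*(B_0) \eqqcolon B_0^*$ is non-degenerate, the hypothesis on $\Omega^*$ forces $B_0^*$ to be a non-degenerate closed Jordan region. For a fixed $\eta \in (0,D)$, Lemma~\ref{lemma:crosscut} produces $\rho_0 > 0$ such that for every $\rho \in (0, \rho_0)$ the circle $\gamma_\rho$ contains points $z_1, z_2 \in \gamma_\rho \cap \Omega$ with $|f(z_1) - f(z_2)| \geq D - \eta$. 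For small $\rho$ the set $\gamma_\rho \setminus B_0$ is connected and contains both $z_1$ and $z_2$, so I choose a sub-arc $\alpha_\rho \subset \gamma_\rho \setminus B_0$ joining $z_1$ to $z_2$; the critical feature is that $\alpha_\rho$ is disjoint from $B_0$.

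Next I apply Corollary~\ref{quasi:detours:cor} (with parameter $\varepsilon > 0$) to $\alpha_\rho$, obtaining a transboundary chain $\mathcal{C}_\rho = (\gamma_1^\rho, B_1^\rho, \dots, \gamma_m^\rho)$ whose complementary components $B_i^\rho$ are all distinct from $B_0$, by property (ii). Lemma~\ref{lemma:connected} provides a continuum $\mathcal{K}_\rho = \bigcup_i \overline{f(\gamma_i^\rho)} \cup \bigcup_i B_i^{*,\rho}$ joining $f(z_1)$ to $f(z_2)$, and combining the piecewise linearity from Corollary~\ref{quasi:detours:cor}(vi) with Lemma~\ref{quasi:koebe} exactly as in Claim~\ref{lemma:cirlces claim} yields
\begin{align*}
D - \eta \lesssim \sum_{Q \cap \alpha_\rho \neq \emptyset} \ell(Q) \mint{-}_Q |f'| + \sum_{\substack{SH(Q) \cap \alpha_\rho \neq \emptyset \\ \ell(Q) \leq \varepsilon}} \ell(Q) \mint{-}_Q |f'| + \sum_{\substack{B \cap \alpha_\rho \neq \emptyset \\ B \neq B_0}} \diam(B^*).
\end{align*}
Integrating over $\rho \in (0, \rho_0)$ and bounding each indicator integral by the radial extent of the underlying set yields $\rho_0(D - \eta) \lesssim A_1 + A_2(\varepsilon) + A_3$, where $A_1$ and $A_3$ do not depend on $\varepsilon$.

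The terms $A_1$ and $A_2$ are dispatched as in Lemma~\ref{lemma:circles to circles}: Cauchy--Schwarz gives $A_1 \leq \area(U)^{1/2} \area(f(U))^{1/2}$ where $U$ is the union of relevant Whitney cubes; since $\area(U) \lesssim \rho_0^2$ while $f(U) \subset \Omega^* \cap N_{\delta(\rho_0)}(C)$ with $\delta(\rho_0) \to 0$ and $\Omega^* \cap C = \emptyset$, one obtains $A_1 = o(\rho_0)$. Cauchy--Schwarz against the tail $\sum_{\ell(Q) \leq \varepsilon} s(Q)^2$ (finite by Lemma~\ref{quasi:shadow sum}) together with $\int_D |f'|^2 \leq \area(\Omega^*)$ shows $A_2(\varepsilon) \to 0$ as $\varepsilon \to 0$, uniformly in $\rho_0$.

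The step I expect to be the main obstacle is the estimate for $A_3$: unlike in Lemma~\ref{lemma:circles to circles} where the cluster set was a single point and $d_r(B^*)$ provided a clean radial area bound in a small disk, here $C$ is a non-degenerate continuum and no single center plays that role. Cauchy--Schwarz reduces $A_3$ to $(\sum \diam(B)^2)^{1/2} (\sum \diam(B^*)^2)^{1/2}$; fatness and disjointness of the relevant $B$'s give $\sum \diam(B)^2 \lesssim \area(\bigcup B) \lesssim \rho_0^2$ and $\sum \diam(B^*)^2 \lesssim \sum \area(B^*)$. The resolution is that the chain avoids $B_0$, so the relevant $B^*$ are disjoint from the closed Jordan region $B_0^* \supset C$; by Lemma~\ref{lemma:convergenceCluster} they lie in $N_{\delta(\rho_0)}(C) \setminus B_0^*$, and since $C \subset B_0^*$ with $B_0^*$ closed, this set has area tending to $0$ as $\rho_0 \to 0$. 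Thus $A_3 = o(\rho_0)$. Fixing $\rho_0$ and letting $\varepsilon \to 0$ eliminates $A_2$, after which letting $\rho_0 \to 0$ produces the contradiction $\rho_0(D - \eta) \lesssim o(\rho_0)$.
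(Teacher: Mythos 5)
Your argument follows the paper's proof almost exactly: a contradiction scheme using circles $\gamma_\rho$ around $z_0$, Lemma~\ref{lemma:crosscut} to find $z_1,z_2$ on $\gamma_\rho$ with $|f(z_1)-f(z_2)|$ large, a sub-arc avoiding $B_0$ fed into Corollary~\ref{quasi:detours:cor}, and then the same $A_1+A_2+A_3$ decomposition after integrating. The one substantive difference is purely organizational: the paper integrates over the annulus $\rho\in[r/2,r]$, while you integrate over $(0,\rho_0)$; both work for the same reasons. Your explicit remark that $\alpha_\rho\subset\gamma_\rho\setminus B_0$ (so that $B_0$ cannot enter the chain) is a useful clarification of a point the paper leaves implicit.

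One step in your $A_3$ bound is not quite right as written. You claim
\[
A_3\lesssim\Big(\sum\diam(B)^2\Big)^{1/2}\Big(\sum\diam(B^*)^2\Big)^{1/2}\quad\text{and}\quad\sum\diam(B)^2\lesssim\area\Big(\bigcup B\Big)\lesssim\rho_0^2.
\]
But a complementary disk $B\neq B_0$ can intersect $B(0,\rho_0)$ while having $\diam(B)\gg\rho_0$, so $\area(\bigcup B)$ is only $o(1)$ as $\rho_0\to 0$, not $O(\rho_0^2)$. This would give $A_3=o(1)$ rather than the needed $A_3=o(\rho_0)$, and then dividing by $\rho_0$ does not produce a contradiction. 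The repair (used by the paper via the quantity $\delta_r(B)$) is to keep the radial extent
\[
\delta_{\rho_0}(B)=\mathcal H^1\big(\{s\in(0,\rho_0):B\cap\partial B(0,s)\neq\emptyset\}\big)\leq\rho_0
\]
instead of replacing it with $\diam(B)$: by property (F2), $\delta_{\rho_0}(B)^2\lesssim\area(B\cap B(0,\rho_0))$, and disjointness yields $\sum\delta_{\rho_0}(B)^2\lesssim\area(B(0,\rho_0))\simeq\rho_0^2$ unconditionally. Combined with $\sum\diam(B^*)^2\lesssim\sum\area(B^*)=o(1)$ (from convergence to $\partial B_0^*$ via Lemma~\ref{lemma:convergenceCluster} and disjointness from $B_0^*$), this gives $A_3=o(\rho_0)$ as required. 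With this one correction, your proof matches the paper's.
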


The proof is very similar to the proof of Lemma \ref{lemma:circles to circles}, so we omit some of the details.

\begin{proof}
Suppose that the cluster set of the point $ 0 \in \Sigma \subset \partial \Omega$ is a non-degenerate continuum $E^* \subset \Sigma^*$, with $\diam(E^*)=1$, after rescaling.  We fix a small $r>0$, and consider the annulus $A_r= A(0;r/2,r)\coloneqq \{z: r/2<|z|<r\}$. We denote by $H$ the union of the complementary components of $\Omega$ intersecting $\overline{A_r}$ excluding $\Sigma$, and by $H^*$ the union of the corresponding components of $\widehat{\C}\setminus\Omega^*$. Here $H$ and $H^*$ will also be used to denote the corresponding collections of components. We also define $W=A_r\cap \Omega$ and $W^*=f(A_r\cap \Omega)$.

We fix $\rho\in [r/2,r]$, and consider a circle $\gamma_\rho(t)=\rho e^{it}$, $t\in [0,2\pi]$. By Lemma \ref{lemma:crosscut}, if $r$ is sufficiently small, then there exist points $z_1,z_2\in \gamma_\rho \cap \Omega$ such that
$$|f(z_1)-f(z_2)|\geq \diam (E^*)/2=1/2.$$
We now apply, for a fixed $\varepsilon>0$, Corollary \ref{quasi:detours:cor} to obtain from $\gamma_\rho$ a transboundary chain $\mathcal C$ connecting $z_1$ and $z_2$. Using Lemma \ref{lemma:connected}, we obtain the analog of Claim \ref{lemma:cirlces claim}:
\begin{align*}
1/2 \leq |f(z_1)-f(z_2)|\lesssim \sum_{\gamma_i \in \mathcal{C}} \sum_{\substack{Q\in \mathcal W(D)\\Q\cap  \gamma_i\neq \emptyset}} \ell(Q) \mint{-}_Q |f'| + \sum_{B_i \in \mathcal{C}} \diam(B_i^*).
\end{align*}
The implicit constant is independent of $r,\rho,\varepsilon$ and of the transboundary chain $\mathcal C$ obtained from $\gamma_\rho$.

As in the proof of Lemma \ref{lemma:circles to circles}, the properties of the chain $\mathcal C$ from Corollary \ref{quasi:detours:cor} yield for each $\varepsilon>0$
\begin{align*}
1\lesssim \sum_{\substack{Q\in \mathcal W(D)\\Q\cap  \gamma_\rho\neq \emptyset}} \ell(Q) \mint{-}_Q |f'|+ \sum_{\substack{SH(Q)\cap \gamma_\rho\neq \emptyset\\ \ell(Q)\leq \varepsilon}} \ell(Q) \mint{-}_Q |f'|+\sum_{\substack{B\in H \\ B\cap  \gamma_\rho\neq \emptyset}} \diam(B^*).
\end{align*}
Now, we integrate over $\rho \in [r/2,r]$ and we obtain
\begin{align*}
r&\lesssim \sum_{Q\cap \br W\neq \emptyset} \ell(Q)\mint{-}_{Q} |f'| \ell(Q) + \sum_{\substack{Q\in \mathcal W(D)\\ \ell(Q)\leq \varepsilon}} \ell(Q)\mint{-}_{Q} |f'|s(Q)+ \sum_{B\in H} \diam(B^*)\delta_r(B)\\
&\eqqcolon A_1+A_2+A_3,
\end{align*}
where $\delta_r(B)\coloneqq \mathcal H^1(\{s\in [r/2,r]: B\cap \partial B(0,s)\neq \emptyset\})$.

The middle term $A_2$ vanishes as $\varepsilon\to 0$, {because of} the quasihyperbolic condition, exactly as in the proof of Lemma \ref{lemma:circles to circles}. The first term $A_1$ is bounded from above by
\begin{align*}
\area\left(\bigcup_{Q\cap \br W\neq \emptyset}Q \right)^{1/2}\cdot \area\left( \bigcup_{Q\cap \br W\neq \emptyset} f(Q)\right)^{1/2}.
\end{align*}
Note that the Whitney cubes intersecting $W=A_r\cap \Omega$ must have sidelength bounded by a constant multiple of $r$, since $0\in \partial \Omega$. Hence, the first factor is $O(r)$. The second factor  is $o(1)$ as $r\to 0$, since $W^*\to \partial \Sigma^*$ as $r\to 0$. Hence, $A_1=o(r)$. Finally, $A_3$ is bounded above by
\begin{align*}
\left(\sum_{B^*\in H^*} \diam(B^*)^2\right)^{1/2}\left(\sum_{B\in H} \delta_r(B)^2\right)^{1/2}.
\end{align*}
By the fatness condition, the first factor is bounded by a constant multiple of the area of the components $B^*\in H^*$. Since all these components are contained in arbitrarily small neighborhoods of $\Sigma^*$ as $r\to 0$ (by Lemma \ref{lemma:convergenceCluster}), it follows that the contribution here is $o(1)$. The second factor, by the fatness of disks and property (F1), is bounded by a constant multiple of the square root of the area of the annulus $A_r$, so the contribution is $O(r)$. Hence, $A_2=o(r)$.

Summarizing, $r\lesssim A_1+A_3= o(r)$, a contradiction.
\end{proof}

\section{Continuous extension}
\label{sec6}

We now have everything we need in order to prove homeomorphic extension to the boundary.

\begin{theorem}\label{thm:continuous}
Let $\Omega$ be a circle domain with $\infty\in \Omega$ and let $f$ be a conformal map from $\Omega$ onto another domain $\Omega^{*}$ with $f(\infty) = \infty \in \Omega^*$. Suppose that $\Omega$ satisfies the quasihyperbolic condition and that the complementary components of $\Omega^{*}$ are uniformly fat (closed) Jordan regions or points. Then $f$ extends to a homeomorphism from $\br \Omega$ onto $\overline{\Omega^*}$.
\end{theorem}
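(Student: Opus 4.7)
The plan is to establish continuity, boundary correspondence with surjectivity, and injectivity of the boundary extension $\bar f$, in turn.

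First, the complementary components of $\Omega$ (circles and points) and of $\Omega^{*}$ are all uniformly fat, so Lemma \ref{lemma:countably many components} gives that only finitely many of them have diameter exceeding any $\varepsilon>0$. The hypotheses of Proposition \ref{proposition:topological2} therefore hold for $\Omega$, so $\clu(f;z_0)$ is a continuum for every $z_0\in\partial\Omega$. Lemma \ref{lemma:points} forbids this continuum from being non-degenerate, so it must be a single point; setting $\bar f(z_0)$ equal to this point defines a continuous extension $\bar f\colon\overline\Omega\to\overline{\Omega^{*}}$. By Proposition \ref{proposition:topological} the boundary correspondence $f^{*}$ is a bijection between boundary components. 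Lemma \ref{lemma:circles to circles} forbids non-degenerate components of $\partial\Omega$ from being sent to points, and Lemma \ref{lemma:points} forbids points from being sent to non-degenerate components, so $f^{*}$ carries circles to Jordan-curve components and points to points. In particular $\bar f(\partial\Omega)=\partial\Omega^{*}$, and compactness of $\bar f(\overline\Omega)$ combined with the density of $\Omega^{*}=f(\Omega)$ in $\overline{\Omega^{*}}$ yields surjectivity.

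Since $\overline\Omega$ is compact and $\overline{\Omega^{*}}$ is Hausdorff, it remains only to prove injectivity of $\bar f$. As $f$ is injective on $\Omega$, as $\bar f(\Omega)=\Omega^{*}$ is disjoint from $\bar f(\partial\Omega)\subset\partial\Omega^{*}$, and as the bijectivity of $f^{*}$ ensures that points on distinct boundary components of $\Omega$ have distinct images, we are reduced to showing that $\bar f|_b\colon b\to b^{*}$ is injective for each boundary circle $b$ of $\Omega$, where $b^{*}=f^{*}(b)$ is the image Jordan curve. Let $B, B^{*}$ be the closed disk and Jordan region bounded by $b, b^{*}$. Applying the Riemann mapping theorem and Carathéodory's theorem to the simply connected complements produces homeomorphisms $\phi\colon\overline{\widehat{\C}\setminus B}\to\overline{\D}$ and $\phi^{*}\colon\overline{\widehat{\C}\setminus B^{*}}\to\overline{\D}$, conformal in the interiors, with $\phi(b)=\phi^{*}(b^{*})=\partial\D$. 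Then $g\coloneqq\phi^{*}\circ f\circ\phi^{-1}\colon\phi(\Omega)\to\phi^{*}(\Omega^{*})$ is a conformal bijection between subdomains of $\D$, extending continuously to $\partial\D$ and sending $\partial\D$ into itself; injectivity of $\bar f|_b$ is equivalent to injectivity of $\bar g|_{\partial\D}$.

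The main tool is Schwarz reflection. At any $\zeta\in\partial\D$ that is not an accumulation point of the complementary components of $\phi(\Omega)$ in $\D$, the map $g$ is conformal on a one-sided round neighborhood of $\zeta$ and continuous up to $\partial\D$ with values on $\partial\D$. Reflecting via $z\mapsto 1/\bar z$ produces a holomorphic extension $\tilde g$ to a full two-sided neighborhood of $\zeta$ in $\widehat{\C}$. If the local degree of $\tilde g$ at $\zeta$ were greater than one, then generic values close to $\bar g(\zeta)$ in $\D$ would admit at least two $g$-preimages in $\phi(\Omega)$ near $\zeta$, contradicting injectivity of $g$. Hence the local degree equals one, so $\bar g$ is locally a real-analytic orientation-preserving diffeomorphism at every such good $\zeta$. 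Combined with Lemma \ref{lemma:circles to circles} (which, transferred through $\phi$ and $\phi^{*}$, forbids $\bar g$ from collapsing a non-degenerate sub-arc of $\partial\D$ to a single point), this locally injective orientation-preserving behavior at good points is upgraded to strict monotonicity, and hence injectivity, of $\bar g$ on all of $\partial\D$; a strictly monotone continuous surjection of $S^{1}$ onto itself is a homeomorphism.

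The hard part will be to propagate monotonicity across those points of $\partial\D$ at which the complementary components of $\phi(\Omega)$ accumulate, since Schwarz reflection does not apply there directly; one combines continuity of $\bar g$ with the fatness of the components (via Lemma \ref{lemma:countably many components}) and the non-collapsing property of Lemma \ref{lemma:circles to circles} to carry the monotonicity across the accumulation set, completing the proof.
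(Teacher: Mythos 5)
Your treatment of continuity and of the reduction to a single boundary circle coincides with the paper's. The gap is in the injectivity step. Your Schwarz-reflection argument only establishes local injectivity of $\bar g$ at points $\zeta\in\partial\D$ that are not accumulation points of the complementary components of $\phi(\Omega)$ inside $\D$, and you yourself flag the propagation across the accumulation set as ``the hard part'' without resolving it. This is not a minor loose end: for a circle domain with point boundary components accumulating densely on $\partial\Sigma$ (which is entirely typical and not excluded by the hypotheses), \emph{every} point of $\partial\D$ is an accumulation point of the complementary components of $\phi(\Omega)$, so the set of ``good'' points can be empty and the reflection argument never gets off the ground. Moreover, even where good points are dense, local injectivity on a dense set plus non-collapsing of arcs does not by itself rule out the identification of two separated boundary points through a ``fold'' across a bad point; one would need a genuine monotonicity argument that you have not supplied.

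The paper avoids this entirely. Having established the continuous surjection $f\colon\overline\Omega\to\overline{\Omega^*}$, it supposes two distinct points of a boundary circle $\partial\Sigma$ map to the same $p\in\partial\Sigma^*$, and then applies Proposition~\ref{proposition:topological2} \emph{to the inverse map} $f^{-1}\colon\Omega^*\to\Omega$ (legitimate since the complementary components of $\Omega^*$ are uniformly fat Jordan regions or points, so their diameters tend to zero by Lemma~\ref{lemma:countably many components}). This gives that $E:=\clu(f^{-1};p)$ is a non-degenerate continuum in $\partial\Omega$. By the already-proved continuity of $f$ on $\overline\Omega$, one has $\clu(f;E)=\{p\}$, which contradicts Lemma~\ref{lemma:circles to circles}. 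This pivot — using the cluster set of $f^{-1}$ rather than a local reflection — is the key idea your proposal is missing, and it works uniformly without any case distinction between accumulation and non-accumulation points.
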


The proof is based on Lemmas \ref{lemma:circles to circles} and \ref{lemma:points}.

\begin{proof}
First we prove that $f$ extends continuously to $\br \Omega$. Let $\Sigma$ be a component of $\widehat{\C} \setminus \Omega$. If $\Sigma$ is a point, then $\Sigma^*=f^*(\Sigma)$ has to be a singleton, by Lemma \ref{lemma:points}. Hence, in this case $f$ extends continuously to $\Omega\cup \Sigma$. If $\Sigma$ is a disk, then by Lemma \ref{lemma:circles to circles} the component $\Sigma^*$ is  a Jordan region. Suppose that $f$ does not extend continuously to a point $p\in \partial \Sigma$. Then, as $\Omega\ni z\to p$, the images $f(z)$ have to accumulate in at least two distinct points of $\partial \Sigma^*$, from which we deduce that the cluster set $\operatorname{Clu}(f;p)$ is a non-degenerate continuum, by Proposition \ref{proposition:topological2}. This contradicts Lemma \ref{lemma:points}. Therefore, $f$ extends continuously to $\br \Omega$. Note that $f(\br \Omega)=\overline{\Omega^*}$, since any point of $\partial \Omega^*$ is the accumulation point of a sequence $f(z_n)$, where $\Omega\ni z_n \to \partial \Omega$.

Now, we wish to show that $f$ is injective on $\br \Omega$. Since $\br \Omega$ is compact, it will then follow that we have a homeomorphism from $\br \Omega$ onto $\overline{\Omega^*}$, as desired. Note that each component of $\partial \Omega$ is mapped continuously onto a component of $\partial \Omega^*$, and the correspondence of the components is one-to-one, because $f^*$ is bijective by Proposition \ref{proposition:topological}. Since point boundary components are mapped to point boundary components, it follows that $f$ is injective there. Hence, it suffices to prove that $f$ is injective when restricted to a circle component $\partial \Sigma$ of $\partial \Omega$.

Suppose that this is not the case. Then there exist two distinct points on $\partial \Sigma$ that are mapped to a single point $p\in \partial \Sigma^*$. Since the complementary components of $\Omega^*$ are uniformly fat, their diameters converge to $0$ by Lemma \ref{lemma:countably many components}. We can therefore apply Proposition \ref{proposition:topological2} to $f^{-1}\colon \Omega^* \to \Omega$ to deduce that $E\coloneqq\operatorname{Clu}(f^{-1};p)$ is a non-degenerate continuum. By the continuity of $f$ on $\br\Omega$ it follows that $\operatorname{Clu}(f;E)=p$, contradicting Lemma \ref{lemma:circles to circles}.
\end{proof}

\section{Quasiconformal extension}
\label{sec7}

Let $\Omega$ be a circle domain with $\infty \in \Omega$, and suppose that $\Omega$ satisfies the quasihyperbolic condition. The goal of this section is to prove that there exists a uniform constant $K$ such that every conformal map of $\Omega$ onto another circle domain extends to a $K$-quasiconformal homeomorphism of $\RiemannSphere$.

\subsection{Homeomorphic extension by reflection}
\label{subsechom}

Let $f\colon \Omega \to \Omega^*$ be a conformal map of $\Omega$ onto another circle domain $\Omega^*$, and assume without loss of generality that $f(\infty)=\infty \in \Omega^*$. By Theorem \ref{thm:continuous}, the map $f$ extends to a homeomorphism of $\overline{\Omega}$ onto $\overline{\Omega^*}$, which we still denote by the letter $f$. Our goal now is to use repeated Schwarz reflections to extend $f$ to a homeomorphism of $\RiemannSphere$ that conjugates the Schottky groups of $\Omega$ and $\Omega^*$. First, we need some notation and definitions. The interested reader may also want to consult \cite[Section 3]{BKM}, which contains similar material.

Let $\{\gamma_j\}$ be the collection of disjoint circles in $\partial \Omega$, and for each $j$ denote by $R_j \colon  \RiemannSphere \to \RiemannSphere$ the reflection across the circle $\gamma_j$, i.e.,
$$R_j(z)\coloneqq a_j + \frac{r_j^2}{\overline{z}-\overline{a_j}},$$
where $a_j$ is the center and $r_j$ is the radius of the circle $\gamma_j$. Also, denote by $B_j$ the open disk bounded by the circle $\gamma_j$; we remark that the letter $B$ was used in previous sections to denote a (closed) component of $\widehat{\C}\setminus \Omega$, but here $B_j$ is open. Note that the disks $B_j$ have pairwise disjoint closures and that $\RiemannSphere \setminus \overline{\Omega} = \bigcup_j B_j$.

\begin{definition}
The \textit{Schottky group} $\Gamma(\Omega)$ is the free discrete group of M\"{o}bius and anti-M\"{o}bius transformations generated by the family of reflections $\{R_j\}$.
\end{definition}

Thus $\Gamma(\Omega)$ consists of the identity map and all transformations of the form $T=R_{i_1}\circ \cdots \circ R_{i_k}$, $k\geq 1$, where $i_j \neq i_{j+1}$ for $j=1,\dots,k-1$. If $T\in \Gamma(\Omega)$ is represented in this form, then we say that $T$ is written in \textit{reduced form} and the sequence of indices $i_1,\dots,i_{k}$ is also called reduced whenever consecutive indices are distinct.

With this reduced form, we define the \textit{length} of $T$ by $l(T)\coloneqq k$. The length of the identity map is defined to be zero.

A simple observation that we will use repeatedly is that if $T=R_{i_1}\circ \cdots \circ R_{i_k}$ is in reduced form, then $T(\Omega) \subset B_{i_1}$. In particular $T(\Omega) \cap \overline{\Omega} = \emptyset$, and the map $T$ cannot be equal to the identity. This implies that the representation of $T$ in reduced form is unique and thus $l(T)$ is well-defined. Indeed, suppose that
$$R_{i_1} \circ \cdots \circ R_{i_k} = R_{j_1} \circ \cdots \circ R_{j_l}$$
are distinct representations of $T$ in reduced form. Using the fact that each reflection $R_j$ is its own inverse, we can simplify to obtain $R_{m_1} \circ \cdots \circ R_{m_n} = \operatorname{id}$ for some $n \geq 1$, contradicting the above. See also \cite[Proposition 3.1]{BKM}.

We remark that for each $k\geq 0$ there are at most countably many $T\in \Gamma(\Omega)$ with $l(T)=k$.

Now, for each $k\geq 0$, consider the union of reflected domains
$$\Omega_k\coloneqq \bigcup_{l(T) \leq k} T(\Omega),$$
where the union is taken over all elements $T$ of the Schottky group $\Gamma(\Omega)$ with length $0 \leq l(T) \leq k$. We will need the following properties of the open sets $\Omega_k$.

\begin{lemma}\label{lemma:equations}

For each $k\geq 0$, we have

\begin{equation}
\label{equation:closure}
\overline{\Omega_k} = \bigcup_{l(T) \leq k} T(\overline{\Omega}),
\end{equation}
\begin{align}\label{equation:boundaries}
\partial \Omega_k = \bigcup_{l(T) \leq k} T(\partial \Omega),
\end{align}
and
\begin{align}\label{equation:complement}
\begin{aligned}
\RiemannSphere \setminus \overline{\Omega_k} &= \{ R_{i_1} \circ \dots \circ R_{i_k}(B_{i_{k+1}}): i_j \neq i_{j+1}, \, j=1,\dots,k \} \\
&=\bigsqcup_{i_j\neq i_{j+1}} R_{i_1} \circ \dots \circ R_{i_k}(B_{i_{k+1}}),
\end{aligned}
\end{align}
which is understood as $\bigcup_j B_j$ when $k=0$. Moreover, for each reduced sequence $i_1,\dots,i_{k+2}$ we have
\begin{align}\label{equation:containment}
R_{i_1} \circ \dots \circ R_{i_{k+1}}(B_{i_{k+2}}) \subset \subset R_{i_1} \circ \dots \circ R_{i_k}(B_{i_{k+1}}),
\end{align}
which is understood as $R_{i_1}(B_{i_2})\subset\subset  B_{i_1}$ when $k=0$.
\end{lemma}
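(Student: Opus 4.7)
The four parts are interwoven, so I would prove them in the order (iv), (iii), then (i)--(ii). The key input is (iv), which follows by iterating its base case $R_i(B_j)\subset\subset B_i$ for $i\ne j$. Since $\overline{B_i}$ and $\overline{B_j}$ are disjoint closed disks and $R_i$ is the involution of $\RiemannSphere$ fixing $\gamma_i$ pointwise and exchanging $B_i$ with $\RiemannSphere\setminus\overline{B_i}$, the image $R_i(\overline{B_j})$ is a compact subset of the open disk $B_i$, giving the base case. The general statement then follows by applying the homeomorphism $R_{i_1}\circ\dots\circ R_{i_k}$ to both sides of the inclusion $R_{i_{k+1}}(\overline{B_{i_{k+2}}})\subset B_{i_{k+1}}$.

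Next I would prove (iii). Set $F_k\coloneqq\bigcup_{l(T)\le k}T(\overline{\Omega})$ and verify the disjoint decomposition $\RiemannSphere\setminus F_k=\bigsqcup R_{i_1}\circ\dots\circ R_{i_k}(B_{i_{k+1}})$ by induction on $k$. Using the nested containments from (iv), the sets on the right are pairwise disjoint over distinct reduced sequences, and are disjoint from every $T(\overline{\Omega})$ with $l(T)\le k$: if $T$ starts with $R_j$ for $j\ne i_1$, its image lies in $\overline{B_j}$, disjoint from $\overline{B_{i_1}}\supset R_{i_1}\circ\dots\circ R_{i_k}(B_{i_{k+1}})$; if $T$ starts with $R_{i_1}$, apply $R_{i_1}$ to both sides and invoke the induction hypothesis on the shorter reduced sequences. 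Conversely, for $z\notin F_k$, the condition $z\notin\overline{\Omega}$ forces $z\in B_{i_1}$ for some $i_1$, and then $z\notin R_{i_1}(\overline{\Omega})$ forces $R_{i_1}(z)\in B_{i_2}$ with $i_2\ne i_1$ (the latter since $z\in B_{i_1}$ implies $R_{i_1}(z)\in\RiemannSphere\setminus\overline{B_{i_1}}$). Iterating this extraction produces a reduced sequence $i_1,\dots,i_{k+1}$ with $z\in R_{i_1}\circ\dots\circ R_{i_k}(B_{i_{k+1}})$.

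Finally, (iii) yields (i) and (ii) at once: the right-hand side of (iii) is a union of open sets, so $F_k$ is closed; combined with $\Omega_k\subset F_k$ and $T(\overline{\Omega})=\overline{T(\Omega)}\subset\overline{\Omega_k}$, this proves $\overline{\Omega_k}=F_k$, which is (i). For (ii), write $\partial\Omega_k=\overline{\Omega_k}\setminus\Omega_k=F_k\setminus\Omega_k$ and observe that each $T(\partial\Omega)$ is disjoint from $\Omega_k$, since a point on a reflected boundary circle $T(\gamma_j)$ separates $T(\Omega)$ from $T\circ R_j(\Omega)$ and therefore cannot lie in any of the pairwise-disjoint open copies $T'(\Omega)$ with $l(T')\le k$. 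The main difficulty is ensuring that the countable union $F_k$ is actually closed; (iv) supplies exactly the nested compact containment needed to rule out accumulation of the $T(\overline{\Omega})$ at points outside $F_k$, and makes the enumeration by reduced sequences in (iii) concrete.
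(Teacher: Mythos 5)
Your proposal is correct, and it follows a genuinely different logical path than the paper. The paper proves \eqref{equation:closure} first, via a fairly delicate sequential-compactness argument: it shows by induction on $k$ that the set $F_k := \bigcup_{l(T)\le k}T(\overline\Omega)$ is closed, splitting into cases based on the first reflection in $T_n$ and invoking the fact that the disks $B_j$ have diameters tending to zero (Lemma~\ref{lemma:countably many components}). It then deduces \eqref{equation:boundaries}, proves \eqref{equation:complement} (which in the paper's version \emph{uses} \eqref{equation:closure} for the reverse inclusion), and finally proves \eqref{equation:containment}. You instead prove \eqref{equation:containment} first, then establish the identity $\RiemannSphere\setminus F_k = \bigsqcup R_{i_1}\circ\dots\circ R_{i_k}(B_{i_{k+1}})$ directly for the set $F_k$ (rather than for $\overline{\Omega_k}$) by purely combinatorial induction on reduced words, so that closedness of $F_k$ is immediate because its complement is a union of open disks. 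This buys you \eqref{equation:closure} for free and sidesteps the paper's convergence argument and its appeal to Lemma~\ref{lemma:countably many components} entirely; in that sense your route is cleaner. One small gap: in your proof of \eqref{equation:boundaries}, the justification that $T(\partial\Omega)$ misses $\Omega_k$ addresses only the reflected circles $T(\gamma_j)$ and relies on a separation heuristic. The uniform argument (as in the paper) is simpler and covers the point components of $\partial\Omega$ as well: if $T(\partial\Omega)\cap S(\Omega)\ne\emptyset$, then $\partial\Omega\cap(T^{-1}S)(\Omega)\ne\emptyset$, but $(T^{-1}S)(\Omega)$ equals $\Omega$ when $S=T$ and otherwise lies inside some open disk $B_m$, and both are disjoint from $\partial\Omega$.
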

Here $A\subset \subset B$ means that the set $\br A$ is compact and is contained in $B$.

\begin{proof}
We first prove by induction on $k$ that the set $F_k\coloneqq \bigcup_{l(T) \leq k} T(\overline{\Omega})$ is closed. Note that $F_0 = \overline{\Omega}$, which is a closed set. Suppose that $F_k$ is closed for some $k \geq 0$, and let $w_n$ be a sequence in $F_{k+1}$ with $w_n \to w$. For each $n$, write $w_n = T_n(z_n)$, where $z_n \in \overline{\Omega}$ and $T_n \in \Gamma(\Omega)$ with $l(T_n) \leq k+1$. If $T_n$ is equal to the identity map for infinitely many $n$, then $w_n = z_n \in \overline{\Omega}$ for these values of $n$, which implies that $w \in \overline{\Omega} \subset F_{k+1}$. We can therefore assume that $T_n \neq \operatorname{id}$ for all $n$. Now, write $T_n = Q_n \circ S_n$, where $Q_n$ is the reflection across one of the boundary circles $\gamma_j$, say $C_n$, and $l(S_n) \leq k$. There are two possibilities to consider.

First, suppose that infinitely many of the reflections $Q_n$ are equal, say $Q_{n_j}=R$ for all $j$. This gives $w_{n_j} = T_{n_j}(z_{n_j}) =  R \circ S_{n_j}(z_{n_j})$, so that $R(w_{n_j}) = S_{n_j}(z_{n_j})$ for all $j$, since $R^{-1}=R$. But $S_{n_j}(z_{n_j}) \in F_k$ for all $j$ and $R(w_{n_j}) \to R(w)$ as $j \to \infty$, so that $R(w) \in F_k$, since $F_k$ is assumed to be closed. This implies that $w \in F_{k+1}$, as required.

In the second case, passing to a subsequence if necessary, we can assume that all the reflections $Q_n$ are distinct. The closed disks $D_n$ in $\RiemannSphere \setminus \Omega$ bounded by the circles $C_n$ are then pairwise disjoint and hence must have diameters converging to zero; see Lemma \ref{lemma:countably many components}. Now, note that $w_n \in D_n$ for all $n$, so that necessarily $w \in \partial \Omega$, since $w_n \to w$ and the sequence of disks $D_n$ can only accumulate at the boundary of $\Omega$. In particular, the limit point $w$ belongs to $F_{k+1}\supset \partial \Omega$.

In both cases, we get that $w \in F_{k+1}$, so that $F_{k+1}$ is closed, completing the proof by induction. Now, note that since $F_k$ is a closed set containing $\Omega_k$, we must have $\overline{\Omega_k} \subset F_k$. The other inclusion is trivial, and (\ref{equation:closure}) follows.
\\

To prove (\ref{equation:boundaries}), let $w \in \partial \Omega_k$, so that $w \in \overline{\Omega_k}$ but $w \notin \Omega_k$. By (\ref{equation:closure}), there exists $T \in \Gamma(\Omega)$ with $l(T) \leq k$ such that $w \in T(\overline{\Omega})$. Note that $w$ cannot belong to $T(\Omega)$, since $w\notin \Omega_k$. It follows that $w \in T(\partial \Omega)$. This shows that
$$\partial \Omega_k \subset \bigcup_{l(T) \leq k} T(\partial \Omega).$$
For the other inclusion, suppose that $T \in \Gamma(\Omega)$ with $l(T) \leq k$. Then $T(\partial \Omega) \subset T(\overline{\Omega}) \subset \overline{\Omega_k}$, by (\ref{equation:closure}). On the other hand, it is easy to see that for every $S \in \Gamma(\Omega)$ the set $T(\partial \Omega)$ cannot intersect $S(\Omega)$, including when $S=T$. {Indeed, suppose that $T(\partial \Omega)\cap S(\Omega)\neq \emptyset$ for some $S\in \Gamma(\Omega)$. This implies that $\partial \Omega \cap T^{-1}( S(\Omega))\neq \emptyset$. Note that $T^{-1}( S(\Omega))$ is either equal to $\Omega$ if $S=T$, or it lies in an open disk of $\widehat{\C}\setminus\br{\Omega}$. In both cases we obtain a contradiction.} It follows that $T(\partial \Omega) \cap \Omega_k=\emptyset$. Thus $T(\partial \Omega) \subset \partial \Omega_k$ for each $T \in \Gamma(\Omega)$ with $l(T) \leq k$, and taking unions gives
$$\bigcup_{l(T) \leq k} T(\partial \Omega) \subset \partial \Omega_k.$$
This completes the proof of (\ref{equation:boundaries}).
\\

Finally, to prove (\ref{equation:complement}), recall that $\RiemannSphere \setminus \overline{\Omega} = \bigcup_j B_j$, so that (\ref{equation:complement}) holds for $k=0$. Now, suppose that $k \geq 1$, and let $w \notin \overline{\Omega_k}$. Then in particular, we have that $w \notin \overline{\Omega}$, so that $w \in B_{i_1}$ for some $i_1$. Moreover, we have that $R_{i_1}(w) \notin \overline{\Omega}$, otherwise $w$ would lie in $\br {\Omega_1}$. Hence, there exists $i_2 \neq i_1$ such that $R_{i_1}(w) \in B_{i_2}$, i.e., $w \in R_{i_1}(B_{i_2})$. Repeating this process, we get a reduced sequence of indices $i_1, \dots, i_{k+1}$ such that $ w \in R_{i_1} \circ \dots \circ R_{i_k}(B_{i_{k+1}})$. This shows that

$$ \RiemannSphere \setminus \overline{\Omega_k} \subset \{ R_{i_1} \circ \dots \circ R_{i_k}(B_{i_{k+1}}): i_j \neq i_{j+1} ,\, j=1,\dots,k \}.$$
To prove the reverse inclusion, let $w = R_{i_1} \circ \dots \circ R_{i_k} (z)$ for $z \in B_{i_{k+1}}$, where $i_j \neq i_{j+1}$ for $j=1,\dots,k$. Suppose for a contradiction that $w \in \overline{\Omega_k}$. Then by (\ref{equation:closure}), we have that $w= S(z_0)$ for some $S \in \Gamma(\Omega)$ with $l(S) \leq k$ and some $z_0 \in \overline{\Omega}$. Writing $S=R_{j_1} \circ \dots \circ R_{j_l}$, $l \leq k$, in reduced form, we get
$$R_{i_1} \circ \dots \circ R_{i_k} (z) = R_{j_1} \circ \dots \circ R_{j_l} (z_0).$$
But the left-hand side belongs to $B_{i_1}$ while the right-hand side belongs to $\overline{B_{j_1}}$, so that $i_1=j_1$, since the disks $B_j$ have pairwise disjoint closures. Simplifying and repeating this argument, we get
$$R_{i_{l+1}} \circ \dots \circ R_{i_k}(z)=z_0,$$
if $l<k$, or $z=z_0$, if $l=k$. This is clearly impossible since $z \in B_{i_{k+1}}$ and $z_0 \in \overline{\Omega}$. It follows that
$$ \{ R_{i_1}\circ \dots \circ R_{i_k}(B_{i_{k+1}}): i_j \neq i_{j+1},\, j=1,\dots,k \} \subset \RiemannSphere \setminus \overline{\Omega_k},$$
as required. A similar argument shows that the disks $R_{i_1}\circ \dots \circ R_{i_k}(B_{i_{k+1}})$ are disjoint for distinct reduced sequences $i_1,\dots,i_{k+1}$. This completes the proof of (\ref{equation:complement}).

For \eqref{equation:containment} we note that $R_{i_{k+1}}(B_{i_{k+2}}) \subset\subset B_{i_{k+1}}$ since the circles $\gamma_{i_{k+1}},\gamma_{i_{k+2}} \subset \partial \Omega$ are disjoint. Applying the reflections $R_{i_k},\dots,R_{i_1}$ to both sides of the inclusions gives the result.
\end{proof}

By (\ref{equation:complement}), the complement of $\overline{\Omega_k}$ is the union of disjoint open disks, each obtained by $k$ reflections of a disk in $\RiemannSphere \setminus \overline{\Omega}$.

\begin{proposition}\label{disks:converge to 0}
The area of each disk in the complement of $\overline{\Omega_k}$ tends to zero as $k \to \infty$.
\end{proposition}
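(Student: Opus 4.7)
Fix an infinite reduced sequence $(i_1,i_2,\ldots)$ with $i_j\neq i_{j+1}$, and let $D^{(k)}\coloneqq R_{i_1}\circ\cdots\circ R_{i_k}(B_{i_{k+1}})$ be the corresponding nested disks. By \eqref{equation:containment}, $\overline{D^{(k+1)}}\subset D^{(k)}$, so $D_\infty\coloneqq\bigcap_{k}\overline{D^{(k)}}$ is itself a closed round disk in $\widehat{\C}$, possibly degenerate to a single point. By monotone convergence, $\area(D^{(k)})\to\area(D_\infty)$, so the problem reduces to showing that $D_\infty$ is a single point.

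Setting $T^{(k+1)}\coloneqq R_{i_1}\circ\cdots\circ R_{i_{k+1}}$, two key facts enter. First, by the uniqueness of reduced forms (noted before Lemma~\ref{lemma:equations}), the open sets $T(\Omega)$, $T\in\Gamma(\Omega)$, are pairwise disjoint, so $\sum_{T\in\Gamma(\Omega)}\area(T(\Omega))\leq\area(\widehat{\C})<\infty$ in the spherical metric. Combining this with the decomposition $D^{(k)}=T^{(k+1)}(\Omega)\cup\bigcup_{j\neq i_{k+1}}\overline{D^{(k+1)}_{j}}$ from \eqref{equation:complement} and the inclusion $T^{(k+1)}(\Omega)\subset D^{(k)}\setminus\overline{D^{(k+1)}_{i_{k+2}}}$ yields
\[
\area(T^{(k+1)}(\Omega))\leq \area(D^{(k)})-\area(D^{(k+1)})\longrightarrow 0.
\]
Second, since $R_{i_{k+1}}$ fixes $\partial B_{i_{k+1}}\subset\partial\Omega$ pointwise, we have $\partial D^{(k)}=T^{(k+1)}(\partial B_{i_{k+1}})\subset\overline{T^{(k+1)}(\Omega)}$, and together with $T^{(k+1)}(\Omega)\subset D^{(k)}$ this forces the geometric identity $\diam(T^{(k+1)}(\Omega))=\diam(D^{(k)})$. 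Assuming for contradiction that $D_\infty$ has positive radius $r_0>0$, the reflected copies $T^{(k+1)}(\Omega)$ would then be connected subsets of $D^{(k)}$ with diameter bounded below by $2r_0$, yet with vanishing area.

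The main obstacle is converting this area-versus-diameter tension into an outright contradiction. My plan would be to apply Koebe distortion to the conformal (or anti-conformal) map $T^{(k+1)}\colon\widehat{\C}\setminus\overline{B_{i_{k+1}}}\to D^{(k)}$ in conjunction with the quasihyperbolic condition on $\Omega$, via the shadow-sum inequality of Lemma~\ref{quasi:shadow sum} and the uniform fatness of the complementary disks discussed in Subsection~\ref{sec:fatness}, to derive a lower bound on $\area(T^{(k+1)}(\Omega))$ in terms of $\diam(D^{(k)})^{2}$. Such an estimate would force $\diam(D^{(k)})\to 0$ and therefore $r_0=0$, the desired contradiction. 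The delicate step is making the shadow-sum estimate quantitatively effective in the conformally distorted picture, uniformly in $k$; this is where I expect the bulk of the work to lie.
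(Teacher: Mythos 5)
Your setup is clean and several of the intermediate observations are correct: reducing to a fixed infinite reduced sequence $(i_1,i_2,\dots)$ (this does require a K\"onig-type extraction that you gloss over, since the statement is about \emph{all} disks in $\RiemannSphere\setminus\overline{\Omega_k}$, but the reduction is valid because, for any fixed $\varepsilon>0$, the disks of area $>\varepsilon$ at each level form a finitely branching tree); the identification $D_\infty=\bigcap_k\overline{D^{(k)}}$ as a round disk; the fact that $\area(T^{(k+1)}(\Omega))\to 0$; and the identity $\diam(\overline{T^{(k+1)}(\Omega)})=\diam(D^{(k)})$.

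The genuine gap is exactly where you flag it, and the route you sketch to close it cannot work. You want a lower bound of the form $\area(T^{(k+1)}(\Omega))\gtrsim\diam(D^{(k)})^{2}$, but no such inequality holds, and in particular it fails already at $k=0$: take $\Omega$ to be a thin circular annulus $\{1<|z|<1+\epsilon\}$ (viewed as a two-circle circle domain). Then $R_1(\Omega)=\{(1+\epsilon)^{-1}<|z|<1\}$ has area of order $\epsilon$ while its diameter is $2$, so the ratio $\area/\diam^{2}$ is arbitrarily small. Reflected copies of $\Omega$ are not ``fat'' in any sense controllable by the quasihyperbolic condition, and neither Koebe distortion (which is pointwise, not global) nor the shadow-sum estimate of Lemma \ref{quasi:shadow sum} (which controls diameters of shadows, not areas of reflected domains from below) gives the bound you would need. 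The tension between vanishing area and bounded diameter is a true fact about $T^{(k+1)}(\Omega)$, but it is not a contradiction by itself: connected open sets can have small area and large diameter.

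The paper's proof avoids this entirely and, notably, does not use the quasihyperbolic condition at all; the statement is a purely Schottky-group fact (compare \cite[Lemma 3.3]{BKM}). The mechanism is that each reflection $R_j$ restricted to $\bigcup_{i\neq j}B_i$ is strictly area-contracting, with Jacobian $r_j^4/|z-a_j|^4<1$. If $\partial\Omega$ has finitely many circles, this contraction factor is bounded away from $1$ uniformly, so areas of the complementary disks of $\overline{\Omega_k}$ decay geometrically. In the general case one argues by contradiction: a sequence of complementary disks $D_k\subset\RiemannSphere\setminus\overline{\Omega_k}$ with $\area(D_k)>\varepsilon$ must, because reflection never increases area, arise from reflecting along circles each of whose bounded complementary disk also has area $>\varepsilon$; there are only finitely many such circles, so one is back in the finite case and gets a contradiction. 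This is both more elementary and more robust than what you propose, and it explains why no analytic input (Koebe, shadows, fatness) is needed for this particular proposition.
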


See also \cite[Lemma 3.3]{BKM}.

\begin{proof}
We first prove the result in the case where $\partial \Omega$ contains only finitely many circles.

A simple calculation shows that the absolute value of the Jacobian of $R_j$ is $r_j^4/|z-a_j|^4$, which is less than $r_j^4/d^4<1$ on $\bigcup_{k \neq j} B_k$, where $d>0$ is the infimum of the distances between $a_j$ and each $\gamma_k$, $k \neq j$. Now, since there are only finitely many circles $\gamma_j$, the ratio $r_j^4/d^4$ can be bounded uniformly away from $1$, independently of $j$, so that the area of each disk decreases by a definite amount after each reflection. This proves the result in this case. Our computation also shows that the reflection from the exterior to the interior of a circle is area-decreasing.

For the general case, suppose for a contradiction that there exists some $\varepsilon>0$ and disks $D_k \subset \RiemannSphere \setminus \overline{\Omega_k}$, $k \in \mathbb{N}$, such that for each $k$, $D_k$ has area larger than $\varepsilon$. By \eqref{equation:complement} each $D_k$ is necessarily obtained by reflections of a disk $B_{j_k}$ in $\RiemannSphere \setminus \br\Omega$ with area larger than $\varepsilon$ along finitely many circles, each of them bounding a disk with area also larger than $\varepsilon$. This is because these reflections are area-decreasing, by the first part of the proof. However, there are only finitely many disks in $\RiemannSphere \setminus \br \Omega$ with area bigger than $\varepsilon$, so $D_k$ is obtained by reflecting along the same finite family of circles for all $k\in \N$. The first part of the proof then gives a contradiction.
\end{proof}

We define $\Omega_\infty= \bigcup_{k=0}^\infty \br{\Omega_k}$. Then we have the following immediate consequence of Lemma \ref{lemma:equations} and Proposition \ref{disks:converge to 0}:
\begin{corollary}\label{corollary:OmegaInfinity}
For each $z\in \widehat{\C}\setminus \Omega_\infty$ there exist a unique sequence of indices $\{i_j\}_{j\in \N}$ with $i_j\neq i_{j+1}$ for $j\in \N$, and disks $D_0=B_{i_1}$, $D_k=R_{i_1}\circ \dots \circ R_{i_k}(B_{i_{k+1}})$, such that $D_{k+1}\subset\subset  D_k$ for $k\geq 0$, and $\{z\}= \bigcap_{k=0}^\infty D_k$.

Conversely, if $D_k$ is a sequence of disks as above, then $\bigcap_{k=0}^\infty D_k$ is a single point contained in $\widehat{\C}\setminus \Omega_\infty$.
\end{corollary}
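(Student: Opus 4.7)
The plan is to deduce both directions directly from equation \eqref{equation:complement} of Lemma \ref{lemma:equations}, the strict nesting \eqref{equation:containment}, and Proposition \ref{disks:converge to 0}. The only real content to organize is the uniqueness of the sequence $\{i_j\}$ and the observation that the areas-to-zero statement of Proposition \ref{disks:converge to 0} forces the nested intersection to collapse to a point.

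For the first direction, fix $z \in \widehat{\C} \setminus \Omega_\infty$, so $z \notin \overline{\Omega_k}$ for every $k \geq 0$. Applying \eqref{equation:complement} for each $k$ produces a reduced sequence $i_1^{(k)}, \dots, i_{k+1}^{(k)}$ with $z \in R_{i_1^{(k)}} \circ \cdots \circ R_{i_k^{(k)}}(B_{i_{k+1}^{(k)}})$. The disjointness statement in \eqref{equation:complement} shows that this reduced sequence is unique for each $k$, and the nesting \eqref{equation:containment} combined with uniqueness forces $i_j^{(k+1)} = i_j^{(k)}$ for $1 \leq j \leq k+1$. Thus I would define $i_j \coloneqq i_j^{(j)}$ and note that the resulting disks $D_k = R_{i_1} \circ \cdots \circ R_{i_k}(B_{i_{k+1}})$ satisfy $z \in D_k$ for all $k$ and $D_{k+1} \subset\subset D_k$ by \eqref{equation:containment}. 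Then $\{z\} \subset \bigcap_k D_k$, and Proposition \ref{disks:converge to 0} (areas tending to zero, hence diameters) yields the reverse inclusion.

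For the converse, given any such nested sequence of disks $D_k$, the closures $\overline{D_k}$ form a decreasing sequence of non-empty compact sets (in $\widehat{\C}$) with $\overline{D_{k+1}} \subset D_k$, so $\bigcap_k D_k = \bigcap_k \overline{D_k}$ is non-empty by compactness. Since $\operatorname{area}(D_k) \to 0$ by Proposition \ref{disks:converge to 0}, the intersection contains at most one point. Finally, each $D_k$ lies in $\widehat{\C} \setminus \overline{\Omega_k}$ by \eqref{equation:complement}, so the unique intersection point lies in $\bigcap_{k \geq 0} (\widehat{\C} \setminus \overline{\Omega_k}) = \widehat{\C} \setminus \bigcup_{k \geq 0} \overline{\Omega_k} = \widehat{\C} \setminus \Omega_\infty$.

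There is no serious obstacle here; the statement is essentially a repackaging of previously established facts. The only point requiring care is the compatibility of the indices $i_j^{(k)}$ as $k$ varies, which must be argued from the strict containment \eqref{equation:containment} together with the disjointness in \eqref{equation:complement}: two distinct reduced sequences of length $k$ produce disjoint disks, so a common point $z$ forces the unique sequence, and passing from level $k$ to level $k+1$ can only append one new index without altering the previous ones.
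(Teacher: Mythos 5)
Your argument is correct and is essentially the paper's intended one; the paper declares the corollary an ``immediate consequence'' of Lemma~\ref{lemma:equations} and Proposition~\ref{disks:converge to 0}, and you fill in precisely those dots (uniqueness at each level via the disjointness in \eqref{equation:complement}, compatibility across levels via \eqref{equation:containment}, and collapse to a point via Proposition~\ref{disks:converge to 0}, using that for round disks in a bounded region area tending to zero forces diameter tending to zero).
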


Now, if $f\colon \Omega \to \Omega^*$ is a conformal map of $\Omega$ onto another circle domain $\Omega^*$ that extends to a homeomorphism of $\overline{\Omega}$ onto $\overline{\Omega^*}$, we denote by $\{\gamma_j^*\}$ the collection of boundary circles in $\partial \Omega^*$, enumerated in such a way that $f(\gamma_j)=\gamma_j^*$ for all $j$. We also denote by $B_j^*$ the open disk in $\RiemannSphere \setminus \overline{\Omega^*}$ bounded by the circle $\gamma_j^*$. Also, let $R_j^*$ be the reflection across the circle $\gamma_j^*$ and $\Gamma(\Omega^*)$ be the Schottky group of $\Omega^*$. We denote by $T^*$ the element of $\Gamma(\Omega^*)$ corresponding to $T \in \Gamma(\Omega)$ and observe that $(T^*)^{-1}=(T^{-1})^*$. Finally, we define the sets $\Omega_k^*$ by
$$\Omega_k^*\coloneqq \bigcup_{l(T^*) \leq k} T^*(\Omega^*).$$

The proof of the following extension result was sketched in \cite[Lemma 3.1]{SCH2}. We give a complete proof for the convenience of the reader.

\begin{lemma}
\label{HomeoExtension}
The map $f \colon  \overline{\Omega} \to \overline{\Omega^*}$ extends uniquely to a  homeomorphism $\tilde{f} \colon  \RiemannSphere \to \RiemannSphere$ that conjugates the Schottky groups of $\Omega$ and $\Omega^*$, i.e.,

$$T^* = \tilde{f} \circ T \circ  \tilde{f}^{-1} \qquad (T \in \Gamma(\Omega)).$$
\end{lemma}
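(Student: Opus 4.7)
The plan is to construct $\tilde f$ in two stages. For each $T = R_{i_1}\circ\cdots\circ R_{i_k}\in \Gamma(\Omega)$ written in reduced form, I set $T^*\coloneqq R^*_{i_1}\circ\cdots\circ R^*_{i_k}\in \Gamma(\Omega^*)$; since both Schottky groups are free on their respective reflection generators, $T\mapsto T^*$ is a well-defined group isomorphism. On the open set $\Omega_\infty = \bigcup_{k\geq 0}\overline{\Omega_k}$, I first define
$$
\tilde f(z)\coloneqq T^*\bigl(f(T^{-1}(z))\bigr) \qquad \text{for } z\in T(\overline{\Omega}).
$$
If $z$ lies in two distinct cells $T(\overline{\Omega})$ and $T'(\overline{\Omega})$, then the structure of the Schottky tessellation forces $T'=T\circ R_j$ for some $j$ and $z\in T(\gamma_j)$; on this common circle the two formulas agree because $R_j$ fixes $\gamma_j$ pointwise and $R_j^*$ fixes $f(\gamma_j)=\gamma^*_j$ pointwise. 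Hence $\tilde f$ is a well-defined continuous map on $\Omega_\infty$.

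For the second stage, given $z\in \widehat{\C}\setminus \Omega_\infty$, Corollary \ref{corollary:OmegaInfinity} supplies a unique reduced sequence $\{i_j\}_{j\in \N}$ and nested disks $D_k = R_{i_1}\circ\cdots\circ R_{i_k}(B_{i_{k+1}})$ with $\{z\} = \bigcap_k \overline{D_k}$. Setting $D_k^*\coloneqq R^*_{i_1}\circ\cdots\circ R^*_{i_k}(B^*_{i_{k+1}})$ and invoking Proposition \ref{disks:converge to 0} for $\Omega^*$ gives $\diam(D_k^*)\to 0$, so $\bigcap_k \overline{D_k^*}$ is a single point, which I declare to be $\tilde f(z)$. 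The same procedure with $f^{-1}$ in place of $f$ produces a candidate inverse $\tilde g \colon \widehat{\C}\to\widehat{\C}$.

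It remains to check that $\tilde f$ is a homeomorphism satisfying the conjugation relation. Continuity on the open set $\Omega_\infty$ is immediate; for $z\in \widehat{\C}\setminus \Omega_\infty$, the key observation is that $\tilde f(\overline{D_k})\subset \overline{D_k^*}$. Indeed, if $w\in D_k\cap T(\overline\Omega)$ for some $T\in \Gamma(\Omega)$, then Lemma \ref{lemma:equations} together with the disjointness of the cells of $\widehat{\C}\setminus\overline{\Omega_k}$ forces $T$ to begin with $R_{i_1}\circ\cdots\circ R_{i_{k+1}}$, and the same nesting argument that places $T(\overline\Omega)$ inside $\overline{D_k}$ then places $T^*(\overline{\Omega^*})$ inside $\overline{D_k^*}$; for $w\in D_k\setminus \Omega_\infty$, its associated nested chain shares the first $k+1$ indices with that of $z$, so $\tilde f(w)\in \overline{D_k^*}$ by construction. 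Since $\{D_k\}$ and $\{D_k^*\}$ form neighborhood bases at $z$ and $\tilde f(z)$ with $\diam(D_k^*)\to 0$, continuity at $z$ follows. Combined with the symmetric continuity of $\tilde g$ and the identity $\tilde g\circ \tilde f = \id$ on the dense subset $\Omega_\infty$, this shows $\tilde f$ is a homeomorphism of $\widehat{\C}$.

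For the conjugation, given $S\in \Gamma(\Omega)$ and $w\in T(\overline\Omega)$,
$$
\tilde f(S(w)) = (S\circ T)^*\bigl(f((S\circ T)^{-1}(S(w)))\bigr) = S^*\bigl(T^*(f(T^{-1}(w)))\bigr) = S^*(\tilde f(w)),
$$
using that $T\mapsto T^*$ is a group homomorphism, so $\tilde f\circ S = S^*\circ \tilde f$ on $\Omega_\infty$ and hence on $\widehat{\C}$ by continuity. Uniqueness follows because the relation $\tilde f \circ T = T^*\circ \tilde f$ together with $\tilde f|_{\overline{\Omega}} = f$ forces $\tilde f|_{T(\overline{\Omega})} = T^*\circ f\circ T^{-1}$ on every cell, so any two extensions coincide on the dense set $\Omega_\infty$ and therefore everywhere. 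The main technical obstacle is continuity at the residual set, which ultimately rests on the uniform shrinking of complementary disks furnished by Proposition \ref{disks:converge to 0}.
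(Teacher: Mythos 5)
Your construction follows the paper's scheme step for step: define $\tilde f$ cell-by-cell on $\Omega_\infty$ by $\tilde f = T^*\circ f\circ T^{-1}$ on $T(\overline\Omega)$, extend to the residual set via nested disks $D_k\mapsto D_k^*$, then check continuity, the conjugation identity, and uniqueness. The residual-set continuity argument (via $\tilde f(\overline{D_k})\subset\overline{D_k^*}$), the conjugation computation, and the uniqueness by density are all correct.

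However, there is a genuine gap at the sentence ``Hence $\tilde f$ is a well-defined continuous map on $\Omega_\infty$'' (repeated later as ``Continuity on the open set $\Omega_\infty$ is immediate''). First, $\Omega_\infty$ need not be open: a point boundary component $p\in\partial\Omega$, or a point of a circle $\gamma_j$, can be an accumulation point of the reflected disks, hence of residual points. More importantly, well-definedness on the pairwise overlaps $T(\gamma_j)$ plus continuity of $\tilde f$ on each closed cell $T(\overline\Omega)$ does not imply continuity on the union, because the cover $\{T(\overline\Omega)\}_{T\in\Gamma(\Omega)}$ is not locally finite. For instance, at a point $z\in\gamma_j$, infinitely many cells $R_j\circ R_{j'}(\overline\Omega)$ (with distinct $j'$) can accumulate; likewise at a point boundary component of a cell, deeper cells sitting inside a sequence of distinct disks $B_{j'}$ can accumulate. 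For a sequence $z_n\to z$ drawn from such a shrinking family of cells one must show $\tilde f(z_n)\to\tilde f(z)$, and this requires the same mechanism you use on the residual set — comparison of $\tilde f(z_n)$ with nearby boundary values of $f$ plus the decay $\diam(B_{j'_n}^*)\to 0$. This is precisely the casework the paper carries out (distinguishing whether the leading reflections of the $T_n$ repeat or not), and it is in fact the technical heart of the lemma: the authors remark that they are giving ``a complete proof'' because the published sketch elides it. Your proof would be complete if you supplied this continuity argument on $\Omega_\infty$ (or, alternatively, reorganized the proof to handle $\Omega_\infty$ and the residual set by a single uniform shrinking-disk estimate).
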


\begin{proof}
First define $\tilde{f}$ on $\overline{\Omega_1}$ by
\begin{displaymath}
\tilde{f} \coloneqq  \left\{ \begin{array}{ll}
f & \textrm{on $ \overline{\Omega}$}\\
R_j^* \circ f \circ R_j & \textrm{on $R_j(\overline{\Omega})$.}\\
\end{array} \right.
\end{displaymath}
Next, extend $\tilde{f}$ to $\overline{\Omega_2}$ by defining
$$
\tilde{f} \coloneqq (R_k^* \circ R_j^*) \circ f \circ (R_j \circ R_k) \, \, \textrm{on $R_k \circ R_j(\overline{\Omega})$.}
$$

Repeating this process defines an extension $\tilde{f}$ of $f$ to $\Omega_\infty =  \bigcup_{k=0}^\infty \overline{\Omega_k}$. By construction, the map $\tilde{f}$ is a bijection that conjugates the Schottky groups $\Gamma(\Omega)$,$\Gamma(\Omega^*)$ and maps $\Omega_\infty$ onto $\Omega_\infty^*\coloneqq   \bigcup_{k=0}^\infty \overline{\Omega_k^*}$. Note that $\tilde{f}$ is conformal on $\bigcup_{k=0}^\infty \Omega_k$.

Now, we claim that $\tilde{f}$ is continuous on $\Omega_\infty$. Indeed, let $w_n=T_n(z_n)$ be a sequence in $\Omega_\infty$ that converges to $w=T(z)$, where $T_n, T \in \Gamma(\Omega)$ and $z_n,z \in \overline{\Omega}$. We have to show that $\tilde{f}(w_n) \to \tilde{f}(w)$. This is equivalent to $\tilde{f}((T^{-1} \circ T_n)(z_n)) \to \tilde{f}(z)$, since $\tilde{f}$ conjugates $\Gamma(\Omega)$ and $\Gamma(\Omega^*)$. We can therefore assume that $T$ is the identity, so that $w=z \in \overline{\Omega}$. Now, we show that every subsequence of $\tilde{f}(w_n)$ has a subsequence that converges to $\tilde{f}(w)$, which implies that $\tilde{f}(w_n) \to \tilde{f}(w)$, as desired. Consider an arbitrary subsequence, that we still denote by $\tilde{f}(w_n)$ for simplicity. If $T_n$ is equal to the identity for infinitely many $n$, then $\tilde{f}(w_n) = \tilde{f}(z_n)$ for these values of $n$, and this gives a subsequence that converges to $\tilde{f}(w)$, since $z_n \in \overline{\Omega}$ and $\tilde{f}$ is continuous on $\overline{\Omega}$. We can therefore assume, passing to a subsequence if necessary, that $T_n \neq \operatorname{id}$ for all $n$. Write $T_n = R_{j_n} \circ S_n$, where $R_{j_n}$ is the reflection in the disk $B_{j_n}$, and $S_n \in \Gamma(\Omega)$. Now, suppose that infinitely many $R_{j_n}$ are distinct. Then, passing to a subsequence, we get a sequence of pairwise disjoint disks $B_{j_n}$ with $w_n \in \br{B_{j_n}}$ for all $n$. The diameters of these disks converge to zero, so for each $n$ we can choose $w_n' \in \partial B_{j_n} \subset \overline{\Omega}$ with $|w_n' - w_n| \to 0$. In particular, the sequence $w_n'$ also converges to $w$, so that $\tilde{f}(w_n') \to \tilde{f}(w)$, by the continuity of $\tilde{f}$ on $\overline{\Omega}$. But by construction, both $\tilde{f}(w_n')$ and $\tilde{f}(w_n)$ belong to the closed disk $\overline{B_{j_n}^*}$, from which it follows that $|\tilde{f}(w_n) - \tilde{f}(w_n')| \to 0$. This implies that $\tilde{f}(w_n) \to \tilde{f}(w)$, as required.

Passing to a subsequence, we may therefore assume that $R_{j_n}=R_i$ for all $n$, for some $i \in \mathbb{N}$. Then $w_n = R_i(S_n(z_n)) \in \overline{B_i}$ for all $n$, and since $w_n \to w$ and $w \in \overline{\Omega}$, the only possibility is that $w \in \partial B_i$. In particular $R_i(w)=w$, and we get that $S_n(z_n) = R_i(w_n)$ converges to $w$. If $S_n$ is the identity for infinitely many $n$, or if $S_n= R_{k_n} \circ Q_n$ for infinitely many $n$ where the $R_{k_n}$ are distinct, then arguing as above gives $\tilde{f}(S_n(z_n)) \to \tilde{f}(w)$, after passing to a subsequence. Hence $R_i^* \circ \tilde{f} (S_n(z_n)) \to R_i^* \circ \tilde{f}(w)$, i.e., $\tilde{f}(w_n) \to \tilde{f}(w)$, again using the fact that $\tilde{f}$ conjugates the Schottky groups. The only remaining case is $S_n = R_k \circ Q_n$ for all $n$, for some $k \in \mathbb{N}$. But then each $w_n = R_i \circ R_k \circ Q_n (z_n)$ belongs to the closed disk $R_i(\overline{B_k})$, a compact subset of $B_i$. This is clearly impossible since $w_n \to w$ and $w \in \partial B_i$.

This completes the proof that $\tilde{f}$ is continuous on $\Omega_\infty$. The same argument shows that $\tilde{f}^{-1}$ is continuous on $\Omega_\infty^*$, and $\tilde{f}$ is a homeomorphism of $\Omega_\infty$ onto $\Omega_\infty^*$.

Now, we extend $\tilde{f}$ to the whole Riemann sphere in the following way. Suppose that $z \notin \Omega_\infty$. By Corollary \ref{corollary:OmegaInfinity}, the point $z$ belongs to a unique nested sequence of disks $D_k$ of the form $D_k\coloneqq  R_{i_1} \circ \dots \circ R_{i_k}(B_{i_{k+1}})$, where $i_j \neq i_{j+1}$ for all $j\in \N$. Each disk $D_k^*\coloneqq R_{i_1}^* \circ \dots \circ R_{i_k}^*(B_{i_{k+1}}^*)$ is contained in $\widehat{\C}\setminus \br{\Omega_k^*}$ by \eqref{equation:complement} and the disks $D_k^*$ are also nested by \eqref{equation:containment}. Corollary \ref{corollary:OmegaInfinity} implies that the sequence $D_k^*$ must shrink to a single point $w\in \RiemannSphere \setminus \Omega_\infty^*$. We define $\tilde{f}(z)\coloneqq w$. The existence and uniqueness of the sequence $D_k^*$ shrinking to $w$ from Corollary \ref{corollary:OmegaInfinity} implies that this definition gives a bijection $\tilde{f}$ from $\RiemannSphere \setminus \Omega_\infty$ onto $\RiemannSphere \setminus \Omega_\infty^*$.

Observe that for each disk $D_k$ of the form $R_{i_1} \circ \dots \circ R_{i_k}(B_{i_{k+1}})$ we have $\widetilde f( D_k)=D_k^*= R_{i_1}^* \circ \dots \circ R_{i_k}^*(B_{i_{k+1}}^*)$. Indeed, the equality $\widetilde f(D_k\cap \Omega_\infty)= D_k^*\cap \Omega_\infty^*$ follows from the definition of $\widetilde f$ on $\Omega_\infty$ and the fact that it conjugates the Schottky groups of $\Omega$ and $\Omega^*$. The equality $\widetilde f( D_k\setminus \Omega_\infty) =D_k^*\setminus \Omega_\infty^*$ follows from the definition of $\widetilde f$ in the previous paragraph, together with the nesting property \eqref{equation:containment}.

It remains to show that $\tilde{f}$ as defined is a homeomorphism of $\RiemannSphere$. By the compactness of $\RiemannSphere$, it suffices to show that $\tilde{f}$ is continuous. First, note that $\Omega_\infty$ is dense in $\RiemannSphere$, by Proposition \ref{disks:converge to 0}. Continuity will thus follow if we prove that if $z \in \RiemannSphere \setminus \Omega_\infty$ and if $z_n$ is a sequence in $\Omega_\infty$ converging to $z$, then $\tilde{f}(z_n) \to \tilde{f}(z)$. We fix a disk $D_k^*$ as above with $\tilde f(z)\in D_k^*$. Since $z_n\to z$, we have $z_n\in D_k$ for all sufficiently large $n$.  Hence, $\tilde f(z_n)\in D_k^*$ and $\limsup_{n\to\infty}|\tilde f(z_n)-\tilde f(z)|\leq \diam(D_k^*)$. Proposition \ref{disks:converge to 0} now concludes the proof. This completes the proof of the existence of a homeomorphic extension $\tilde{f}\colon \RiemannSphere \to \RiemannSphere$ of $f$ that conjugates the Schottky groups.
\\

Finally, to prove uniqueness, suppose that $g\colon \RiemannSphere \to \RiemannSphere$ is another homeomorphic extension of $f$ that conjugates the Schottky groups of $\Omega$ and $\Omega^*$. Then for $T \in \Gamma(\Omega)$ and $z \in \overline{\Omega}$, we have
$$\tilde{f}(T(z)) = T^*(\tilde{f}(z)) = T^*(g(z)) = g(T(z)),$$
since $\tilde{f}=f=g$ on $\overline{\Omega}$ and both $\tilde{f}$ and $g$ conjugate the Schottky groups. It follows that $\tilde{f} = g$ on $\Omega_\infty$, and hence everywhere on $\RiemannSphere$, by the continuity of $\tilde{f}$ and $g$ and by the density of $\Omega_\infty$.
\end{proof}

\subsection{Quasiconformality}
Our goal now is to show that the map $\tilde{f}$ of Lemma \ref{HomeoExtension} is actually $K$-quasi\-conformal on $\RiemannSphere$, for some $K$
depending only on $\Omega$. First, we give a brief introduction to quasiconformal mappings, {referring the reader to \cite{LEH} for more information.}

Let $K \geq 1$, let $U,V$ be domains in $\C$ and let $f\colon U \to V$ be an orientation-preserving homeomorphism. We say that $f$ is $K$-\textit{quasiconformal} on $U$ if it belongs to the Sobolev space $W_{\loc}^{1,2}(U)$ and satisfies the Beltrami equation
$$\partial_{\overline{z}}f=\mu \, \partial_z f$$
almost everywhere on $U$, for some measurable function $\mu \colon  U  \to \mathbb{D}$ with $\|\mu\|_\infty \leq \frac{K-1}{K+1}$. In this case, the number $K \geq 1$ is called the \textit{quasiconformal dilatation} of $f$ and the function $\mu$ its \textit{Beltrami coefficient}, denoted by $\mu_f$. An orientation-preserving homeomorphism of the Riemann sphere $\widehat \C$ is $K$-quasiconformal if it is $K$-quasiconformal in local coordinates, using the standard conformal charts of $\widehat \C$.

A mapping is conformal if and only if it is $1$-quasiconformal; see \cite[pp.~182--183]{LEH}. This is usually referred to as Weyl's lemma. Furthermore, inverses of $K$-quasiconformal mappings are also $K$-quasiconformal, and the composition of a $K_1$-quasiconformal mapping and a $K_2$-quasi\-conformal mapping is $K_1K_2$-quasi\-conformal; see \cite[p.~17]{LEH}. Another well-known property of quasiconformal mappings is that they preserve sets of area zero; see \cite[Theorem 1.3, p.~165]{LEH}.

The following theorem is of central importance in the theory of quasiconformal mappings; see \cite[Chapter V, p.~191 ff.]{LEH}.

\begin{theorem}[Measurable Riemann mapping theorem]
\label{MRMT}
Let $U$ be a domain in $\RiemannSphere$ and let $\mu\colon U \to \mathbb{D}$ be a measurable function with $\|\mu\|_\infty <1$. Then there exists a quasiconformal mapping $f$ on $U$ such that $\mu=\mu_f$, i.e.,
$$\partial_{\overline{z}}f=\mu \, \partial_z f$$
almost everywhere on $U$. Moreover, the map $f$ is unique up to postcomposition with a conformal map.
\end{theorem}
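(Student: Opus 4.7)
The plan is to reduce to the global case $U=\widehat{\C}$ by extending $\mu$ to vanish on $\widehat{\C}\setminus U$; the resulting global quasiconformal mapping will then have $\mu_f=0$ off $U$ and hence be conformal there by Weyl's lemma, so its restriction to $U$ solves the original problem. Seeking such a global solution normalized by $f(\infty)=\infty$ and $f(z)=z+O(1/|z|)$ as $z\to\infty$, I would use the ansatz
\[
f(z) = z + Ph(z), \qquad Ph(z) \coloneqq -\frac{1}{\pi}\int_{\C}\frac{h(w)}{w-z}\,dA(w),
\]
where $h\in L^p(\C)$ is an unknown density to be determined. A formal computation gives $\partial_{\bar z}(Ph)=h$ and $\partial_z(Ph)=Th$, where
\[
Th(z) \coloneqq -\frac{1}{\pi}\,\mathrm{p.v.}\!\int_{\C}\frac{h(w)}{(w-z)^2}\,dA(w)
\]
is the Beurling--Ahlfors transform, so the Beltrami equation $\partial_{\bar z}f=\mu\,\partial_z f$ is equivalent to the linear fixed-point equation $(I-\mu T)h=\mu$.

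The essential analytic input is that $T$ is bounded on $L^p(\C)$ for every $1<p<\infty$ by Calder\'{o}n--Zygmund theory, and acts as an isometry on $L^2(\C)$. By complex interpolation one has $\|T\|_{L^p\to L^p}\to 1$ as $p\to 2$, so one may choose $p>2$ with $\|\mu\|_\infty\|T\|_{L^p\to L^p}<1$. Then $I-\mu T$ is invertible on $L^p(\C)$ via a convergent Neumann series, producing
\[
h = \sum_{k\geq 0}(\mu T)^k\mu \in L^p(\C).
\]
Standard mapping properties of the Cauchy transform $P$ then show that $f(z)=z+Ph(z)$ lies in $W^{1,p}_{\loc}(\C)$, is continuous on $\widehat{\C}$ with $f(\infty)=\infty$, and satisfies the Beltrami equation almost everywhere.

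The main obstacle is upgrading this weak solution to an actual homeomorphism with positive Jacobian almost everywhere. I would handle this by smooth approximation: choose $\mu_n\in C_c^\infty(\C)$ with $\|\mu_n\|_\infty\leq\|\mu\|_\infty$ and $\mu_n\to\mu$ in $L^p$; solve the corresponding equations to obtain smooth $f_n$, for which injectivity follows from classical elliptic regularity since the Jacobian $|\partial_z f_n|^2-|\partial_{\bar z} f_n|^2=(1-|\mu_n|^2)|\partial_z f_n|^2$ is nonvanishing; establish uniform $W^{1,p}$ and hence uniform H\"{o}lder bounds via Morrey's embedding; and extract a subsequence converging locally uniformly, using uniqueness of the $L^p$ solution $h$ to identify the limit with $f$. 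Injectivity of the limit is preserved because the inverses $f_n^{-1}$ are also quasiconformal with uniformly bounded dilatation and hence equicontinuous, ruling out any topological collapse. Finally, uniqueness up to postcomposition with a conformal map is immediate: if $f_1,f_2$ both satisfy $\mu_{f_i}=\mu$ on $U$, then $g\coloneqq f_2\circ f_1^{-1}$ is quasiconformal with $\mu_g=0$ a.e., and Weyl's lemma gives that $g$ is conformal.
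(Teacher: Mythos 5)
The paper does not prove this theorem; it quotes it as a standard result from \cite[Chapter V]{LEH}, so there is no internal proof to compare against. Your sketch is the classical Ahlfors--Bers proof via the Cauchy transform $P$, the Beurling--Ahlfors transform $T$, and a Neumann series, which is precisely the argument presented in Lehto--Virtanen, so the approach is the right one. Two of your steps are stated more loosely than they can be. First, the Neumann series $\sum_{k\geq0}(\mu T)^k\mu$ converges in $L^p(\C)$ only if $\mu\in L^p(\C)$; for a merely bounded $\mu$ this forces $\mu$ to have (essentially) compact support. Extending $\mu$ by zero outside $U$ delivers this only when $U$ misses a neighborhood of $\infty$, which need not hold (and $U=\widehat{\C}$ is the case actually used in this paper). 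The standard remedy is to conjugate by a M\"obius transformation so that $\mu$ vanishes near $\infty$, or to split $\mu$ into a compactly supported part and a part supported near $\infty$ and solve in two stages, composing the resulting maps. Second, for the smooth approximants $f_n$, a nonvanishing Jacobian gives only \emph{local} injectivity; to conclude that $f_n$ is a homeomorphism of $\widehat{\C}$ one must also verify that $\partial_z f_n\neq0$ (a separate argument for smooth Beltrami coefficients, e.g.\ by constructing $\log\partial_z f_n$) and that $f_n$ is a proper self-map of $\widehat{\C}$ of degree one. These are standard lemmas, but they are not supplied by ``classical elliptic regularity'' alone as your write-up suggests.
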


We can now state the main result of this section.

\begin{theorem}
\label{QCextension}
Let $\Omega$ be a circle domain containing $\infty$ and satisfying the quasihyperbolic condition. Then there exists a constant $K \geq 1$ depending only on $\Omega$ such that every conformal map $f$ of $\Omega$ onto another circle domain $\Omega^*$ with $f(\infty)=\infty \in \Omega^*$ extends to a unique $K$-quasiconformal mapping of $\RiemannSphere$ that conjugates the Schottky groups of $\Omega$ and $\Omega^*$, in the sense of Lemma \ref{HomeoExtension}.
\end{theorem}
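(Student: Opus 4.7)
The plan is to extend $f$ to a homeomorphism of $\widehat{\mathbb C}$ by iterated Schwarz reflection and then promote it to a $K$-quasiconformal map using the absolute-continuity results of Subsection \ref{ac:section}. First, since the complementary components of a circle domain $\Omega^{*}$ are closed disks or points (and hence uniformly fat Jordan regions or points), Theorem \ref{thm:continuous} applies and produces a homeomorphic extension of $f$ from $\overline{\Omega}$ onto $\overline{\Omega^{*}}$; Lemma \ref{HomeoExtension} then delivers the unique homeomorphic extension $\tilde f\colon\widehat{\mathbb C}\to\widehat{\mathbb C}$ conjugating $\Gamma(\Omega)$ and $\Gamma(\Omega^{*})$.

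Next I would observe that $\tilde f$ is automatically conformal on $\Omega_\infty = \bigcup_{T\in\Gamma(\Omega)}T(\Omega)$: on each piece $T(\Omega)$ one has $\tilde f = T^{*}\circ f\circ T^{-1}$, and since $T$ and $T^{*}$ are products of the \emph{same} number $l(T)$ of reflections they share the same orientation type, so the composition with the conformal $f$ is holomorphic; in particular $\mu_{\tilde f}\equiv 0$ on $\Omega_\infty$. The main task is then to show $\tilde f\in W^{1,2}_{\loc}(\widehat{\mathbb C})$ with a global bound independent of the particular $f$. The key analytic input is Proposition \ref{ac:lemma Sobolev}: for a.e.\ circle $\gamma_r$ centered at an arbitrary point one has $\mathcal H^{1}(f(\gamma_r\cap\partial\Omega))=0$, and by Lemma \ref{ac:lemma Real} this implies that $\tilde f$ is absolutely continuous along $\gamma_r$. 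A Fubini argument combined with the Koebe-distortion estimates of Subsection \ref{distortion:section} (applied to $f$ on Whitney cubes inside $B(0,R)\cap \Omega$) converts this into a local $W^{1,2}$ bound for $\tilde f$ near $\partial\Omega$, controlled by an area term $\area(\tilde f(U))\leq \area(\Omega^{*})$. Using the bi-Lipschitz and scale invariance of the quasihyperbolic condition (Remark \ref{quasi:invariant}), one transports the same estimate to every reflected piece $T(\Omega)$, and Proposition \ref{disks:converge to 0} ensures that these contributions sum to a finite total across $T\in\Gamma(\Omega)$. Together with $\mu_{\tilde f}=0$ on the open set $\Omega_\infty$, this yields $\tilde f\in W^{1,2}_{\loc}(\widehat{\mathbb C})$ and shows that the dilatation of $\tilde f$ is bounded by some $K=K(\Omega)$ independent of $f$; uniqueness was already part of Lemma \ref{HomeoExtension}.

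The hard part is the quantitative global Sobolev estimate: while Proposition \ref{ac:lemma Sobolev} handles absolute continuity across $\partial\Omega$ itself, one must transport it across the entire—possibly uncountable—limit set $\widehat{\mathbb C}\setminus \Omega_\infty$ and extract a \emph{single} constant $K=K(\Omega)$ that works for every conformal $f$. Organizing the $W^{1,2}$ contributions across the Schottky orbit relies on Proposition \ref{disks:converge to 0} to govern the shrinking of the reflected disks at each depth $k$, and on Lemma \ref{quasi:shadow sum} to bound the corresponding diameters of reflected pieces of $\partial\Omega$; the bi-Lipschitz invariance of the quasihyperbolic condition (Remark \ref{quasi:invariant}) is what lets the local bound near $\partial\Omega$ be pulled back through each group element with a summable cumulative constant.
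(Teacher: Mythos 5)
Your first two steps agree with the paper: Theorem \ref{thm:continuous} gives the boundary extension, and Lemma \ref{HomeoExtension} supplies the unique homeomorphic extension $\tilde f$ conjugating the Schottky groups. You also correctly observe that $\tilde f$ is conformal on each piece $T(\Omega)$, hence on $\Omega_\infty$. But from that point on your plan diverges from the paper and runs into a genuine gap.

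The paper does \emph{not} try to show $\tilde f\in W^{1,2}_{\loc}(\RiemannSphere)$ directly. Instead, it proves a modulus-distortion inequality (Lemma \ref{ModulusEstimate}) for circular annuli and then invokes the geometric criterion for quasiconformality (Lemma \ref{GeometricDefQC}, via Teichm\"uller's module theorem and a circular-dilatation bound). The crucial point is that Lemma \ref{ModulusEstimate} only ever needs absolute continuity across $\partial\Omega_k$ for a single, fixed, finite $k$: by \eqref{equation:boundaries}, $\partial\Omega_k$ is a countable union of bi-Lipschitz copies $T(\partial\Omega)$, so Proposition \ref{ac:lemma Sobolev} and Remark \ref{quasi:invariant} apply to each piece and the countable union still has $\mathcal H^1(g(\gamma_r\cap\partial\Omega_k))=0$ for a.e.\ $r$. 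The residual set $\RiemannSphere\setminus\Omega_\infty$ never enters; the annulus estimate needs only finitely many reflection levels because $\diam(g(B))\leq 1/2$ is arranged for all components $B$ of $\RiemannSphere\setminus\br{\Omega_k}$ meeting the core annulus.

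Your route, by contrast, requires absolute continuity of $\tilde f$ along a.e.\ circle through the full residual set $\RiemannSphere\setminus\Omega_\infty$, which is an uncountable perfect set, \emph{not} a countable union of bi-Lipschitz images of $\partial\Omega$. Proposition \ref{ac:lemma Sobolev} gives you nothing across the residual set, and ``Proposition \ref{disks:converge to 0} ensures that these contributions sum to a finite total'' does not substitute for it: Proposition \ref{disks:converge to 0} only says that disk diameters shrink, it does not bound any Sobolev norm across the limit set. You acknowledge this is ``the hard part'' but never supply the argument. Moreover, even granting $W^{1,2}_{\loc}$, your last sentence slides over the real issue: the Beltrami coefficient of $\tilde f$ is zero on $\Omega_\infty$, but you give no mechanism to bound $\mu_{\tilde f}$ on the residual set (which may have positive area). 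That bound is exactly what the measurable constant $K$ in the theorem encodes, and it is what Lemma \ref{GeometricDefQC} delivers geometrically. Without either the absolute-continuity estimate across the limit set or a pointwise bound on $\mu_{\tilde f}$ there, the proposal does not establish the statement.
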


The proof of Theorem \ref{QCextension} is based on conformal modulus estimates.

If $A \subset \widehat{\mathbb{C}}$ is a non-degenerate (open) topological annulus, {i.e., $\RiemannSphere \setminus A$ has precisely two connected components each containing more than one point,} then $A$ is conformally equivalent to a unique annulus of the form $A(z_0;1,r)= \{z: 1 < |z-z_0| < r\}$ for some $r>1$. Such an annulus whose boundary components are concentric circles is called a \textit{circular annulus}. We define the \textit{conformal modulus} of $A$ by
$$\operatorname{Mod}(A)\coloneqq\frac{1}{2\pi} \log{r}.$$
The conformal modulus $\Mod(A)$ is a conformal invariant: if two (non-degenerate) annuli $A$ and $A'$ are conformally equivalent, then they have the same conformal modulus.

We shall need the following properties of conformal modulus.

\begin{enumerate}[(M1)]
\item (Superadditivity (or Gr\"otzsch inequality)  \cite[Chapter I, Lemma 6.3]{LEH}) If $A_1, \dots, A_n$ are disjoint annuli contained in $A$ such that each annulus $A_j$ separates the complementary components of $A$ (in other words, the annuli $A_j$, $j=1,\dots,n$, are \textit{nested inside $A$}),  then
$$\operatorname{Mod}(A) \geq \sum_{j=1}^n \operatorname{Mod}(A_j).$$
\item (Teichm\"{u}ller's module theorem \cite[Chapter II, Section 1.3]{LEH}) If $A$ separates the points $0$ and $z_1$ from $z_2$ and $\infty$, then
$$\operatorname{Mod}(A) \leq 2 \mu \left( \sqrt{\frac{|z_1|}{|z_1|+|z_2|}} \right),$$
where $\mu(x)$ is a positive decreasing function of $x \in (0,1)$.
\end{enumerate}

It is well-known that the notion of conformal modulus of annuli is related to quasiconformality.

\begin{lemma}
\label{GeometricDefQC}
Let $f\colon \RiemannSphere \to \RiemannSphere$ be an orientation-preserving homeomorphism with $f(\infty)=\infty$. Suppose that there are positive constants $M_1,M_2$ such that for any circular annulus $A \subset \mathbb{C}$ with $\operatorname{Mod}(A) \geq M_1$ we have
\begin{equation}
\label{ModulusInequality}
\operatorname{Mod}(f(A)) \geq M_2 \operatorname{Mod}(A).
\end{equation}
Then $f$ is $K$-quasiconformal for some $K$ depending only on $M_1$ {and} $M_2$.
\end{lemma}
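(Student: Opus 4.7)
The plan is to verify the metric definition of quasiconformality. By a classical characterization of Gehring (see \cite[Chapter~I]{LEH}), a sense-preserving homeomorphism $f$ of $\RiemannSphere$ with $f(\infty)=\infty$ is $K$-quasiconformal for some $K = K(H)$ whenever its linear dilatation
\[
H_f(z_0) \coloneqq \limsup_{r \to 0} \frac{\max_{|w-z_0|=r} |f(w)-f(z_0)|}{\min_{|w-z_0|=r} |f(w)-f(z_0)|}
\]
is bounded above by $H$ at every $z_0 \in \C$. It therefore suffices to produce such a uniform bound $H = H(M_1, M_2)$.

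Fix $z_0 \in \C$ and $r>0$, and write $L(r) \coloneqq \max_{|w-z_0|=r} |f(w)-f(z_0)|$ and $\ell(r) \coloneqq \min_{|w-z_0|=r} |f(w)-f(z_0)|$. The first step is to apply the hypothesis to the circular annulus $A = A(z_0; r, Rr)$ with $R \geq e^{2\pi M_1}$, so that $\Mod(A) = \tfrac{\log R}{2\pi} \geq M_1$ and hence $\Mod(f(A)) \geq \tfrac{M_2 \log R}{2\pi}$. Choosing $u \in \partial B(z_0, r)$ with $|f(u)-f(z_0)| = L(r)$ and $v \in \partial B(z_0, Rr)$ with $|f(v)-f(z_0)| = \ell(Rr)$, the image $f(A)$ separates $\{f(z_0), f(u)\}$ from $\{f(v), \infty\}$. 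After translating so that $f(z_0)=0$ and invoking Teichm\"{u}ller's module theorem (M2), I obtain
\[
\frac{M_2 \log R}{2\pi} \leq 2\mu\!\left(\sqrt{\tfrac{L(r)}{L(r) + \ell(Rr)}}\right),
\]
which, since $\mu$ is strictly decreasing to $0$, translates into an estimate of the form
\[
L(r) \leq \phi(R)\, \ell(Rr), \quad \phi(R) \to 0 \text{ as } R \to \infty,
\]
for some $\phi$ depending only on $M_1, M_2$. An analogous argument applied to the inner annulus $A(z_0; r/R, r)$ yields $L(r/R) \leq \phi(R)\, \ell(r)$.

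The main obstacle, and the crux of the proof, lies in upgrading these cross-scale estimates to a bound on the same-scale ratio $L(r)/\ell(r)$. The difficulty is intrinsic: no round annulus centered at $z_0$ can separate two distinct points of $\partial B(z_0, r)$, so the hypothesis cannot directly compare $L(r)$ with $\ell(r)$. The plan is to first promote the hypothesis from round annuli to arbitrary topological rings. Combining Gr\"{o}tzsch's superadditivity (M1) with the fact that any ring $A$ of sufficiently large modulus contains a nested family of round sub-annuli whose moduli sum to at least $\Mod(A) - C$ for a universal constant $C$ (constructed by conformally mapping $A$ onto a standard round annulus and controlling the inverse via Koebe distortion), and applying the hypothesis to each sub-annulus, yields a universal inequality
\[
\Mod(f(A)) \geq M_2 \Mod(A) - C'
\]
valid for every ring $A$ of sufficient modulus, with $C'$ depending only on $M_1, M_2$. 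Applying this extended inequality to a (non-round) ring separating $\{z_0, u\}$ from $\{v', \infty\}$, where $v' \in \partial B(z_0, r)$ realizes $\ell(r)$, and once more invoking Teichm\"{u}ller (M2), produces the desired same-scale bound $L(r)/\ell(r) \leq H(M_1, M_2)$. With $H_f$ uniformly bounded, the $K$-quasiconformality of $f$ with $K = K(M_1, M_2)$ follows immediately from the metric characterization.
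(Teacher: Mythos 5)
Your first two steps are sound as far as they go (the cross-scale inequalities relating $L(r)$ to $\ell(Rr)$ are correct, and the ``round annulus inside a ring of large modulus'' fact you invoke for the promotion step is a true, if nontrivial, lemma). The gap is in the last step, and I do not see how to close it along the route you propose.

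The problem is one of direction. The hypothesis --- and your extended version $\operatorname{Mod}(f(A)) \geq M_2 \operatorname{Mod}(A) - C'$ --- only prevents $f$ from \emph{decreasing} moduli; nothing in it rules out $f$ \emph{increasing} a modulus dramatically, which is exactly what would make $L(r)/\ell(r)$ blow up. In the final step you take a ring $A$ separating $\{z_0,u\}$ from $\{v',\infty\}$ with $u,v'\in\partial B(z_0,r)$. Since $u$ and $v'$ are equidistant from $z_0$, Teichm\"{u}ller's module theorem caps $\operatorname{Mod}(A)$ by the universal constant $2\mu(1/\sqrt{2})$, \emph{no matter which such ring $A$ you choose}. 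So either $\operatorname{Mod}(A)$ is below the threshold where your extended inequality applies (and if $M_1$ is large this is guaranteed), or it applies but gives only $\operatorname{Mod}(f(A)) \geq M_2\operatorname{Mod}(A) - C'$ with $\operatorname{Mod}(A)\leq 2\mu(1/\sqrt{2})$ --- a bound that can be negative and in any case does nothing to prevent $\operatorname{Mod}(f(A))$ from being tiny, which is precisely the regime $L(r)/\ell(r)\gg 1$. There is no uniform bound on $H_f$ to be extracted here; the inequalities all run the wrong way.

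The paper sidesteps this entirely by bounding the circular dilatation of $f^{-1}$ rather than of $f$, and for that the hypothesis works at full strength \emph{on round annuli alone}. Fix $z_0$ with $f(z_0)=0$ and look at the circle $\{|w|=\rho\}$ in the range: its preimage is a Jordan curve around $z_0$ with minimal and maximal distances $r_\rho$ and $R_\rho$ from $z_0$. Apply the hypothesis directly to the round annulus $A_\rho=A(z_0;r_\rho,R_\rho)$. Since both boundary circles of $A_\rho$ touch the preimage curve, both boundary components of $f(A_\rho)$ meet $\{|w|=\rho\}$, so Teichm\"{u}ller bounds $\operatorname{Mod}(f(A_\rho))\leq 2\mu(1/\sqrt{2})$, a universal constant. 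The hypothesis then caps $\operatorname{Mod}(A_\rho)=\tfrac{1}{2\pi}\log(R_\rho/r_\rho)$ by $\max\bigl(M_1,\,2\mu(1/\sqrt{2})/M_2\bigr)$. This bounds $H_{f^{-1}}$ uniformly, hence $f^{-1}$ --- and therefore $f$ --- is $K$-quasiconformal with $K=K(M_1,M_2)$. No extension to non-round rings is needed; the promotion lemma, while plausible, is both an extra piece of machinery and, as it turns out, not the missing ingredient.
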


\begin{proof}
The following argument is sketched in \cite{SCH2}.

Fix $z_0 \in \mathbb{C}$, and assume without loss of generality that $f(z_0)=0$. For $\rho>0$ sufficiently small, let
$$R_\rho \coloneqq \max_{\theta} |f^{-1}(\rho e^{i\theta}) - z_0|$$
and
$$r_\rho \coloneqq \min_{\theta} |f^{-1}(\rho e^{i\theta})-z_0|,$$
so that the circular dilatation of $f^{-1}$ at $0$ is
$$H_{f^{-1}}(0) \coloneqq \limsup_{\rho \to 0} \frac{R_\rho}{r_\rho}.$$
If $A_\rho$ is the annulus $\{z: r_\rho < |z-z_0| < R_\rho \}$, then $f(A_\rho)$ is a topological annulus separating $0$ and $\infty$, and both boundary components of $f(A_\rho)$ intersect the circle $\{w:|w|=\rho\}$. By Teichm\"{u}ller's module theorem (M2), $\operatorname{Mod}(f(A_\rho))$ is uniformly bounded above, independently of $z_0$ and $\rho$. The assumption \eqref{ModulusInequality} then gives a uniform upper bound on $\operatorname{Mod}(A_\rho) = \frac{1}{2\pi}\log{(R_\rho/r_\rho)}$. It follows that the circular dilatation of $f^{-1}$ is uniformly bounded. By \cite[Chapter IV, Theorem 4.2]{LEH}, the map $f^{-1}$, and thus also $f$, is $K$-quasiconformal for some $K$ depending only on $M_1,M_2$.
\end{proof}

Now we return to the proof of Theorem \ref{QCextension}.

\begin{proof}[Proof of Theorem \ref{QCextension}]
Let $\Omega$ be a circle domain containing $\infty$ and satisfying the quasihyperbolic condition, and let $f$ be a conformal map of $\Omega$ onto another circle domain $\Omega^{*}$ with $f(\infty)=\infty \in \Omega^*$. Recall that by Lemma \ref{HomeoExtension}, the map $f$ extends to a unique homeomorphism of $\RiemannSphere$ that conjugates the Schottky groups of $\Omega$ and $\Omega^*$. For convenience, we denote this extension by the same letter $f$. We have to show that $f \colon \RiemannSphere \to \RiemannSphere$ is $K$-quasiconformal, for some constant $K \geq 1$ depending only on $\Omega$. Assuming the following  lemma, we finish the proof.
\begin{lemma}
\label{ModulusEstimate}
There exist positive constants $C_1$ and $C_2$ such that if $A \subset \mathbb{C}$ is any circular annulus with $\operatorname{Mod}(A) \geq C_1$, then
$\operatorname{Mod}(f(A)) \geq C_2.$
\end{lemma}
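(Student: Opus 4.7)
The plan is to exploit the superadditivity of conformal modulus (property (M1)) together with the fact that $f$ is conformal on each reflected copy $T(\Omega)$ for $T\in \Gamma(\Omega)$ (by Lemma \ref{HomeoExtension} and a parity count of the reflections involved). After translating so that the center of the annulus is at $0$ (which preserves moduli), I would decompose $A = A(0;r,R)$ into concentric dyadic sub-annuli
\[
A_k = A(0; 2^k r, 2^{k+1} r), \qquad k = 0, \dots, N-1,
\]
with $N \asymp \Mod(A) \geq C_1$. Each $A_k$ has modulus $(\log 2)/(2\pi)$, and the $A_k$ are pairwise disjoint and nested inside $A$. By (M1),
\[
\Mod(f(A)) \geq \sum_{k=0}^{N-1} \Mod(f(A_k)),
\]
so it suffices to establish a uniform lower bound $\Mod(f(A_k)) \geq c_0 > 0$ that holds for at least a constant proportion of the indices $k$ (or even for a single one, when $C_1$ is large enough).

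For each individual $A_k$, the plan is the following. By Proposition \ref{disks:converge to 0}, the diameters of the components of $\widehat{\mathbb{C}}\setminus \overline{\Omega_\ell}$ tend to $0$ as $\ell \to \infty$, so one may choose $\ell = \ell(k)$ large enough that every such component has diameter much smaller than the width $2^k r$ of $A_k$. Then $A_k$ is covered in large part by $\overline{\Omega_\ell}$, on which $f$ is piecewise conformal via the reflection formula of Lemma \ref{HomeoExtension}. Koebe's distortion theorem (Lemmas \ref{quasi:koebe distortion} and \ref{quasi:koebe}) provides uniform local distortion estimates for $f$ on each Whitney cube inside any reflected copy $T(\Omega) \subset \Omega_\ell$. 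A length-area estimate carried out along the family of concentric circles $\gamma_\rho \subset A_k$, combined with the shadow-sum bound of Lemma \ref{quasi:shadow sum}, should then yield $\Mod(f(A_k)) \geq c_0$.

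The main obstacle lies in the sub-annuli $A_k$ that genuinely straddle the limit set $\widehat{\mathbb{C}}\setminus \Omega_\infty$ and cross many boundary circles of reflected copies of $\Omega$. Here I would adapt the transboundary chain arguments used in the proofs of Lemmas \ref{lemma:circles to circles} and \ref{lemma:points}: by Remark \ref{quasi:invariant}, the quasihyperbolic condition is invariant under Euclidean isometries and scalings, so it transfers uniformly to every reflected copy $T(\Omega)$ with constants depending only on $\Omega$. Invoking Corollary \ref{quasi:detours:cor} inside the reflected domain containing the relevant portion of $A_k$ produces a transboundary chain whose Whitney-cube contributions are bounded by a tail of $\sum_Q s(Q)^2$ and whose complementary-disk contributions are controlled by the fatness estimates of Subsection \ref{sec:fatness}; both can be made arbitrarily small in exactly the same way as in Section \ref{sec4}. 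Assembling the per-sub-annulus bounds via (M1) then delivers $\Mod(f(A)) \geq C_2$ once $C_1$ is chosen sufficiently large, finishing the proof.
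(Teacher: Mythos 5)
Your plan captures the broad flavour — truncate the sphere picture to a finite‐generation set $\Omega_k$ so that the complementary pieces are fat disks, run a length-area estimate along circles, and control the contribution of those disks via fatness. But it misses the single ingredient that the paper singles out as the crux of this lemma (and as the point where the proof must depart from He--Schramm): the absolute continuity of the extended map along almost every circle. Once a circle $\gamma_\rho$ crosses $\partial\Omega_k$, you cannot simply write $\diam(g(\gamma_\rho\cap\overline{\Omega_k}))\le\int_{\gamma_\rho\cap\Omega_k}|g'|$ without first knowing that $\mathcal H^1\bigl(g(\gamma_\rho\cap\partial\Omega_k)\bigr)=0$ for a.e.~$\rho$, and that $g\circ\gamma_\rho$ satisfies a Banach--Zaretsky type inequality. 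This is exactly what Proposition~\ref{ac:lemma Sobolev} (transferred from $\Omega$ to $\partial\Omega_k=\bigcup_{l(T)\le k}T(\partial\Omega)$ by bi-Lipschitz invariance of the quasihyperbolic condition under the reflections, and summed over the countably many $T$) together with Lemma~\ref{ac:lemma Real} provide. Your sketch never invokes these, and without them the integral inequality along circles is not justified; one cannot sidestep this with detours or transboundary chains, because the set $\gamma_\rho\cap\partial\Omega_k$ is in general uncountable and of positive length.

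The other ingredients you propose are also misdirected here. The dyadic decomposition of $A$ into $N\asymp\Mod(A)$ sub-annuli and the superadditivity (M1) are unnecessary for Lemma~\ref{ModulusEstimate}: the conclusion is a fixed lower bound $C_2$, not one proportional to $\Mod(A)$, so one sub-annulus $A_0\cong A(0;1,2)\subset\subset A$ suffices (the full superadditivity argument is then exactly what Theorem~\ref{QCextension} uses to upgrade this to a proportional bound). Likewise, the transboundary chains of Corollary~\ref{quasi:detours:cor} and the shadow sum of Lemma~\ref{quasi:shadow sum} play no role in the paper's proof of this lemma: there is no ``large'' target cluster set here to aim a chain at, and the machinery of Sections~\ref{sec4}--\ref{sec5} was designed for that different purpose. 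Instead, the paper takes a single sufficiently large but \emph{fixed} $k$ (depending on $A,A_0,f$) so that each complementary disk $B$ of $\overline{\Omega_k}$ meeting $A_0$ is contained in $A$, is well inside $A$, and has $g(B)=h(f(B))$ uniformly fat of small diameter by (F4); then applies Lemma~\ref{ac:lemma Real} to get, for a.e.\ $r\in(1,2)$,
\begin{align*}
1\le\int_{\gamma_r\cap\Omega_k}|g'|\,ds+\sum_{B\in H,\ B\cap\gamma_r\neq\emptyset}\diam(g(B));
\end{align*}
integrates in $r$ and applies Cauchy--Schwarz, using the area of $A^\#$ and the fatness of the $g(B)$, to conclude $R^2-1\gtrsim 1$. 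Your proposal would need to be rebuilt around the absolute-continuity lemma to close the gap.
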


Let $L=e^{2\pi C_1}>1$, and let $A \subset \mathbb{C}$ be a circular annulus with
$$\operatorname{Mod}(A) > \frac{1}{2\pi}\log{L}=C_1.$$
Without loss of generality, assume that $A=\{z: 1<|z|<r\}$ where $r>L$. Let $n \in \mathbb{N}$ be such that $L^n < r \leq L^{n+1}$, and for $j \geq 1$, let $A_j\coloneqq\{z: L^{j-1}<|z|<L^{j}\}$, so that $\operatorname{Mod}(A_j)=C_1$. By Lemma \ref{ModulusEstimate} and property (M1) of conformal modulus, we have
\begin{eqnarray*}
\operatorname{Mod}(f(A)) &\geq& \sum_{j=1}^{n} \operatorname{Mod}(f(A_j))\\
&\geq& nC_2\\
&=&  \frac{n}{n+1} \frac{2\pi C_2}{\log{L}} \left(\frac{1}{2\pi} (n+1)\log{L}\right)\\
&\geq& \frac{1}{2} \frac{2\pi C_2}{\log{L}} \left( \frac{1}{2\pi} \log{r}\right) \\
&\eqqcolon& M_2 \operatorname{Mod}(A).
\end{eqnarray*}
The result now follows from Lemma \ref{GeometricDefQC}.
\end{proof}

\subsection{Proof of Lemma \ref{ModulusEstimate}}
The proof will be very similar to the proofs of Lemma \ref{lemma:circles to circles} and Lemma \ref{lemma:points}, but here we shall need to make use of the absolute continuity lemmas from Subsection \ref{ac:section}. This is a complication that is not present in the proof of He and Schramm \cite[Lemma 4.2]{SCH2}, because of their assumption that the boundary of the domain has $\sigma$-finite length.

We fix a circular annulus $A$ of modulus greater than $\frac{1}{2\pi} \log{2}$ so that it contains the closure of a circular annulus $A_0$ that is homothetic to the annulus $A(0;1,2)$. Using a homothety, we may assume that $A_0=A(0;1,2) \subset \subset A$. Let $A^*=f(A)$, and consider a conformal map $h$ from $A^*$ to a circular annulus $A^\#=A(0;1,R)$. It suffices to bound the modulus of $A^\#$, i.e., $\frac{1}{2\pi} \log R$, from below.

Consider the sets $\overline{\Omega_k}$ of Subsection \ref{subsechom}. The set $\widehat{\C}\setminus \overline{\Omega_k}$ consists of (at most) countably many open disks (by \eqref{equation:complement}) with diameters converging to $0$ as $k\to\infty$, by Proposition \ref{disks:converge to 0}. Hence, if $k$ is sufficiently large, we may {assume} that each such complementary disk $B$ intersecting the annulus $A_0$ is so small that it is contained in $A$, and has distance at least $2\diam(B)>0$ from $\partial A$. Since $f$ is a homeomorphism, the disk $f(B)$ also has a distance $2\diam(f(B))>0$ from $\partial A^*$, whenever $B\cap A_0\neq \emptyset$, provided that we pick an even larger $k$. Property (F4) in Subsection \ref{sec:fatness} now shows that $h(f(B))$ is $c$-fat for a universal $c$. Moreover, we may have that $\diam(h(f(B))) \leq 1/2$, if $k$ is sufficiently large.

Summarizing, if we consider the map $g=h\circ f\colon  A\to A^\#$, we have that each complementary disk $B\subset \widehat{\C}\setminus \overline{\Omega_k}$ intersecting $A_0\subset\subset A$ is contained in $A$ and its image $g(B)$ is $c$-fat and has diameter bounded above by $1/2$. Here, $k$ is fixed, and is sufficiently large, depending on $A, A_0$, and $f$. We  denote by $H$ the collection of these disks $B$.

By Proposition \ref{ac:lemma Sobolev} we have that $\mathcal H^1(f(\gamma_r\cap \partial \Omega))=0$ for a.e.\ $1<r<2$, where $\gamma_r\subset A_0$ denotes the circle of radius $r$ around $0$. We claim that this remains valid if $\partial \Omega$ is replaced with $\partial \Omega_k$. Indeed, reflecting across a circle component of $\partial \Omega$ yields a domain $R(\Omega)$ inside this circle that is bi-Lipschitz-equivalent to $\Omega$, away from $\infty$. In general, if $T\in \Gamma(\Omega)$, then $T$ is the composition of finitely many reflections, so $T(\Omega)$ is bi-Lipschitz-equivalent to $\Omega$ away from $\infty$. By Remark \ref{quasi:invariant}, the quasihyperbolic condition \eqref{quasi:condition} is invariant under bi-Lipschitz maps, so the reflected domain $T(\Omega)$ also satisfies it. It follows from Proposition \ref{ac:lemma Sobolev} that $\mathcal H^1(f(\gamma_r\cap \partial T(\Omega)))=H^1(f(\gamma_r\cap  T(\partial \Omega)))=0$ for a.e.\ $1<r<2$. Note that there are at most countably many $T\in \Gamma(\Omega)$ with $l(T)\leq k$. Therefore, by \eqref{equation:boundaries} we have
\begin{align*}
\mathcal H^1(f(\gamma_r\cap \partial \Omega_k)) = \mathcal H^1\Bigg(f\bigg(\gamma_r\cap \bigcup_{l(T)\leq k}T(\partial \Omega )\bigg)\Bigg) \leq \sum_{l(T)\leq k} \mathcal H^1(f(\gamma_r\cap T(\partial \Omega)))=0
\end{align*}
for a.e.\ $1<r<2$. Since the conformal map $h\colon A^*\to A^\#$ is smooth, it is Lipschitz continuous on $f(\gamma_r)\subset f(A_0)\subset \subset A^*$, so it follows that
\begin{align}\label{estimate:H1}
\mathcal H^1( g(\gamma_r\cap \partial \Omega_k))=0
\end{align}
for a.e.\ $1<r<2$.

We fix such an $r\in (1,2)$. Using this absolute continuity property we wish to apply Lemma \ref{ac:lemma Real} in order to prove the estimate
\begin{align}\label{estimate:g}
1 \leq \int_{\gamma_r \cap  \Omega_k} |g'|\, ds + \sum_{\substack{B\in H\\ B\cap \gamma_r\neq \emptyset}} \diam(g(B)).
\end{align}
Consider the function $g\circ \gamma_r(x)= g(re^{ix})$, $x\in  \R$, and let $Z=  \gamma_r^{-1}(\br{\Omega_k})$ and $K=\gamma_r^{-1}(\partial \Omega_k)$; here $\gamma_r$ is treated as a complex-valued function of a real variable, rather than as a set. Note that $\partial Z\subset \gamma_r^{-1}(\partial {\br {\Omega_k}})$; this follows from the general fact that if $\phi\colon X\to Y$ is a continuous map between topological spaces and $C\subset Y$ is a closed set, then $\partial \phi^{-1}(C)\subset \phi^{-1}(\partial C)$. We trivially have $\partial \br{\Omega_k} \subset \partial \Omega_k$, so $\partial Z$ is contained in $K$, as required in the statement of Lemma \ref{ac:lemma Real}. Consider now the function $G\coloneqq (g\circ \gamma_r)|_{Z}$ and observe that  $G$ is locally absolutely continuous on $Z\setminus K =  \gamma_r^{-1}(\Omega_k)$; indeed if $\gamma_r(x)=re^{ix}\in \Omega_k$, then $g=h\circ f$ is conformal in a neighborhood of $re^{ix}$, so $G'$ exists and is continuous in a neighborhood of $x$. Finally, by \eqref{estimate:H1} we have
$$\mathcal H^1(G(K))= \mathcal H^1( g(\gamma_r\cap \partial \Omega_k))=0.$$
We are exactly in the setting of Lemma \ref{ac:lemma Real}, so if $(x_i,y_i)$, $i\in I$, denote the complementary intervals of $Z$, then
\begin{align*}
|G(x)-G(y)| \leq  \int_{[x,y]\cap (Z\setminus K)} |G'(x)|dx + \sum_{[x,y]\cap (x_i,y_i)\neq \emptyset}|G(x_i)-G(y_i)|
\end{align*}
for all $x,y\in Z$. Since $\widehat{\C} \setminus \br{\Omega_k}$ is the union of disjoint open disks by \eqref{equation:complement}, each complementary interval $(x_i,y_i)$ of $Z$ is mapped by $\gamma_r$ to an arc contained in a disk $B\subset \widehat{\C} \setminus \br {\Omega_k}$ and whose endpoints lie in $\partial B$. Moreover, if $x_0\in Z$, then the complementary intervals $(x_i,y_i)$ of $Z$ contained in $[x_0,x_0+2\pi]$ correspond to distinct disks $B\subset \widehat{\C} \setminus \br {\Omega_k}$, because the intersection of an open disk $B$ with the circle $\gamma_r([x_0,x_0+2\pi])$ cannot consist of more than one arcs. Hence, the above estimate implies that
\begin{align*}
|G(x)-G(y)|\leq \int_{\gamma_r \cap  \Omega_k} |g'|\, ds + \sum_{\substack{B\in H\\ B\cap \gamma_r\neq \emptyset}} \diam(g(B))
\end{align*}
for all $x,y\in [x_0,x_0+2\pi]$. If we show that there exists $y_0\in [x_0,x_0+2\pi]$ with $|G(x_0)-G(y_0)|\geq 1$, then we arrive to the desired inequality \eqref{estimate:g}.

The point $G(x_0)\in A^{\#}$ lies outside the ball $B(0,1)$. Since $g(\gamma_r)$ surrounds the ball $B(0,1)$, there exists a point $w_0\in g(\gamma_r)$ that is ``antipodal" to $G(x_0)$, in the sense that it lies on the line through $0$ and $G(x_0)$ and $|G(x_0)-w_0|\geq 2$. If $w_0\in G(Z)$, then there exists $y_0\in [x_0,x_0+2\pi]$ such that $G(y_0)=w_0$, so our claim is proved. If $w_0\notin G(Z)$, then $w_0\in g(B)$ for some disk $B\in H$ intersecting $\gamma_r$. Then there exists a complementary interval $(x_i,y_i)\subset [x_0,x_0+2\pi]$ of $Z$ with $\gamma_r(x_i),\gamma_r(y_i)\in \partial B$, so $G(y_i)\in \partial g(B)$. We have $|w_0-G(y_i)| \leq \diam(g(B))\leq 1/2$. Therefore $|G(x_0)-G(y_i)|\geq |G(x_0)-w_0|-|w_0-G(y_i)| \geq 1$, and our claim is proved with $y_0=y_i$.

Now we proceed exactly as in the the proof of \cite[Lemma 4.2]{SCH2}. We integrate \eqref{estimate:g} over $r\in (1,2)$ to get
\begin{align}\label{ModulusEstimate:ineq}
1\leq \int_{A_0\cap \Omega_k} |g'|  + \sum_{B\in H} \diam(g(B)) d(B),
\end{align}
where $d(B)=\mathcal H^1( \{s\in [1,2]: B\cap B(0,s)\neq \emptyset \} )$, and $d(B)^2\lesssim \area(B\cap A_0)$ by (F1). By the Cauchy-Schwarz inequality, the square of the integral term is bounded by
\begin{align*}
\int_{A_0\cap \Omega_k} |g'|^2 \cdot \area(A_0\cap \Omega_k) \leq \area(A^\#)\cdot \area(A_0) =  \pi(R^2-1)\cdot 3\pi.
\end{align*}
Similarly, using the fatness of $g(B)$, the square of the summation term in \eqref{ModulusEstimate:ineq} can be bounded by
\begin{align*}
\sum_{B\in H} \diam(g(B))^2 \cdot \sum_{B\in H} d(B)^2 &\lesssim \sum_{B\in H} \area(g(B)) \cdot \sum_{B\in H} \area(B\cap A_0)\\
&\lesssim \area(A^\#)\cdot \area(A_0) = 3\pi^2(R^2-1).
\end{align*}
Summarizing, by \eqref{ModulusEstimate:ineq} we have $R^2-1\geq C$ for a universal $C>0$, which implies that $\log R$ is bounded from below, as desired.\qed

\section{Proof of Theorem \ref{mainthm1}}
\label{sec8}

In this section, we conclude the proof of Theorem \ref{mainthm1}. Let $\Omega$ be a circle domain containing $\infty$ and satisfying the quasihyperbolic condition, and let $f\colon  \Omega \to \Omega^*$ be a conformal map of $\Omega$ onto another circle domain $\Omega^*$. Without loss of generality, assume that $f(\infty)=\infty \in \Omega^*$. By Theorem \ref{QCextension}, the map $f$ extends to a $K$-quasiconformal mapping of the whole sphere that conjugates the Schottky groups, for some $K \geq 1$ depending only on $\Omega$. Again, we denote the extension by the same letter $f$.

We now use quasiconformal deformation of Schottky groups to prove that $f$ must be a M\"{o}bius transformation, thereby showing that $\Omega$ is conformally rigid. First, we need some preliminaries on invariant Beltrami coefficients with respect to a Schottky group.

Let $V \subset \mathbb{C}$ be open and let $\mu\colon V \to \mathbb{D}$ be measurable. If $f \colon  U \to V$ is a quasiconformal mapping, then we define a measurable function $f^{*}\mu\colon U \to \mathbb{D}$, called the \textit{pullback} of $\mu$ under $f$, by
$$f^{*}\mu \coloneqq  \frac{\partial_{\overline{z}}f + (\mu \circ f) \overline{\partial_z f}}{\partial_z f + (\mu \circ f)\overline{\partial_{\overline{z}}f}}.$$
Similarly, we define the Beltrami coefficient of an orientation-reversing quasiconformal mapping $f\colon U \to V$ by $\mu_f \coloneqq  \mu_{\overline{f}}$ and the pullback $f^{*}\mu$ by
$$f^{*}\mu = \frac{\overline{\partial_{z}f} + \overline{(\mu \circ f)} \partial_{\overline{z}} f}{\overline{\partial_{\overline{z}} f} + \overline{(\mu \circ f)}\partial_{z}f}.$$
Here by \textit{orientation-reversing} quasiconformal mapping we mean the complex conjugate of a quasiconformal mapping. The notation $f^*$ here should not be confused with the notation in Proposition \ref{proposition:topological}.

With these definitions, the coefficient $\mu_f$ is simply the pullback of $\mu_0 \equiv 0$ under $f$. Moreover, pullbacks satisfy the natural property
$$(f \circ g)^{*}\mu = g^*(f^*\mu).$$

Now, we say that a measurable function $\mu\colon \RiemannSphere \to \mathbb{D}$ is \textit{invariant with respect to} a Schottky group $\Gamma(\Omega)$ if
$T^*\mu=\mu$ almost everywhere on $\RiemannSphere$, for every $T \in \Gamma(\Omega)$. This is equivalent to
$$\mu = (\mu \circ T) \overline{\partial_z T} / \partial_z T$$
or
$$\mu = \overline{(\mu \circ T)} \partial_{\overline{z}} T / \overline{\partial_{\overline{z}}T}$$
depending on whether $T$ is M\"{o}bius or anti-M\"{o}bius, respectively.

\begin{proposition}
\label{PropInvariant}
Let $f\colon \RiemannSphere \to \RiemannSphere$ be a quasiconformal mapping and let $\Omega$ be a circle domain. Then the Beltrami coefficient $\mu_f$ is invariant with respect to the Schottky group $\Gamma(\Omega)$ if and only if $f$ maps $\Omega$ onto another circle domain $\Omega^*$ and it conjugates $\Gamma(\Omega)$ and $\Gamma(\Omega^*)$.
\end{proposition}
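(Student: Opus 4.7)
The plan is to establish both implications by tracking how Beltrami coefficients transform under composition with elements of the Schottky groups. The central tool is the chain rule for pullbacks, $(f\circ g)^{*}\mu = g^{*}(f^{*}\mu)$, combined with the observation that every element of a Schottky group is either M\"obius (if it is a product of an even number of reflections) or anti-M\"obius (an odd number), and therefore has identically vanishing Beltrami coefficient.

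For the forward direction, I would assume $f(\Omega) = \Omega^{*}$ is a circle domain and that $T^{*} \coloneqq f\circ T\circ f^{-1}$ lies in $\Gamma(\Omega^{*})$ for every $T\in \Gamma(\Omega)$. Using $\mu_f = f^{*}\mu_0$ where $\mu_0 \equiv 0$, the chain rule gives
\begin{align*}
T^{*}\mu_f = T^{*}(f^{*}\mu_0) = (f\circ T)^{*}\mu_0 = (T^{*}\circ f)^{*}\mu_0 = f^{*}(\mu_{T^{*}}) = f^{*}\mu_0 = \mu_f,
\end{align*}
where $\mu_{T^{*}} \equiv 0$ is used in the penultimate step. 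This immediately yields $\Gamma(\Omega)$-invariance of $\mu_f$.

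For the converse, I would start from $\mu_{f\circ T} = T^{*}\mu_f = \mu_f$ for each $T\in \Gamma(\Omega)$. The uniqueness clause of the measurable Riemann mapping theorem (Theorem \ref{MRMT}), applied to the two quasiconformal self-maps $f\circ T$ and $f$ of $\RiemannSphere$ with equal Beltrami coefficients, forces $S_T \coloneqq f\circ T\circ f^{-1}$ to be a M\"obius or anti-M\"obius self-map of $\RiemannSphere$, depending on the orientation of $T$. Specializing to a generator $T = R_j$, the map $S_{R_j}$ is an anti-M\"obius involution whose fixed-point set equals $f(\gamma_j)$; the plan is then to argue that $f(\gamma_j)$ is a round circle $\gamma_j^{*}$ and $S_{R_j}$ is the reflection $R_j^{*}$ across it. Because $f$ is a homeomorphism of $\RiemannSphere$, point components of $\partial \Omega$ correspond to point components of $\partial f(\Omega)$, so $\Omega^{*} \coloneqq f(\Omega)$ is a circle domain. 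Extending the assignment $R_j \mapsto R_j^{*}$ multiplicatively to all reduced words then realizes $f$ as a conjugation from $\Gamma(\Omega)$ onto $\Gamma(\Omega^{*})$.

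The hardest step is the geometric conclusion in the converse: upgrading ``$S_{R_j}$ is an anti-M\"obius involution'' to ``$S_{R_j}$ is the reflection in a round circle''. I would dispatch this by invoking the classification of anti-M\"obius involutions of $\RiemannSphere$: the only such involution without fixed points is conjugate to $z\mapsto -1/\overline{z}$, while every other anti-M\"obius involution is reflection across a round circle. Since $R_j$ fixes the circle $\gamma_j$ and $f$ is a homeomorphism, $f(\gamma_j)$ is a nonempty fixed set of $S_{R_j}$, which rules out the fixed-point-free case and forces $f(\gamma_j)$ to be a round circle. Once this geometric step is in hand, the rest of the argument is purely formal bookkeeping with the pullback identity.
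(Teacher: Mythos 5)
Your proof is correct and follows essentially the same route as the paper's: the pullback chain rule shows that each conjugate $f\circ R_j\circ f^{-1}$ has vanishing Beltrami coefficient, hence is an anti-M\"{o}bius involution whose fixed set $f(\gamma_j)$ must be a round circle, which yields both the circle-domain conclusion and the conjugation of the Schottky groups, and the reverse implication is the same chain-rule computation run backwards. The only cosmetic deviation is that you pass through the uniqueness clause of the measurable Riemann mapping theorem to conclude $f\circ T\circ f^{-1}$ is (anti-)M\"{o}bius, whereas the paper reads this directly from $(f\circ R_j\circ f^{-1})^{*}\mu_0=\mu_0$ via Weyl's lemma; these are equivalent.
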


\begin{proof}
Suppose that $\mu_f$ is invariant with respect to $\Gamma(\Omega)$. To prove that $\Omega^*=f(\Omega)$ is a circle domain, it suffices to show that $f(\gamma_j)$ is a circle for each circle $\gamma_j$ in $\partial \Omega$. Fix $\gamma_j$, and as before let $R_j$ be the reflection across $\gamma_j$. We have

$$(f \circ R_j \circ f^{-1})^{*}\mu_0 = (f^{-1})^*(R_j^*(f^{*}\mu_0)) = (f^{-1})^*(f^{*}\mu_0)=\mu_0,$$
where we used the fact that $R_j^{*}\mu_f = \mu_f$, by invariance of $\mu_f$. This shows that the Beltrami coefficient of the map $f \circ R_j \circ f^{-1}$ is zero almost everywhere, so that it is anti-M\"{o}bius. But it fixes $f(\gamma_j)$, so that $f(\gamma_j)$ must be a circle and ${R_j^*\coloneqq }f \circ R_j \circ f^{-1}$ is the reflection across that circle. It follows that $\Omega^*$ is a circle domain, as required. Moreover, we clearly have $$T^* = f \circ T \circ  f^{-1} \qquad (T \in \Gamma(\Omega)),$$ where $T^*$ denotes the element of $\Gamma(\Omega^*)$ corresponding to $T\in \Gamma(\Omega)$ (see also the comments before Lemma \ref{HomeoExtension}), so that $f$ indeed conjugates $\Gamma(\Omega)$ and $\Gamma(\Omega^*)$.

Conversely, suppose that $f$ maps $\Omega$ onto another circle domain $\Omega^*$, and that it conjugates $\Gamma(\Omega)$ and $\Gamma(\Omega^*)$. Then for each $j$, the map $f \circ R_j \circ f^{-1}$ is anti-M\"{o}bius, so that
$$\mu_0 = (f \circ R_j \circ f^{-1})^{*}\mu_0 = (f^{-1})^*(R_j^*(f^{*}\mu_0)).$$
Taking the pullback of both sides by $f$ gives $\mu_f = R_j^{*}(\mu_f)$. This {implies} that $\mu_f$ is invariant with respect to $\Gamma(\Omega)$.
\end{proof}

We can now proceed with the proof of Theorem \ref{mainthm1}.

\begin{proof}
Recall that by Theorem \ref{QCextension}, every conformal map of $\Omega$ onto another circle domain extends to a unique $K$-quasiconformal mapping of $\RiemannSphere$ that conjugates the Schottky groups, for some $K \geq 1$ depending only on $\Omega$.

Let $f$ be such a map, and suppose that $f$ is not M\"{o}bius, so that $K>1$ and $\|\mu_f\|_{\infty}>0$. Let $(K-1)/(K+1) < c <1$, and set $\nu\coloneqq  (c/\|\mu_f\|_\infty)\mu_f$. By Proposition \ref{PropInvariant}, the coefficient $\mu_f$ is invariant with respect to $\Gamma(\Omega)$, and thus so is $\nu$. By the measurable Riemann mapping theorem (Theorem \ref{MRMT}), there is a quasiconformal mapping $h$ of $\RiemannSphere$ with $\mu_h = \nu$ and $h$ conformal on $\Omega$. Again by Proposition \ref{PropInvariant}, we have that $h$ maps $\Omega$ onto another circle domain and it conjugates the Schottky groups $\Gamma(\Omega)$ and $\Gamma(h(\Omega))$.

Now, again by Theorem \ref{QCextension}, the restriction of $h$ to $\Omega$ has a unique $K$-quasi\-conformal extension $\tilde{h}$ to the whole sphere that also conjugates $\Gamma(\Omega)$ and $\Gamma(h(\Omega))$. By uniqueness, it follows that $h=\tilde{h}$ everywhere on $\RiemannSphere$. But this contradicts the fact that $h$ is not $K$-quasiconformal, since $\|\mu_h\|_{\infty}=\|\nu\|_\infty= c>(K-1)/(K+1)$.
\end{proof}

\begin{remark}
The above proof relies on the fact that the {quasiconformal dilatation} $K$ in Theorem \ref{QCextension} is uniform, in the sense that it does not depend on the conformal map. We mention though that in order to prove that $\Omega$ is rigid, it actually suffices to prove that every conformal map of $\Omega$ onto another circle domain extends to a quasiconformal mapping of the whole sphere (regardless of the quasiconformal dilatation). This follows from \cite[Theorem 5]{YOU2}.

\end{remark}

\section{Further remarks on the rigidity conjecture}
\label{sec9}

Recall from the introduction that the rigidity conjecture states that a circle domain $\Omega$ is conformally rigid if and only if its boundary is conformally removable. If there are only point boundary components, then the rigidity of $\Omega$ clearly implies the removability of $\partial \Omega$. Whether the converse holds remains unknown even in this special case. The goal of this section is to investigate the properties of a possible counterexample.

Suppose that $\Omega$ is a circle domain having only point boundary components and assume that $\partial \Omega$ is removable but $\Omega$ is not rigid. Then there exists a non-M\"{o}bius conformal map $f\colon  \Omega \to \Omega^*$ with $f(\infty)=\infty \in \Omega^*$, where $\Omega^*$ is another circle domain. In particular, the domain $\Omega^*$ is also non-rigid.

Now, we note that $\partial \Omega^*$ necessarily contains at least one circle. Indeed, if not, then $f$ would be a conformal map between the complements of two totally disconnected compact sets and thus would extend to a homeomorphism of $\RiemannSphere$ by Lemma \ref{lemma:extension}. But then $f$ would be a M\"{o}bius transformation, by the removability of $\partial \Omega$.

We mention that in \cite[Theorem 4.1]{GEM}, based on results of Ahlfors and Beurling \cite{AHB}, Gehring and Martio construct a circle domain $\Omega$ having only point boundary components and a conformal map of $\Omega$ onto another circle domain $\Omega^{*}$ having exactly one boundary circle. In their example, however, the boundary of $\Omega$ has positive area and hence is not removable. The question is whether one can construct such an example but with removable boundary.

The following result states that if $\partial \Omega$ is removable, then $\partial \Omega^{*}$ must in fact contain a lot of circles.

\begin{theorem}
\label{CounterExample}
Suppose that $f\colon \Omega \to \Omega^{*}$ is a non-M\"{o}bius conformal map between two circle domains with $f(\infty)=\infty \in \Omega^{*}$, and assume that $\Omega$ has only point boundary components. If $\partial \Omega$ is removable, then $\Omega^{*}$ has the property that every point $w_0 \in \partial \Omega^{*}$ that is not a point boundary component is the accumulation point of an infinite sequence of distinct circles in $\partial \Omega^{*}$.
\end{theorem}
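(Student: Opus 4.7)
The plan is to argue by contradiction. Suppose $w_0\in\partial\Omega^*$ lies on a boundary circle $C^*\subset\partial\Omega^*$ and only finitely many distinct circles of $\partial\Omega^*$ accumulate at $w_0$; let $z_0\in\partial\Omega$ denote the unique point with $f^*(\{z_0\})=C^*$. Shrinking, I choose a disk $D$ around $w_0$ meeting no circle of $\partial\Omega^*$ except $C^*$, so that all other components of $\partial\Omega^*$ inside $D$ form a countable set $A^*$ of point components. The goal is to construct a homeomorphism $F$ of $\widehat{\mathbb C}$ conformal on $\Omega$; by the conformal removability of $\partial\Omega$, $F$ must be M\"obius, and since $F|_\Omega$ will be expressible through $f$, this will force $f$ to be M\"obius---contradicting the hypothesis.

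Since the complementary components of $\Omega^*$ are uniformly fat (disks or points), Lemma \ref{lemma:countably many components} gives that only finitely many have diameter exceeding any fixed $\varepsilon>0$; together with $\partial\Omega$ consisting only of point components, an application of Lemma \ref{lemma:convergenceCluster} shows that $f^{-1}$ has singleton cluster sets at every point of $\partial\Omega^*$. Consequently $f^{-1}$ extends continuously from $\Omega^*$ to $\overline{\Omega^*}$, collapsing each circle $C^*_j$ to a single point of $\partial\Omega$ and mapping each point component to its corresponding point of $\partial\Omega$.

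The heart of the argument is to use the hypothesis at $w_0$ to extend $f$, across $z_0$ and beyond, to a homeomorphism $F$ of $\widehat{\mathbb C}$ conformal on $\Omega$. The motivation comes from the fact---already used in the discussion preceding the theorem---that when $\partial\Omega^*$ contains no circle at all, $f$ extends to a homeomorphism of $\widehat{\mathbb C}$ by Lemma \ref{lemma:extension} and must then be M\"obius by conformal removability of $\partial\Omega$. In the present situation, the only obstruction to invoking Lemma \ref{lemma:extension} inside $D$ is the circle $C^*$, whose inverse image under $f^*$ is the single point $z_0$ that fails to admit a continuous extension with singleton cluster value. I would attempt to remedy this by inserting a reflection $R^*$ across $C^*$: locally inside $D$, one glues $\Omega^*\cap D$ to its reflected copy $R^*(\Omega^*\cap D)\subset B^*\cap D$ across the arc $C^*\cap D$, and pulls the reflection back through $f^{-1}$ to an extension on the other side of $z_0$. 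Outside $D$, Lemma \ref{lemma:extension} applies directly to the totally disconnected boundary pieces. The non-accumulation hypothesis at $w_0$ is crucial because it ensures that inside $D$ the only non-point obstruction is the single arc $C^*\cap D$, so that the reflective gluing is topologically unobstructed.

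Once such an $F$ is constructed, the conformal removability of $\partial\Omega$ forces $F$ to be a M\"obius transformation; since $F|_\Omega$ coincides with $f$ (up to post-composition with a M\"obius transformation), the map $f$ is itself M\"obius, yielding the desired contradiction. The main obstacle is of course this last construction: producing a genuine global homeomorphism by patching the local reflective extension near $w_0$ with the homeomorphic extensions obtained from Lemma \ref{lemma:extension} elsewhere is delicate, and requires controlling the diameters of the circles $C^*_j$ through Proposition \ref{disks:converge to 0} and Lemma \ref{lemma:countably many components} so that the global gluing is consistent and avoids introducing spurious non-conformal behavior inside $\Omega$.
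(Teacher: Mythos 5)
Your overall strategy has a fundamental obstruction that the paper avoids by working more locally.

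The global homeomorphism $F$ you propose cannot exist. You want $F\colon\widehat{\C}\to\widehat{\C}$ to be a homeomorphism, conformal on $\Omega$, with $F|_\Omega$ coinciding with $f$ up to post-composition by a M\"obius map. But then $F$ would have to send the single point $z_0$ (the point boundary component corresponding to $C^*$) to a single point $F(z_0)$, while $F(\Omega)$ near $z_0$ is a M\"obius image of $\Omega^*$ near $C^*$, which accumulates on an entire circle. A homeomorphism cannot blow up a point to a circle, and the reflective gluing does not repair this: reflection across $C^*$ acts on the range side, but on the domain side $z_0$ is a single point with no interior across which to reflect, and $\widehat{\C}\setminus\Omega$ near $z_0$ is just $\{z_0\}$ --- there is nothing there to receive the reflected copy of $\Omega^*\cap D$. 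In short, the ``topological mismatch'' is not a matter of delicate patching; it rules out the existence of $F$ in the form you describe.

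The paper's proof sidesteps this entirely by working with $f^{-1}$ on a partial neighborhood, never attempting a global extension. Keeping your notation: inside the disk $V$ around $w_0$, every boundary component other than $C^*$ is a point $b^*$. Around each such $b^*$ one finds a Jordan curve $f(\gamma)\subset V\cap\Omega^*$ (image under $f$ of a small Zoretti curve $\gamma$ around the corresponding point $b\in\partial\Omega$) bounding a Jordan region $V_1\ni b^*$. The sets $U_1\cap\partial\Omega$ and $V_1\cap\partial\Omega^*$ are compact and totally disconnected, so Lemma~\ref{lemma:extension} gives a homeomorphic extension of $f$ from $U_1$ onto $V_1$; removability of $\partial\Omega$ (applied to the compact subset $U_1\cap\partial\Omega$, via the remark after Definition~\ref{def1}) then makes this extension conformal on $U_1$. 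Doing this around every point component in $V$ yields a conformal extension of $f^{-1}$ to all of $V\setminus D^*$, where $D^*$ is the closed disk bounded by $C^*$. But this extended conformal map has the constant boundary value $z_0$ along the arc $C^*\cap V$, which is impossible for a non-constant analytic function (by the Schwarz reflection principle, say). That local contradiction is the correct endgame; the conclusion is not that some global $F$ is M\"obius, but that the blow-up of $z_0$ to $C^*$ is conformally impossible in the presence of isolated point components filling $V$.
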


A circle domain $\Omega^{*}$ with this property is called a \textit{Sierpi\'nski-type circle domain}. See Figure \ref{fig:sierpinski}. The proof of Theorem \ref{CounterExample} is closely related to the notion of local removability.

\begin{figure}[h!t!b]
\label{fig1}
\begin{center}
\includegraphics[width=6cm, height=6cm]{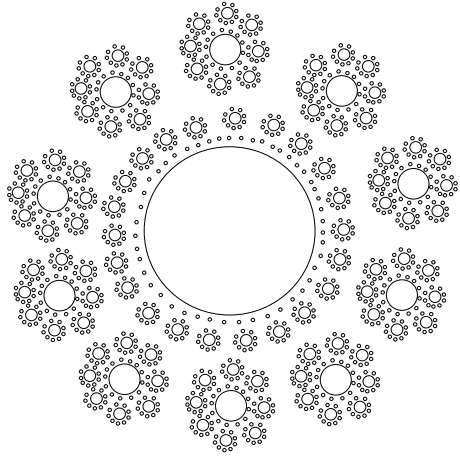}
  \caption{A Sierpi\'nski-type circle domain.}\label{fig:sierpinski}
\end{center}
\end{figure}

\begin{definition}
\label{def1}
A compact set $E \subset \mathbb{C}$ is \textit{locally conformally removable} if for any open set $U$, every homeomorphism on $U$ that is conformal on $U \setminus E$ is actually conformal on the whole open set $U$.
\end{definition}

Note that if $E$ is conformally removable, then for any open set $U$ with $E \subset U$, every homeomorphism on $U$ that is conformal on $U \setminus E$ is actually conformal everywhere on $U$ (see \cite[Proposition 11]{YOU2}). The main difference here is that $E$ is not assumed to be contained in $U$; in particular, the set $E$ may intersect $\partial U$. Clearly, local removability implies removability. Whether the converse holds remains unknown. We state this as a conjecture; see also \cite[Question 4]{Bishop}.

\begin{conjecture}
\label{LocalRemovability}
A compact set is conformally removable if and only if it is locally conformally removable.
\end{conjecture}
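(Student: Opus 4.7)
I would argue by contradiction. Suppose there exists $w_0 \in \partial \Omega^{*}$ lying on a circle component $C\subset \partial \Omega^{*}$ such that $w_0$ is not the accumulation point of distinct circles of $\partial \Omega^{*}$. Choose $r>0$ so that no circle of $\partial \Omega^{*}$ other than $C$ intersects $B(w_0,r)$, and let $D$ denote the open disk bounded by $C$. Write $z_0 := f^{*}(C) \in \partial \Omega$; since $\Omega$ has only point boundary components, $z_0$ is a single point.

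The first step is to establish that $f^{-1}$ extends continuously to $\overline{\Omega^{*}}\cup C$ with the constant value $z_0$ on $C$. I would apply Proposition \ref{proposition:topological2} to $f^{-1}\colon \Omega^{*}\to \Omega$: its hypotheses hold because $\Omega^{*}$ is a circle domain and, since the complementary disks have finite total area, only finitely many of them have diameter exceeding any given $\varepsilon>0$. For each $w \in C$ the cluster set $\clu(f^{-1},w)\subset \partial \Omega$ is then a continuum; since $\partial \Omega$ is totally disconnected it is a single point, and the identity $\clu(f^{-1},C) = f^{*}(C) = \{z_0\}$ pins this point down as $z_0$.

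The second step is to manufacture from $f^{-1}$ a homeomorphism $G\colon \widehat{\C}\to\widehat{\C}$ that is conformal off $\partial \Omega$. I would first extend the continuous map $\overline{f^{-1}}\colon \overline{\Omega^{*}}\to\overline{\Omega}=\widehat{\C}$ to all of $\widehat{\C}$ by declaring it equal to $f^{*}(\gamma_j^{*})$ on each closed complementary disk $\overline{D_j}$ of $\Omega^{*}$; this produces a continuous surjection of $\widehat{\C}$ onto $\widehat{\C}$ which collapses each $\overline{D_j}$ to a single point. Then I would invoke Moore's decomposition theorem (Theorem \ref{Decomposition}) applied to the closed disks $\{\overline{D_j}\}$ on the source side to factor this surjection through a continuous bijection $G\colon \widehat{\C}\to\widehat{\C}$. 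The local isolation of $C$ at $w_0$ enters essentially at this stage: it should allow the Moore collapse near the specific disk $D$ to be arranged so that the resulting quotient map $G$ is honestly conformal on the image of $\Omega^{*}$, and hence $G^{-1}$ is conformal on $\Omega = \widehat{\C}\setminus\partial\Omega$.

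The final step applies the conformal removability of $\partial \Omega$: it forces $G^{-1}$, and therefore $G$, to be a M\"obius transformation, which in turn makes $f$ M\"obius and contradicts the hypothesis. The main obstacle lies in the second step: Moore's theorem by itself yields only a topological homeomorphism, so to upgrade to conformality one must exploit the isolation of $w_0$ in a crucial way. The expected mechanism is the degenerate Schwarz reflection principle --- a holomorphic function with constant boundary values on an analytic arc must be constant --- which rigidifies the local structure of $f^{-1}$ near the isolated arc of $C$, and removes the freedom that would otherwise prevent the topological Moore collapse near $D$ from being upgraded to a conformal rearrangement.
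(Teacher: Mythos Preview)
The statement you were asked to address is Conjecture~\ref{LocalRemovability}, which the paper explicitly presents as an \emph{open problem}: ``Whether the converse holds remains unknown. We state this as a conjecture.'' There is no proof in the paper to compare against, because none is claimed.

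More importantly, your proposal does not address Conjecture~\ref{LocalRemovability} at all. The conjecture is a purely set-theoretic statement about compact sets $E\subset\widehat{\C}$: it asserts that conformal removability is equivalent to local conformal removability in the sense of Definition~\ref{def1}. Your argument, by contrast, introduces circle domains $\Omega$, $\Omega^*$, a conformal map $f$, a boundary circle $C$, and proceeds to argue that $\Omega^*$ must be a Sierpi\'nski-type circle domain. That is the content of Theorem~\ref{CounterExample}, not of the conjecture. You appear to have targeted the wrong statement.

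If your intent was indeed Theorem~\ref{CounterExample}, then your outline is in the right spirit but takes an unnecessarily global and fragile route. The paper's proof is more local and direct: given the isolated boundary arc near $w_0$, it uses Zoretti's theorem and Lemma~\ref{lemma:extension} to show that $f^{-1}$ extends conformally across every point component of $\partial\Omega^*$ lying in $V\setminus D^*$ (invoking removability of compact subsets of $\partial\Omega$), so that $f^{-1}$ becomes conformal on $V\setminus D^*$ with constant boundary value $z_0$ along the arc $\gamma^*\cap V$; the Schwarz reflection principle then gives an immediate contradiction. Your second step --- building a global homeomorphism $G$ of $\widehat{\C}$ via Moore's theorem and then arguing it is conformal off $\partial\Omega$ --- has a genuine gap: Moore's decomposition yields only a topological quotient, and you give no mechanism to make $G$ conformal on the image of $\Omega^*$. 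The ``local isolation of $C$'' cannot help here, since the other complementary disks of $\Omega^*$ need not be isolated at all.
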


Conjecture \ref{LocalRemovability}, if true, would imply that the union of two conformally removable sets is also conformally removable, which is an open problem. In \cite{YOU2}, the conjecture was shown to hold for various sets, such as quasiarcs and totally disconnected compact sets.

We now proceed with the proof of Theorem \ref{CounterExample}.

\begin{proof}
Let $f\colon \Omega \to \Omega^*$ be a non-M\"{o}bius conformal map between two circle domains with $f(\infty)=\infty \in \Omega^*$, where $\partial \Omega$ is a totally disconnected removable compact set.

Suppose for a contradiction that there exists a complex number $w_0 \in \partial \Omega^*$ belonging to a boundary circle $\gamma^*$ and an open set $V$ containing $w_0$ that is disjoint from all the other circles in $\partial \Omega^*$. Shrinking $V$ if necessary, we can assume that $V$ is a disk centered at $w_0$ sufficiently small so that it does not contain $\gamma^*$. Also, let $z_0 \in \partial \Omega$ be the point corresponding to $\gamma^*$ under $f$, i.e., $f^*(\{z_0\})=\gamma^*$; see Proposition \ref{proposition:topological}.

Now, let $b^*$ be a component of $ \partial \Omega^*$ contained in $V$. Note that $b^*$ is a single point, and denote by $b$ the point component of $\partial \Omega$ corresponding to $b^*$ under $f$, i.e., $f^*(b)=b^*$. By (PT1) of Section \ref{sec3}, for each $\varepsilon>0$, we can find a Jordan curve $\gamma$ in $B(b,\varepsilon) \cap \Omega$ whose bounded complementary component $U_1$ contains $b$. Then $f(\gamma)$ is a Jordan curve in $\Omega^*$ whose bounded complementary component $V_1$ contains $b^*$, by Lemma \ref{lemma:surround}. Moreover, the curve $f(\gamma)$ is contained in $V$ provided $\varepsilon$ is small enough. Now, note that the sets $E\coloneqq U_1 \cap \partial \Omega$ and $F\coloneqq  V_1 \cap \partial \Omega^*$ are compact and totally disconnected, and that $f\colon U_1 \setminus E \to V_1 \setminus F$ is conformal. It follows that $f$ extends to a homeomorphism of $U_1$ onto $V_1$; see Lemma \ref{lemma:extension}. But $E$ is removable, as a compact subset of the removable set $\partial \Omega$. We thus obtain from the remark after Definition \ref{def1} that $f$ extends to be conformal on $U_1$, a neighborhood of $b$.

Summarizing, we have proved that $f^{-1}$ extends to be conformal in a neighborhood of every point $b^* \in V \cap \partial \Omega^*$. It follows that the map $f^{-1}$ has a conformal extension to the open set $V \setminus D^*$, where $D^*$ is the closed disk bounded by the circle $\gamma^*$. Moreover, we have that $f^{-1}(w_n)$ converges to $z_0$ whenever $w_n$ is a sequence in $V \setminus D^*$ accumulating at the circular arc $\gamma^* \cap V$. This is clearly impossible, by the Schwarz reflection principle for example, and we get a contradiction.
\end{proof}

Theorem \ref{CounterExample} shows the importance of studying the rigidity of Sierpi\'nski-type circle domains. It is worth mentioning that such domains also appear naturally in another conjecture by He and Schramm on the removability of the boundaries of circle domains.

\begin{conjecture}[Removability conjecture \cite{SCH2}]
\label{RemovabilityConjecture}
Let $\Omega$ be a circle domain. If every Cantor set contained in $\partial \Omega$ is conformally removable, then $\partial \Omega$ is removable.
\end{conjecture}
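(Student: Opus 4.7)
The plan is to argue by contradiction. Suppose some $w_0 \in \partial \Omega^*$ lies on a boundary circle $\gamma^*$ of $\Omega^*$ and admits a neighborhood $V$ (a small open disk centered at $w_0$, small enough not to contain $\gamma^*$) disjoint from every boundary circle of $\Omega^*$ other than $\gamma^*$; so every component of $\partial \Omega^*$ meeting $V$ other than the arc $\gamma^* \cap V$ is a single point. Under the boundary correspondence $f^*$ from Proposition \ref{proposition:topological}, the circle $\gamma^*$ must come from a single point $z_0 \in \partial \Omega$, since $\partial \Omega$ has only point components. The goal is to use removability to extend $f^{-1}$ conformally across all point boundary components of $\Omega^*$ lying in $V$, thus obtaining a conformal map on $V \setminus \overline{D^*}$ (where $D^*$ is the closed disk bounded by $\gamma^*$), and then derive a contradiction from its boundary behavior on the arc $\gamma^* \cap V$.

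For each point component $b^*$ of $\partial \Omega^*$ in $V$, let $b = (f^*)^{-1}(b^*) \in \partial \Omega$. Using Zoretti's theorem (PT1), I would enclose $b$ in a small Jordan curve $\gamma \subset \Omega$ whose bounded complementary component $U_1$ contains $b$. By Lemma \ref{lemma:surround}, the image $f(\gamma)$ is a Jordan curve whose bounded complementary component $V_1$ contains $b^*$; since $b^* \neq w_0$ (as $b^*$ is a point component, disjoint from the circle $\gamma^*$), by shrinking $\gamma$ I can ensure $f(\gamma) \subset V$ and moreover $V_1 \cap \gamma^* = \emptyset$. Then $E \coloneqq U_1 \cap \partial \Omega$ and $F \coloneqq V_1 \cap \partial \Omega^*$ are both compact and totally disconnected, so Lemma \ref{lemma:extension} upgrades $f \colon U_1 \setminus E \to V_1 \setminus F$ to a homeomorphism of $U_1$ onto $V_1$. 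Since $E$ is a compact subset of the removable set $\partial \Omega$, it is itself removable; the strengthening of removability recorded after Definition \ref{def1} then makes the extended $f$ conformal on all of $U_1$, in particular in a neighborhood of $b$, and hence $f^{-1}$ is conformal in a neighborhood of $b^*$.

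Next, I would patch these local conformal extensions of $f^{-1}$ together with the original $f^{-1}$ on $V \cap \Omega^*$ to obtain a single conformal map on $V \setminus \overline{D^*}$. Coherence is automatic since any two overlapping extensions agree with the original conformal map on a non-empty open set. The contradiction then comes from the boundary behavior on the arc $\gamma^* \cap V$: any sequence $w_n \in V \setminus \overline{D^*}$ accumulating at some $w \in \gamma^* \cap V$ has $f^{-1}(w_n) \to z_0$, because $f^*(\{z_0\}) = \gamma^*$ forces the preimage sequence to accumulate at $z_0$. Thus the extended $f^{-1}$ has constant cluster value $z_0$ along the analytic arc $\gamma^* \cap V$; by Schwarz reflection across $\gamma^*$ applied to $f^{-1} - z_0$, this forces $f^{-1} \equiv z_0$ on a neighborhood of the arc, contradicting injectivity of $f^{-1}$ on $V \cap \Omega^*$.

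The main obstacle I anticipate is not conceptual but the set-theoretic bookkeeping in the patching step, in particular ensuring that for each $b^*$ the Jordan region $V_1$ around $b^*$ can be chosen disjoint from $\gamma^*$, so that $F = V_1 \cap \partial \Omega^*$ is genuinely totally disconnected and Lemma \ref{lemma:extension} applies. The conceptual heart of the argument is the single observation that a removable, totally disconnected piece of $\partial \Omega$ permits one to promote the topological extension of $f$ across it (Lemma \ref{lemma:extension}) to a conformal extension, a step that turns isolated point boundary components of $\Omega^*$ into inessential punctures for $f^{-1}$; once that local promotion is in hand, the isolated circle $\gamma^*$ is trapped between a conformal map and a single point, which is incompatible with holomorphicity.
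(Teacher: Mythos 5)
Your proposal does not address the statement you were assigned. Conjecture \ref{RemovabilityConjecture} is the He--Schramm Removability Conjecture, which the paper states \emph{as a conjecture}: it remains open and the paper does not prove it, so there is no proof to compare against. Notice also that your argument never invokes the hypothesis of the conjecture (that every Cantor set in $\partial\Omega$ is removable), and it introduces objects --- a conformal map $f\colon\Omega\to\Omega^*$, a point $w_0$, a circle $\gamma^*$ --- that do not appear anywhere in the conjecture's statement.

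What you have actually written is a proof of Theorem \ref{CounterExample}, a different result in the same section, and as such it matches the paper's proof essentially step for step: the same contradiction hypothesis (a circle $\gamma^*$ through $w_0$ isolated from all other circles in a small disk $V$), the same use of Zoretti's theorem (PT1), Lemma \ref{lemma:surround}, and Lemma \ref{lemma:extension} to produce a homeomorphic extension of $f$ over a small Jordan region $U_1$ around each point component $b$, the same appeal to removability of compact subsets of $\partial\Omega$ via the remark after Definition \ref{def1} to promote that homeomorphism to a conformal one, and the same Schwarz-reflection contradiction along $\gamma^*\cap V$. Your unpacking of the final step (applying reflection to $f^{-1}-z_0$ to force constancy and contradict injectivity) fills in the paper's terse closing sentence but is not a different route. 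If you were assigned the conjecture deliberately, the correct response is to report that it is open and that the paper offers only partial evidence (e.g.\ the discussion around Sierpi\'nski-type domains and the result of \cite{YOU2} mentioned there), not a proof.
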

The related question whether a non-removable curve contains a non-removable Cantor set  was asked by Bishop in \cite[Question 4]{Bishop}.

In \cite{YOU2}, Conjecture \ref{RemovabilityConjecture} was shown to hold whenever the set of accumulation points of circles is not too large, in some precise sense. Sierpi\'nski-type circle domains are therefore good candidates for a counterexample.

Finally, we conclude by mentioning that it is possible to construct a Sierpi\'nski-type circle domain $\Omega$ such that
\begin{enumerate}
\item $\partial \Omega$ does not have $\sigma$-finite length,
\item $\Omega$ is a John domain.
\end{enumerate}
In particular, this gives examples of rigid circle domains (by Theorem \ref{mainthm1}) for which the rigidity result from \cite{SCH2} does not apply.

\bibliographystyle{amsplain}

\end{document}